\documentclass{amsart}
\usepackage{amssymb}
\usepackage{enumitem}
\setlist{itemsep=4pt, topsep=0pt, leftmargin=17pt}

\usepackage{mathtools}
\usepackage{subcaption}

\makeatletter
\renewcommand{\p@subfigure}{}
\makeatother
\usepackage{placeins}
\usepackage{xcolor}
\definecolor{PKU}{cmyk}{0, 1, 1, .45}
\definecolor{BIT}{cmyk}{1, 0, 1, 0}
\usepackage[colorlinks, citecolor=BIT, linkcolor=PKU]{hyperref}
\usepackage[capitalize]{cleveref}
\AddToHook{env/conjecture/begin}{\crefalias{theorem}{conjecture}}
\AddToHook{env/lemma/begin}{\crefalias{theorem}{lemma}}
\AddToHook{env/corollary/begin}{\crefalias{theorem}{corollary}}
\AddToHook{env/proposition/begin}{\crefalias{theorem}{proposition}}

\newtheorem{theorem}{Theorem}[section]
\newtheorem{conjecture}[theorem]{Conjecture}
\newtheorem{lemma}[theorem]{Lemma}
\newtheorem{corollary}[theorem]{Corollary}
\newtheorem{proposition}[theorem]{Proposition}
\numberwithin{equation}{section}

\allowdisplaybreaks

\usepackage[numbers, sort&compress, nonamebreak, merge, elide, longnamesfirst]{natbib}

\crefname{figure}{Figure}{Figures}
\Crefname{figure}{Figure}{Figures}

\usepackage{tikz}
\usetikzlibrary{decorations.pathreplacing, calc, positioning, arrows.meta}
\tikzset{
vertex/.style={shape=circle, minimum size=1mm, fill=black, inner sep=0pt},
edge/.style={black, very thick}
}

\theoremstyle{definition}

\theoremstyle{remark}

\theoremstyle{plain}

\newcommand{\Mcal}{\mathcal M}
\newcommand{\avm}{\operatorname{avm}}
\newcommand{\pc}{\operatorname{pm}}
\newcommand{\defi}{\operatorname{def}}
\newcommand{\Aclass}{\mathcal A}
\newcommand{\Bclass}{\mathcal B}
\newcommand{\Cclass}{\mathcal C}
\newcommand{\Th}{\Theta}
\newcommand{\Dumb}{\mathsf D}

\newcommand{\AU}{\mathcal U}
\newcommand{\AV}{\mathcal V}
\newcommand{\AW}{\mathcal W}
\newcommand{\BO}{\mathcal O^{B}}
\newcommand{\BE}{\mathcal E^{B}}
\newcommand{\CO}{\mathcal O^{C}}
\newcommand{\CP}{\mathcal P^{C}}
\newcommand{\CQ}{\mathcal Q^{C}}
\newcommand{\CR}{\mathcal R^{C}}

\author[K. Zhang]{K. Zhang}
\address[K. Zhang]{School of Mathematics and Statistics, Beijing Institute of Technology, Beijing 102400, P.\ R.\ China.}
\email{kai@bit.edu.cn}

\keywords{Maximal matchings,
average size of maximal matchings,
bicyclic graphs}
\subjclass[2020]{05C70, 05C35.}

\title{Upper bounds for the average size of maximal matchings in bicyclic graphs}

\begin{document}

\begin{abstract}
For a graph $G$, let avm($G$) denote the average size of its maximal matchings. Engbers and Erey initiated the extremal study of this parameter and asked for extensions from trees and unicyclic graphs to $k$-cyclic graphs. In this paper, we determine the maximum value of avm($G$) over all connected bicyclic graphs with $n$ vertices and $n+1$ edges. If $n\ge 5$ is odd, then
\[
\text{avm}(G)\le \frac{n-1}{2},
\]
and we characterize all graphs attaining equality. For $n=6$, the maximum value is 13/5, attained uniquely by $\Theta(1,3,3)$. If $n\ge 8$ is even, then
\[
\text{avm}(G)\le \frac{n}{2}-1+\frac{2}{n-4}.
\]
Equality holds precisely for the graph obtained from two copies of $C_4$ joined by an edge by attaching $(n-8)/2$ pendant 2-paths to one endpoint of the joining edge, and, when $n\ge 10$, for the graph obtained from two copies of $C_4$ joined by a path of length 2 by attaching one leaf and $(n-10)/2$ pendant 2-paths to the internal vertex of the joining path. The proofs combine structural characterizations of odd-order extremal graphs with counting and switching arguments based on perfect matchings.
\end{abstract}

\maketitle
\tableofcontents

\section{Introduction}\label{sec:intro}

Throughout this paper, all graphs are finite, simple, and undirected. For a graph $G$, its vertex set and edge set are denoted by $V(G)$ and $E(G)$, respectively; its order is $|V(G)|$ and its size is $|E(G)|$. A connected graph of order $n$ is called $c$-cyclic if it has $n-1+c$ edges. Thus trees, unicyclic graphs, and bicyclic graphs correspond to $c=0,1$, and $2$, respectively.

A \emph{matching} of $G$ is a set of pairwise nonincident edges. A matching $M$ \emph{covers} a vertex if that vertex is incident with an edge of $M$. It is \emph{maximal} if it is not properly contained in another matching, \emph{maximum} if it has largest possible size, and \emph{perfect} if it covers every vertex. The size of a maximum matching is the matching number $\nu(G)$, while the minimum size of a maximal matching is the saturation number $s(G)$. Hence every maximal matching has size between $s(G)$ and $\nu(G)$. Maximal matchings are precisely the independent edge-dominating sets: their edges are pairwise nonincident, and every edge outside the matching is incident with an edge of the matching.

Average graph parameters refine extremal parameters by recording the distribution of all feasible configurations rather than only the smallest or largest one. For ordinary matchings, Andriantiana, Misanantenaina, and Wagner~\cite{AndriantianaMatchings} studied the average size and determined extremal trees. Engbers and Erey~\cite{EngbersErey2023} initiated the corresponding extremal study for maximal matchings. Let $\Mcal(G)$ be the family of all maximal matchings of $G$, and let $m_i(G)$ be the number of its maximal matchings of size $i$. Set
\[
 m(G)=|\Mcal(G)|=\sum_i m_i(G)
 \quad\text{and}\quad
 m'(G)=\sum_{M\in\Mcal(G)}|M|=\sum_i i\,m_i(G).
\]
The \emph{average size of a maximal matching} of $G$ is
\[
 \avm(G)=\frac{1}{|\Mcal(G)|}\sum_{M\in\Mcal(G)}|M|
        =\frac{m'(G)}{m(G)}.
\]
Equivalently, $\avm(G)$ is the expected size of a maximal matching chosen uniformly from $\Mcal(G)$. Although
\[
 s(G)\le \avm(G)\le \nu(G),
\]
the value of $\avm(G)$ is not determined by the two endpoint parameters: it depends on the entire size distribution $(m_i(G))_i$. This makes its extremal behavior simultaneously structural and enumerative.

The basic extremal problem posed by Engbers and Erey can be stated as follows.

\medskip
\noindent\textbf{Question 1.} Which graphs $G$ have the maximum or minimum value of $\avm(G)$ when $G$ is restricted to a particular family?
\medskip

Engbers and Erey~\cite{EngbersErey2023} answered this question for trees, connected unicyclic graphs, trees of fixed order and diameter at most $5$, and $2$-regular graphs. Their work also led to more refined problems. For example, they proposed the following conjecture for the minimum among trees of fixed order and diameter.

\begin{conjecture}\label{conj:diameter}
Let $T$ be a tree of order $n$ with diameter $d$, where $d\neq4$ and $d\ge3$. Then $\avm(T)$ is uniquely minimized by the tree obtained from a path of length $d-1$ by attaching $n-d$ leaves to one endpoint of the path.
\end{conjecture}

Sui and Li~\cite{Sui2025} subsequently verified \cref{conj:diameter} for $d=6,7$, and $8$. In a different direction, Engbers and Erey explicitly asked for extensions beyond the unicyclic case.

\medskip
\noindent\textbf{Question 2.} Extend their results on $\avm(G)$ from unicyclic graphs to $k$-cyclic graphs with $k\ge 2$.
\medskip

The first case not covered by the tree and unicyclic results is therefore the family of connected bicyclic graphs. In our companion paper~\cite{ZhangLower}, we determined the minimum of $\avm(G)$ over this family. The purpose of the present paper is to complete the two-sided extremal picture by determining the maximum and characterizing all graphs attaining the maximum. The upper-bound problem has a different character from the lower-bound problem. When $n$ is odd, the numerical inequality $\avm(G)\le(n-1)/2$ is immediate, but equality requires a complete classification of the bicyclic graphs in which every maximal matching is maximum. When $n$ is even, the average can exceed $n/2-1$ only through perfect matchings; the proof must therefore control both the number of perfect matchings and the maximal matchings of size $n/2-1$ produced from them.

Let $\mathcal G(n,n+1)$ denote the family of connected simple graphs with $n$ vertices and $n+1$ edges. Following the classification and notation used in~\cite{Deng2008,ZhangLower}, every graph in $\mathcal G(n,n+1)$ belongs to one of three classes:
\begin{itemize}
\item $\Aclass(p,q,l)$, when the two cycles $C_p$ and $C_q$ share a path of length $l$;
\item $\Bclass(p,q)$, when the two cycles have exactly one common vertex;
\item $\Cclass(p,q)$, when the two cycles are vertex-disjoint and are joined by a path.
\end{itemize}
For a graph in $\Aclass(p,q,l)$ or $\Bclass(p,q)$, the \emph{core} is the subgraph induced by the vertices on the two cycles. For a graph in $\Cclass(p,q)$, the core consists of the two cycles together with the path joining them. The three core types are shown in \cref{core}. Every connected bicyclic graph is obtained from one of these cores by attaching rooted trees to core vertices.

\begin{figure}
\centering
\subcaptionbox{The core of $\mathcal{A}(p, q, l)$\label{apql}}[0.28\textwidth]{
\begin{tikzpicture}[scale=0.85]
\node[vertex] (u) at (-1.5,0) {};
\node[vertex] (q) at (-1.0,0) {};
\node[vertex] (w) at (1.0,0) {};
\node[vertex] (v) at (1.5,0) {};
\draw[edge] (u) -- (q);
\draw[edge] (w) -- (v);
\node[vertex] (x) at (-0.9,0.7) {};
\node[vertex] (y) at (0.9,0.7) {};
\node[vertex] (e) at (0.9,-0.7) {};
\node[vertex] (r) at (-0.9,-0.7) {};
\draw[edge] (u) -- (x);
\draw[edge] (u) -- (r);
\draw[edge] (v) -- (e);
\draw[edge] (v) -- (y);
\draw[thick] (-0.9,0.7)--(-0.5,0.7);
\draw[thick] (0.9,0.7)--(0.5,0.7);
\draw[thick] (-1,0)--(-0.6,0);
\draw[thick] (1,0)--(0.6,0);
\draw[thick] (-0.9,-0.7)--(-0.5,-0.7);
\draw[thick] (0.9,-0.7)--(0.5,-0.7);
\node at (0,0) {$\cdots$};
\node at (0,0.7) {$\cdots$};
\node at (0,-0.8) {$\cdots$};
\end{tikzpicture}
}
\hspace{1em}
\subcaptionbox{The core of $\mathcal{B}(p, q)$ \label{bpq}}[0.28\textwidth]{
\begin{tikzpicture}[scale=0.85]
\node[vertex] (c) at (0,0) {};
\node[vertex] (l1) at (-1.3, -1) {};
\node[vertex] (l2) at (1.3, -1) {};
\node[vertex] (l3) at (-0.8, -1) {};
\node[vertex] (l4) at (0.8, -1) {};
\draw[thick] (-0.8, -1) -- (-0.4, -1);
\draw[thick] (0.8, -1) -- (0.4, -1);
\draw[edge] (c) -- (l1) -- (l3);
\draw[edge] (c) -- (l2) -- (l4);
\node[vertex] (r1) at (-1.3, 1) {};
\node[vertex] (r2) at (1.3, 1) {};
\node[vertex] (r3) at (-0.8, 1) {};
\node[vertex] (r4) at (0.8, 1) {};
\draw[thick] (-0.8, 1) -- (-0.4, 1);
\draw[thick] (0.8, 1) -- (0.4, 1);
\draw[edge] (c) -- (r1) -- (r3);
\draw[edge] (c) -- (r2) -- (r4);
\node at (0,1) {$\cdots$};
\node at (0,-1) {$\cdots$};
\end{tikzpicture}
}
\hspace{1em}
\subcaptionbox{The core of $\mathcal{C}(p, q)$\label{cpq}}[0.28\textwidth]{
\begin{tikzpicture}[scale=0.85]
\node[vertex] (v3) at (-0.7,0)  {};
\node[vertex] (v6) at (-0.2,0)  {};
\node[vertex] (v7) at (1.2,0)  {};
\draw[thick] (v3)--(0.1,0);
\node[vertex] (v1) at (-1.5, 1)  {};
\node[vertex] (v2) at (-1.5, -1) {};
\node[vertex] (v4) at (-1.5, 0.6)  {};
\node[vertex] (v5) at (-1.5, -0.6) {};
\draw[thick] (-1.5, 0.6)--(-1.5, 0.3);
\draw[thick] (-1.5, -0.6)--(-1.5, -0.3);
\node[vertex] (w1) at (1.7,0)  {};
\node[vertex] (w2) at (2.5, 1) {};
\node[vertex] (w3) at (2.5, -1) {};
\node[vertex] (w4) at (2.5, 0.6) {};
\node[vertex] (w5) at (2.5, -0.6){};
\draw[thick] (2.5, 0.6)--(2.5, 0.3);
\draw[thick] (2.5, -0.6)--(2.5, -0.3);
\draw[thick] (0.9,0)--(w1);
\draw[edge] (v3) -- (v1) -- (v4);
\draw[edge] (v3) -- (v2) -- (v5);
\draw[edge] (w1) -- (w2) -- (w4);
\draw[edge] (w1) -- (w3) -- (w5);
\node at (0.5,0) {$\cdots$};
\node at (-1.5,0.1) {$\vdots$};
\node at (2.5,0.1) {$\vdots$};
\end{tikzpicture}
}
\caption{The three types of bicyclic cores.}\label{core}
\end{figure}

This tripartite decomposition is essential because the three cores support different numbers of perfect matchings. A graph with a theta core has at most three perfect matchings, a graph whose cycles meet in one vertex has at most two, and a graph with two disjoint cycles has at most four. Consequently, the extremal values and equality structures must first be determined separately in the three classes and then compared.

Our main result is the following.

\begin{theorem}\label{thm:global}
Let $G\in\mathcal G(n,n+1)$, where $n\ge5$.
\begin{enumerate}[label=\textup{(\roman*)}]
\item If $n$ is odd, then
\[
 \avm(G)\le\frac{n-1}{2}.
\]
The graphs attaining equality are exactly those described in \cref{thm:A-odd,thm:B-odd,thm:C-odd}.

\item If $n=6$, then
\[
 \avm(G)\le\frac{13}{5},
\]
with equality if and only if $G\cong\Th(1,3,3)$.

\item If $n\ge8$ is even, then
\[
 \avm(G)\le\frac n2-1+\frac{2}{n-4}.
\]
Equality holds precisely for the following graphs: two copies of $C_4$ joined by one edge, with $(n-8)/2$ pendant $2$-paths attached to one endpoint of the joining edge; or, when $n\ge10$, two copies of $C_4$ joined by a path of length $2$, with one leaf and $(n-10)/2$ pendant $2$-paths attached to the internal vertex of the joining path.
\end{enumerate}
\end{theorem}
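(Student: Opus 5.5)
Theorem~\ref{thm:global} is assembled from separate results for the three classes $\Aclass$, $\Bclass$, $\Cclass$, followed by a comparison of the class-wise maxima.

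\textbf{Odd $n$.} Since $\nu(G)\le (n-1)/2$, every maximal matching has size at most $(n-1)/2$, so $\avm(G)\le (n-1)/2$. Equality holds if and only if every maximal matching is maximum of size $(n-1)/2$, that is, $G$ is equimatchable with a near-perfect matching. The classification is carried out core by core; this is the content of \cref{thm:A-odd,thm:B-odd,thm:C-odd}, which part (i) simply invokes.

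\textbf{Even $n$: reduction to counting.} Write $p=m_{n/2}(G)$ for the number of perfect matchings and $r=m_{n/2-1}(G)$. Since every maximal matching other than a perfect one has size at most $n/2-1$,
\[
\avm(G)\le \frac n2-1+\frac{p}{m(G)}\le \frac n2-1+\frac{p}{p+r}.
\]
So the task is to bound $p$ from above and $r$ (or $m(G)-p$) from below in terms of $n$.

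The bound on $p$ is read off from the core: at most $3$ for $\Aclass$, $2$ for $\Bclass$, $4$ for $\Cclass$. Every pendant tree must be matched perfectly in a forced way, so perfect matchings correspond to perfect matchings of a reduced structure on the core.

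For the lower bound on $r$ I would use a switching argument. Take a perfect matching $M$ and an edge $e\in M$ whose endpoints $u,v$ have all other neighbours covered by $M-e$. Then $M-e$ is maximal of size $n/2-1$ (it is maximal because any other edge at $u$ or $v$ meets $M-e$). When a pendant 2-path $x\,y$ hangs from $w$, with $xy\in M$, one can also trade $w$'s matched edge for $wx$ or perform similar local swaps. Each pendant 2-path and each cycle edge thus contributes distinct maximal matchings of size $n/2-1$. The target count is $r\ge p(n-4)/2 - p$, up to the right constant, giving $p/(p+r)\le 2/(n-4)$ once $p=4$. For $\Aclass$ and $\Bclass$, the smaller $p\le 3$ should give a strictly smaller bound for $n\ge 8$ after counting $r\gtrsim n/2$ per perfect matching. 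This is where the class comparison happens.

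For $\Cclass$ with $p=4$, both cycles must be even, and the perfect-matching structure forces the two $C_4$'s (longer even cycles raise $m$ faster than they help). The equality cases are obtained by checking that the size-$(n/2-1)$ maximal matchings are exactly those produced by the switching, with no other maximal matchings, which pins down the joining path of length $1$ or $2$ and the attachment of all pendant 2-paths at a single vertex.

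\textbf{$n=6$.} This finite case is handled by direct enumeration of the bicyclic graphs on 6 vertices with perfect matchings, giving $13/5$ for $\Th(1,3,3)$.

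\textbf{Main obstacle.} The hardest step is the sharp lower bound on $m(G)-p$. Different perfect matchings can produce the same near-perfect maximal matching, and the extra maximal matchings of smaller size must not be discarded when proving strictness. I would first attempt an injection from pairs (perfect matching, removable edge) into $\Mcal(G)$, then induct on pendant trees by deleting a pendant 2-path, under which $p$ is unchanged and $m$ grows by at least a computable amount. This reduces everything to the cores plus finitely many small base graphs.
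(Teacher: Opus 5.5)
Part (i) is handled exactly as in the paper. For even $n$, however, your sketch contains a step that fails, and it leaves out the comparison that makes up the whole of the paper's proof of this theorem.

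\textbf{The switching move is wrong.} If $e=uv\in M$, then $M-e$ leaves the adjacent vertices $u,v$ uncovered. So $e$ can be added back, and $M-e$ is never maximal. The correct device is \cref{lem:switch}. For $xy\notin M$ with $xx',yy'\in M$, the matching $(M\setminus\{xx',yy'\})\cup\{xy\}$ has size $k-1$. It is maximal unless $x'y'\in E(G)$; in that case $xx'y'y$ is an alternating $4$-cycle and one gets a second perfect matching. Add \cref{lem:switch-distance}: two switching families can meet only if their perfect matchings differ on a $4$- or $6$-cycle. Together these turn ``$k+1$ switches per perfect matching'' into a count with an explicit, bounded loss. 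Your pendant-$2$-path trade is a special case of this switch.

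\textbf{The class comparison cannot be done ``up to the right constant''.} We have $U_C(n)-U_A(n)=8/((n-4)(3n-8))$, so the additive constant decides everything.
\begin{itemize}
\item For $G\in\Aclass_{2k}$ with three perfect matchings, $\avm(G)\le k-1+3/m(G)$. Beating $k-1+1/(k-2)$ requires $m(G)>3k-6$, i.e.\ losing fewer than $12$ of the $3(k+1)$ ordered switches.
\item The crude loss bound (four per pair differing on a $4$-cycle, three per pair differing on a $6$-cycle) allows exactly $12$ if all three core cycles were $4$-cycles. So the crude count alone cannot exclude $\Aclass$ graphs from the equality case in (iii).
\item Excluding them needs structure. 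Three perfect matchings force all three theta paths to have odd length, so at most two core cycles are $4$-cycles. The paper's \cref{prop:A-three-all} then gives $m(G)\ge 3k-4$.
\item You likewise need $m(G)\ge 2k-1$ in $\Aclass$ with two perfect matchings (\cref{prop:A-two-pm}), $m(G)\ge 2k$ in $\Bclass$ with two perfect matchings (\cref{lem:B-two-count}), and the cases $\pc(G)\le2$ in $\Cclass$ (\cref{prop:C-less-four}).
\item For the $\Cclass$ equality case, the missing mechanism is this: the edges lying in no perfect matching must pairwise intersect, hence form a star. That forces connector length at most $2$ and all pendant $2$-paths at one vertex (\cref{lem:C-X-star,prop:C-four-equality}).
\end{itemize}

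All of this is \cref{thm:A-even,thm:B-even,thm:C-even}. The fix is to cite them, as you did for odd $n$, and verify $U_C(n)>U_A(n)>U_B(n)$ for $n\ge8$, where $U_A(n)=n/2-1+6/(3n-8)$, $U_B(n)=n/2-1+2/n$ and $U_C(n)=n/2-1+2/(n-4)$. That is precisely the paper's proof.

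For $n=6$, enumeration would work but you have not carried it out. The paper reads off $13/5>7/3>19/9$ from \cref{thm:A-even}, \cref{thm:B-even} and \cref{lem:C-six}. Note that the $\Cclass$ formula does not apply at $n=6$, since $\Cclass_6=\{\Dumb_1(3,3)\}$.
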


The restriction $n\ge5$ agrees with the range in the lower-bound paper. For completeness, the only connected simple bicyclic graph of order $4$ is $K_4-e\cong\Th(1,2,2)$, and $\avm(K_4-e)=5/3$.

The remainder of the paper is organized as follows. Section~\ref{sec:prelim} collects notation and the tools used in more than one core class. Sections~\ref{sec:A}, \ref{sec:B}, and \ref{sec:C} determine the sharp upper bounds and equality cases for $\Aclass(p,q,l)$, $\Bclass(p,q)$, and $\Cclass(p,q)$, respectively. Section~\ref{sec:global} compares the three classwise bounds and completes the proof of \cref{thm:global}.

\section{Preliminaries}\label{sec:prelim}

\subsection{Notation and bicyclic cores}

For a vertex $v$ of a graph $G$, let $N_G(v)$ denote its neighborhood. For $S\subseteq V(G)$, the graph $G-S$ is the subgraph induced by $V(G)\setminus S$. We write $P_t$ and $C_t$ for the path and cycle of order $t$, respectively. A vertex of degree $1$ is a \emph{leaf}; a vertex adjacent to a leaf is a \emph{support vertex}; and an edge incident with a leaf is a \emph{pendant edge}. If a support vertex has degree $2$, its nonpendant incident edge is called its \emph{support edge}. We denote by $\pc(G)$ the number of perfect matchings of $G$.

A graph is \emph{equimatchable} if all its maximal matchings have the same size. In particular, an odd-order graph $G$ satisfies $\avm(G)=(|V(G)|-1)/2$ exactly when every maximal matching is maximum. A graph is \emph{randomly matchable} if every maximal matching is perfect.

We use the following core notation throughout. For positive integers $a,b,c$, with at most one of them equal to $1$, let $\Th(a,b,c)$ be the union of three internally disjoint paths of lengths $a,b,c$ with common endvertices. The notation is invariant under permutations of $a,b,c$. Thus $|V(\Th(a,b,c))|=a+b+c-1$, and the core of a graph in $\Aclass(p,q,l)$ is
\[
 \Th(l,p-l,q-l).
\]
Let $F_{p,q}$ be the union of $C_p$ and $C_q$ with exactly one common vertex; this is the core of a graph in $\Bclass(p,q)$. Finally, let $\Dumb_l(p,q)$ consist of two vertex-disjoint cycles $C_p$ and $C_q$ whose distinguished vertices are joined by a path of length $l$; this is the core of a graph in $\Cclass(p,q)$. We write $\Aclass_n$, $\Bclass_n$, and $\Cclass_n$ for the corresponding subfamilies of $\mathcal G(n,n+1)$.

For the purposes of this paper, a bicyclic graph is called \emph{reduced} if no support vertex outside its core has degree $2$. This terminology is used for the graphs at which the repeated deletion of a degree-$2$ support vertex outside the core together with its leaf terminates.

A \emph{pendant $2$-path attached at $w$} consists of two new vertices $s,t$ and the two new edges $ws,st$. We repeatedly use the elementary fact that a matching is maximal if and only if its uncovered vertices form an independent set.

\subsection{Pendant $2$-paths and structural reduction}

\begin{lemma}\label{lem:support}
Every maximal matching covers every support vertex. If a graph has a perfect matching, then every support vertex has exactly one leaf neighbor.
\end{lemma}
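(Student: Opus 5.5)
The plan is to prove both parts of \cref{lem:support} directly from the definitions, using the elementary fact recalled above: a matching is maximal if and only if its uncovered vertices form an independent set.

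For the first assertion, let $v$ be a support vertex with a leaf neighbor $u$, and let $M$ be a maximal matching. Suppose, for contradiction, that $M$ does not cover $v$. Since the only neighbor of $u$ is $v$, the leaf $u$ can only be covered by the edge $uv$. That edge is not in $M$, because $v$ is uncovered, so $u$ is uncovered as well. Then $u$ and $v$ are adjacent uncovered vertices, so $M\cup\{uv\}$ is a larger matching. This contradicts maximality, so $v$ is covered. The statement is meaningful because a support vertex is adjacent to a leaf, so the edge $uv$ exists.

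For the second assertion, let $M$ be a perfect matching and let $v$ be a support vertex. Every leaf neighbor $u$ of $v$ must be covered by $M$, and its only incident edge is $uv$, so $uv\in M$. If $v$ had two distinct leaf neighbors $u_1,u_2$, then both $u_1v$ and $u_2v$ would lie in $M$. These two edges share the vertex $v$, which contradicts that $M$ is a matching. Hence $v$ has at most one leaf neighbor, and since it is a support vertex it has at least one, so it has exactly one.

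There is no real obstacle here. The only point needing care is that both arguments use only the fact that a leaf has a unique incident edge. Consequently no connectivity or bicyclic structure is needed, and the lemma holds for arbitrary finite simple graphs.
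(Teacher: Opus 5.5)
Your proof is correct and follows essentially the same route as the paper's. In the first part, an uncovered support vertex forces its leaf neighbor to be uncovered, so the pendant edge could be added. In the second part, two leaf neighbors of one vertex cannot both be covered by a matching.
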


\begin{proof}
If a support vertex $s$ and one of its leaf neighbors are both uncovered, their pendant edge can be added. Thus $s$ is covered by every maximal matching. If $s$ had two leaf neighbors, no matching could cover both leaves, so no perfect matching would exist.
\end{proof}

\begin{lemma}\label{lem:odd-support}
Let $G$ be a connected bicyclic graph of order $2k+1$ in which every maximal matching has size $k$. Then every support vertex of $G$ has exactly one leaf neighbor.
\end{lemma}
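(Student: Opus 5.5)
Suppose, for a contradiction, that some support vertex $s$ of $G$ has two distinct leaf neighbors $x$ and $y$. I will exhibit a maximal matching of size at most $k-1$, which contradicts the hypothesis that every maximal matching has size $k$. The guiding observation is that $x$ and $y$ are adjacent only to $s$. So any matching that uses an edge at $s$ other than $sx$ and $sy$ leaves both $x$ and $y$ uncovered.

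First I need a non-leaf neighbor of $s$. If every neighbor of $s$ were a leaf, then connectivity would force $G$ to be the star with center $s$. A star is a tree, not a bicyclic graph, so this is impossible. Hence $s$ has a neighbor $w$ that is not a leaf; in particular $w\ne x$ and $w\ne y$.

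Next, start from the one-edge matching $\{sw\}$ and extend it greedily to a maximal matching $M'$ of $G$. Every edge at $x$ or $y$ goes to $s$, and $s$ is already covered by $sw$, so no edge of $M'$ can cover $x$ or $y$. Therefore $M'$ leaves at least the two vertices $x$ and $y$ uncovered. A matching in a graph of order $2k+1$ covers exactly $2|M'|$ vertices, so
\[
 2|M'|\le (2k+1)-2=2k-1,\qquad\text{hence}\qquad |M'|\le k-1 .
\]
This contradicts the assumption that every maximal matching of $G$ has size $k$.

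Thus no support vertex has two or more leaf neighbors, and by definition every support vertex has at least one. So every support vertex has exactly one leaf neighbor.

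The argument has no real obstacle. The only point needing care is the existence of the non-leaf neighbor $w$, and this is exactly where the bicyclic (indeed, any non-tree) assumption is used. Note that \cref{lem:support} is not needed here: the contradiction comes directly from counting the uncovered vertices of the greedily extended matching.
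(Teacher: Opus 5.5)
Your proof is correct and is essentially the paper's argument. You take a nonleaf neighbor of $s$ (which exists because a bicyclic graph is not a star), extend that edge to a maximal matching, and note that both leaves stay uncovered, so the matching has size at most $k-1$. The paper phrases the final count through the parity of the number of uncovered vertices (odd, hence at least three), which is the same fact as your integrality step from $2|M'|\le 2k-1$.
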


\begin{proof}
Suppose that a support vertex $s$ has at least two leaf neighbors. Since a bicyclic graph is not a star, $s$ has a nonleaf neighbor $x$. Extend the edge $sx$ to a maximal matching $M$ of $G$. None of the leaf neighbors of $s$ is covered by $M$, so at least two vertices are uncovered. Because $|V(G)|$ is odd, the number of uncovered vertices is odd and hence at least three. Therefore $|M|\le k-1$, a contradiction.
\end{proof}

\begin{lemma}\label{lem:P2}
Let $H$ be a graph and $w\in V(H)$. Form $G$ by adding new vertices $s,t$ and the edges $ws,st$. Then every maximal matching of $G$ is uniquely of one of the forms
\[
 M\cup\{st\},\qquad M\in\Mcal(H),
\]
or
\[
 N\cup\{ws\},\qquad N\in\Mcal(H-w).
\]
Consequently,
\[
 m(G)=m(H)+m(H-w)
\]
and
\[
 m'(G)=m'(H)+m(H)+m'(H-w)+m(H-w).
\]
Moreover, $G$ and $H$ have the same number of perfect matchings.
\end{lemma}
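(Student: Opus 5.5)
The plan is to classify a maximal matching $L$ of $G$ by how it meets the new edges $ws$ and $st$. Since $t$ is a leaf whose only neighbour is $s$, and $s$ is a support vertex, \cref{lem:support} says $L$ covers $s$. So exactly one of $ws$, $st$ lies in $L$; both cannot, since they share $s$. Uniqueness of the form then follows, because $L\cap E(H)$ is determined by $L$, and the case is read off from which of the two edges is present.

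\emph{Case $st\in L$.} Put $M=L\setminus\{st\}$, a matching of $H$. I will check that $M$ is maximal in $H$. If some edge $e\in E(H)$ could be added to $M$, it avoids $s$ and $t$, so $L\cup\{e\}$ would be a matching of $G$, a contradiction. Conversely, for $M\in\Mcal(H)$, the set $M\cup\{st\}$ is a matching. It is maximal because:
\begin{itemize}
\item every edge of $H$ meets $M$;
\item $ws$ meets $st$;
\item $st$ already belongs to the matching.
\end{itemize}

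\emph{Case $ws\in L$.} Put $N=L\setminus\{ws\}$. It contains no edge at $w$, $s$ or $t$, so $N$ is a matching of $H-w$. It is maximal there: an edge of $H-w$ avoids $w,s,t$, so if it could be added to $N$ it could be added to $L$. Conversely, for $N\in\Mcal(H-w)$, take $N\cup\{ws\}$. Edges of $H$ at $w$ meet $ws$, edges of $H-w$ meet $N$, and $st$ meets $ws$, so this matching is maximal.

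This gives a bijection $\Mcal(G)\leftrightarrow\Mcal(H)\sqcup\Mcal(H-w)$. In it, sizes increase by exactly one: $|L|=|M|+1$ or $|L|=|N|+1$. Summing over the two parts gives $m(G)=m(H)+m(H-w)$. Summing sizes gives $m'(G)=\sum(|M|+1)+\sum(|N|+1)$, which is the stated formula.

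For perfect matchings, $t$ can only be covered by $st$, so every perfect matching of $G$ contains $st$ and falls in the first case. Then $M=L\setminus\{st\}$ covers all of $V(H)$, so it is a perfect matching of $H$. Conversely, $M\cup\{st\}$ is perfect in $G$ whenever $M$ is perfect in $H$. Hence $\pc(G)=\pc(H)$.

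There is no real obstacle here. The only point needing care is verifying maximality in both directions, in particular the edges at $w$ in the second case.
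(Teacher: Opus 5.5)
Your proof is correct and follows essentially the same route as the paper. You use that the support vertex $s$ is always covered, split according to whether $st$ or $ws$ lies in the matching, identify the remainder as a maximal matching of $H$ or of $H-w$, and note that $t$ forces $st$ into every perfect matching; your explicit check of the converse maximality directions (including edges at $w$) fills in what the paper leaves implicit.
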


\begin{proof}
Every maximal matching covers the support vertex $s$. If it uses $st$, its restriction to $H$ is maximal. If it does not use $st$, it must use $ws$, and its remaining edges form a maximal matching of $H-w$. The two cases are disjoint and exhaustive. A perfect matching must use $st$, proving the final assertion.
\end{proof}

\begin{corollary}\label{cor:odd-extension}
Suppose $H$ has $2r+1$ vertices and every maximal matching of $H$ has size $r$. Let $G$ be obtained by attaching a pendant $2$-path at $w\in V(H)$. Then every maximal matching of $G$ has size $r+1$ if and only if every maximal matching of $H-w$ is perfect.
\end{corollary}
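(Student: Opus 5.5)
We derive the statement directly from the decomposition in \cref{lem:P2}. That lemma says every maximal matching of $G$ has exactly one of two forms. The first is $M\cup\{st\}$ with $M\in\Mcal(H)$, of size $|M|+1$. The second is $N\cup\{ws\}$ with $N\in\Mcal(H-w)$, of size $|N|+1$. Moreover, each $M\in\Mcal(H)$ and each $N\in\Mcal(H-w)$ actually arises. So the multiset of sizes of maximal matchings of $G$ is the union of the sizes $|M|+1$ over $\Mcal(H)$ and the sizes $|N|+1$ over $\Mcal(H-w)$.

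The graph $G$ has $2r+3$ vertices, so its possible maximal matching sizes are at most $r+1$. By hypothesis every $M\in\Mcal(H)$ has $|M|=r$. Hence every matching of the first form has size exactly $r+1$, and the first family never violates the condition. Therefore every maximal matching of $G$ has size $r+1$ if and only if every $N\in\Mcal(H-w)$ satisfies $|N|+1=r+1$, that is, $|N|=r$.

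The graph $H-w$ has exactly $2r$ vertices. A matching of $H-w$ has size $r$ exactly when it is perfect. Thus the condition says every maximal matching of $H-w$ is perfect. This proves both directions at once.

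The only point needing care is the edge case where $\Mcal(H-w)$ might be degenerate. Here $H-w$ has $2r\ge 0$ vertices, so $\Mcal(H-w)$ is always nonempty; it contains at least the empty matching when $H-w$ has no edges. When $r=0$ the empty matching of the empty graph counts as perfect, so the equivalence remains consistent. Beyond this the argument is routine bookkeeping, and there is no real obstacle: the corollary is essentially a restatement of the bijection in \cref{lem:P2}.
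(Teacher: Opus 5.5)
Your proposal is correct and follows essentially the same route as the paper: apply the decomposition of \cref{lem:P2}, note that the first class $M\cup\{st\}$ always has size $r+1$, and observe that the second class $N\cup\{ws\}$ has size $r+1$ for every $N$ exactly when every maximal matching of the $2r$-vertex graph $H-w$ is perfect. You also state explicitly that every $N\in\Mcal(H-w)$ yields a maximal matching of $G$, which the ``only if'' direction needs and the paper leaves implicit in its counting formula.
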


\begin{proof}
Apply \cref{lem:P2}. The first class always has size $r+1$. The second class has size $1+|N|$, so it has size $r+1$ for every $N$ exactly when every maximal matching of the even-order graph $H-w$ has size $r$.
\end{proof}

\begin{lemma}\label{lem:farthest}
Let $G$ be obtained by attaching trees to a fixed bicyclic core, and suppose every support vertex has one leaf neighbor. If a support vertex lies outside the core, then some support vertex outside the core has degree $2$.
\end{lemma}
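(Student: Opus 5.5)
We will use an extremal choice. Let $G$ be obtained from the core $K$ by attaching trees, so that every vertex outside $K$ lies in a unique tree $T_r$ hanging from a core vertex $r$. Root $T_r$ at $r$, and let $d(x)$ be the distance from $x$ to $r$ inside its tree; this is also the distance from $x$ to the core. By hypothesis some support vertex lies outside the core, so the set of leaves whose neighbor is a support vertex outside the core is nonempty. Among all such leaves, choose a leaf $\ell$ with $d(\ell)$ maximum in its tree, and let $s$ be the neighbor of $\ell$. Then $s\notin K$, and $s$ has a parent $p$ in the rooted tree, with $p$ either a core vertex or a noncore vertex.

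Next we show $\deg s=2$. Suppose $s$ has a child $c\neq\ell$. Since $s$ has exactly one leaf neighbor, namely $\ell$, the vertex $c$ is not a leaf, so $c$ has a child. Descend from $c$ in its subtree to a deepest vertex $z$. Then $z$ is a leaf, since noncore vertices of trees with no children are leaves. Its parent $y$ is a support vertex outside the core, because $y$ lies in the subtree of $c$, which avoids $K$. We also have $d(z)\ge d(c)+1=d(s)+2>d(\ell)$. This contradicts the choice of $\ell$, with one caveat. The cleanest approach is to choose $\ell$ as a deepest vertex overall among noncore vertices of its tree, or simply among leaves attached to noncore support vertices, which is the choice made above. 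The descending argument then yields a deeper such leaf. Hence $s$ has no child other than $\ell$. Its neighbors are therefore exactly $p$ and $\ell$, so $\deg s=2$.

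The only delicate point is guaranteeing that the descent produces a support vertex outside the core. This holds because the subtree below any noncore vertex contains no core vertices. It also needs the deepest vertex below $c$ to be at depth at least $d(c)+1$, which follows because $c$ is not a leaf. The uniqueness of leaf neighbors is used exactly once: it ensures the extra child $c$ is not itself a leaf. Without that hypothesis a vertex with two pendant leaves would be a counterexample.
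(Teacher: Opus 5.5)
Your proof is correct and follows the paper's argument: choose a support vertex outside the core that is farthest from the core (you do this equivalently through its leaf), and observe that any further child would be a nonleaf whose subtree contains a deeper noncore support vertex, so the unique-leaf hypothesis forces degree $2$. You spell out the descent step that the paper compresses into ``hence are leaves''; the stray ``caveat'' remark can be deleted, since the choice you already made works as written.
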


\begin{proof}
Choose a support vertex farthest from the core. Apart from its unique neighbor toward the core, all its neighbors lie farther from the core and hence are leaves. Since it has only one leaf neighbor, its degree is $2$.
\end{proof}

\subsection{Randomly matchable graphs and cut vertices}

We need only the sparse case of the classical characterization of randomly matchable graphs.

\begin{lemma}\label{lem:sparse-random}
Let $H$ be a graph each of whose connected components has cyclomatic number at most one. Then every maximal matching of $H$ is perfect if and only if every connected component of $H$ is isomorphic to $K_2$ or $C_4$.
\end{lemma}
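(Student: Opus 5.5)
We prove \cref{lem:sparse-random} by reducing to one connected component at a time, since a matching of $H$ is maximal (respectively perfect) exactly when its restriction to every component is maximal (respectively perfect). The ``if'' direction is a direct check. In $K_2$ the unique maximal matching is the edge itself. In $C_4$ with vertices $v_1v_2v_3v_4$, any maximal matching contains some edge, say $v_1v_2$, and the remaining vertices $v_3,v_4$ are adjacent, so maximality forces $v_3v_4$ into the matching; hence it is perfect.

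For the ``only if'' direction, let $K$ be a connected component with cyclomatic number at most one in which every maximal matching is perfect. Then $K$ has even order, so $K$ is not $K_1$, and $K$ is either a tree or a unicyclic graph. The main tool is the following observation. If $K$ contains a path $xyz$ such that $x$ and $z$ can both be left uncovered, we get a contradiction. Concretely, suppose some matching $M_0$ contains $yz$ (or an edge at $y$) and leaves $x$ uncovered together with another vertex. Extend $M_0$ greedily to a maximal matching. Whenever the greedy extension is forced to leave some vertex uncovered, that maximal matching is not perfect, which contradicts the hypothesis.

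\textbf{Tree case.} Suppose $K$ is a tree with at least three vertices. Take a longest path $v_0v_1\cdots v_d$ with $d\ge2$. If $v_1$ has a second leaf neighbor, then any maximal matching misses one of the two leaves, a contradiction (the same argument as in \cref{lem:support}). So $v_1$ has degree $2$, because every neighbor of $v_1$ other than $v_2$ is a leaf by maximality of the path. Extend $\{v_1v_2\}$ to a maximal matching. It leaves $v_0$ uncovered, contradicting perfectness. Hence $K\cong K_2$.

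\textbf{Unicyclic case.} Let $C$ be the unique cycle of $K$. Suppose first that $K\ne C$, so some tree hangs off a cycle vertex. Apply the longest-path argument to a vertex farthest from $C$. If that vertex is at distance at least $2$ from $C$, the tree argument above produces a degree-$2$ support vertex $s$ whose other neighbor $w$ is not the leaf. Matching $sw$ and extending to a maximal matching leaves the leaf uncovered, a contradiction. Otherwise every non-cycle vertex is a leaf attached to a cycle vertex $c$. Pick a cycle neighbor $c'$ of $c$, match $cc'$, and extend. The leaf at $c$ stays uncovered, again a contradiction. Therefore $K=C_t$ with $t$ even.

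It remains to treat $t\ge6$. Take the matching $\{v_1v_2,v_4v_5\}$ and extend it to a maximal matching. Vertex $v_3$ remains uncovered, because both of its neighbors are already matched, so the matching is not perfect. For $t=4$ we get $C_4$.

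The only delicate points are the bookkeeping in the unicyclic case with pendant trees and the case $t\ge6$; both reduce to the single observation that one can build a matching that isolates one vertex.
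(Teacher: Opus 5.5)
Your proof is correct, but it takes a different route from the paper.

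\textbf{The paper's route.} It proves the lemma in two lines by citing Sumner's classification: a connected randomly matchable graph is $K_{2n}$ or $K_{n,n}$. It then notes that among these only $K_2$ and $C_4$ have cyclomatic number at most one.

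\textbf{Your route.} You give a self-contained elementary argument. Its single ingredient is this: if a matching covers every neighbour of a vertex $x$ but not $x$, then every maximal extension leaves $x$ uncovered. You apply it to a leaf whose support vertex is matched elsewhere, and, for $C_t$ with $t\ge 6$, to $v_3$ via $\{v_1v_2,v_4v_5\}$.

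\textbf{Comparison.} The paper's route is shorter and places the lemma in its classical context. However, it relies on an external theorem that is much stronger than what is needed here. Yours can be checked entirely inside the paper and uses the sparsity hypothesis directly.

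\textbf{Two minor remarks.}
\begin{enumerate}
\item Your ``main tool'' paragraph is stated loosely. A matching containing $yz$ and missing $x$ does not by itself keep $x$ uncovered. What you actually use, correctly, in each concrete case is that all neighbours of the target vertex are already covered. State it that way.
\item The longest-path and farthest-vertex bookkeeping is unnecessary. In any connected component other than $K_2$, a leaf $y$ has a support vertex $s$ with another neighbour $w$. Extending $\{sw\}$ to a maximal matching leaves $y$ uncovered. So every component with a leaf is $K_2$. The only remaining case is then a leafless graph of cyclomatic number at most one, which is a cycle, and your $C_t$ argument finishes it.
\end{enumerate}
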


\begin{proof}
By the characterization of Sumner~\cite{Sumner}, a connected randomly matchable graph is either a complete graph of even order or a complete bipartite graph $K_{r,r}$. Under cyclomatic number at most one, the only possibilities are $K_2$ and $C_4$. Conversely, every maximal matching in $K_2$ or $C_4$ is perfect, and this property is preserved under disjoint union.
\end{proof}

\begin{lemma}\label{lem:tree-two-deletions}
Let $T$ be a tree and let $a,b\in V(T)$ be distinct. If both $T-a$ and $T-b$ are disjoint unions of copies of $K_2$, then $T\cong P_3$, with $a$ and $b$ as its endvertices.
\end{lemma}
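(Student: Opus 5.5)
Since $T-a$ is a disjoint union of copies of $K_2$, it has a perfect matching, so $|V(T)|-1$ is even and $|V(T)|=2r+1$ is odd. In particular $|V(T)|\ge 3$, because $|V(T)|=1$ would not allow two distinct vertices $a,b$. The plan is to use the uniqueness of perfect matchings in forests to force $a$ and $b$ to be leaves, and then a short degree count on their common neighbour.

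First I will show that $a$ and $b$ are leaves of $T$. Let $M_a$ be the perfect matching of $T-a$ formed by its $K_2$-components. Every component of $T-a$ contains exactly one neighbour of $a$, and every component is a $K_2$, so $\deg_T(a)$ equals the number of $K_2$'s, namely $r$. The same holds for $b$, so $\deg_T(a)=\deg_T(b)=r$. Now $b\neq a$ lies in some component $\{b,b'\}$ of $T-a$, and $b'$ has degree $1$ in $T-a$. Suppose first that $b$ is adjacent to $a$. Then $b'$ is not adjacent to $a$, since otherwise $T$ would contain a triangle. Hence $b'$ is a leaf of $T$ with neighbour $b$, and in $T-b$ the vertex $b'$ is isolated, a contradiction. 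So $b$ is not adjacent to $a$, and $b'$ must be the neighbour of $a$ in that component. Therefore $b$ is a leaf of $T$ whose unique neighbour is $b'$, and symmetrically $a$ is a leaf of $T$.

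Next, the relation $\deg_T(a)=r$ together with $\deg_T(a)=1$ gives $r=1$. Hence $|V(T)|=3$ and $T\cong P_3$. Both $a$ and $b$ are leaves, so they are the two endvertices of this path. Alternatively, one can argue structurally: the unique neighbour $x$ of $a$ is matched to some vertex $y$ in $T-a$. In $T-b$, the vertex $a$ must be matched to $x$, so $y$ has to be matched elsewhere. Since $\{x,y\}$ is a component of $T-a$, the only remaining neighbour of $y$ is $a$ (impossible) or $b$, which forces $y=b$ and $V(T)=\{a,x,b\}$.

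The only delicate point is the first step, where one has to exclude $a\sim b$ and identify $b$'s neighbour. The degree-counting observation, that each component of $T-a$ contains exactly one neighbour of $a$, handles this cleanly, so I do not expect any real obstacle.
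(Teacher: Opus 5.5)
Your proof is correct, but it takes a different route from the paper's.

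The paper never looks at the components that contain $a$ or $b$. It observes that every $v\notin\{a,b\}$ has degree $1$ in both $T-a$ and $T-b$, so $v\sim a$ if and only if $v\sim b$. It then uses connectedness to rule out vertices adjacent to neither. Hence every other vertex is a common neighbour of $a$ and $b$, and acyclicity leaves exactly one such vertex.

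You instead use the tree fact that the components of $T-a$ correspond bijectively to the neighbours of $a$, which gives $\deg_T(a)=\deg_T(b)=r$ at once. You then examine the single component $\{b,b'\}$ of $T-a$ to show that $b$ is a leaf, and $r=\deg_T(b)=1$ finishes.

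What each buys: your route puts acyclicity into the degree count at the start and then needs only one local check. Showing that $b$ alone is a leaf already forces $r=1$; the symmetric step for $a$ is used only to identify the endvertices. The paper's route is symmetric in $a$ and $b$ and rebuilds $T$ from the adjacency pattern of the remaining vertices, using connectivity and acyclicity only at the end.

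Your ``alternative'' structural argument is phrased confusingly. Once $a$ is known to be a leaf, $y$ has no neighbour besides $x$, so ``or $b$'' does not make sense as stated. The actual reason $y=b$ is that the component of $a$ in $T-b$ must be $\{a,x\}$. So $x$ has no neighbour other than $a$ in $T-b$, and its neighbour $y$ must therefore be the deleted vertex $b$. Your main argument does not depend on this aside.
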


\begin{proof}
For every $v\in V(T)\setminus\{a,b\}$, we have $d_{T-a}(v)=d_{T-b}(v)=1$. Hence $v$ is adjacent to $a$ if and only if it is adjacent to $b$. If $v$ were adjacent to neither, then $d_T(v)=1$; its unique neighbor could not be adjacent to $a$ or $b$, and the resulting copy of $K_2$ would be a component of $T$, contrary to connectedness. Thus every vertex outside $\{a,b\}$ is adjacent to both $a$ and $b$. Since $T$ is a tree, there is exactly one such vertex, and $T$ is the path $a v b$.
\end{proof}

\begin{lemma}\label{lem:cut-splice}
Let $x$ be a cut vertex of $G$, and write
\[
 G-x=H_1\cup\cdots\cup H_s.
\]
For each $i$, let $S_i=N_G(x)\cap V(H_i)$. Fix $j$ and $z\in S_j$. If $M_j$ is a maximal matching of $H_j-z$ and $M_i$ is a maximal matching of $H_i$ for $i\ne j$, then
\[
 \{xz\}\cup M_j\cup\bigcup_{i\ne j}M_i
\]
is a maximal matching of $G$.
\end{lemma}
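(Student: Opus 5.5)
The plan is to verify the two defining properties of a maximal matching directly: first that the displayed set $M$ is a matching, and then that it is maximal, using the criterion recalled earlier that a matching is maximal if and only if its uncovered vertices form an independent set. Throughout, write
\[
 M=\{xz\}\cup M_j\cup\bigcup_{i\ne j}M_i .
\]

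For the matching property, I will note that the vertex sets $V(H_1),\dots,V(H_s)$ and $\{x\}$ are pairwise disjoint. Each $M_i$ with $i\ne j$ uses only vertices of $H_i$, and $M_j$ uses only vertices of $H_j-z$. The edge $xz$ uses $x$ and $z$, and neither vertex lies in any $M_i$: $x$ belongs to no $H_i$, and $z$ was deleted before $M_j$ was chosen. Hence the edges of $M$ are pairwise nonincident.

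For maximality, let $u,v$ be adjacent vertices of $G$; I must show at least one of them is covered by $M$. The vertices $x$ and $z$ are covered, so assume $u,v\notin\{x,z\}$. Since the $H_i$ are the components of $G-x$, every edge of $G-x$ lies inside a single $H_i$, so $u,v\in V(H_i)$ for some $i$. If $i\ne j$, the edge $uv$ belongs to $H_i$, and maximality of $M_i$ in $H_i$ forces one endpoint to be covered by $M_i\subseteq M$. If $i=j$, then $u,v\ne z$ and $uv$ is an edge of $H_j-z$, so maximality of $M_j$ in $H_j-z$ gives the same conclusion. Consequently the uncovered vertices form an independent set, and $M$ is maximal.

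I expect no genuine obstacle. The only point needing care is to check that every edge of $G$ has been accounted for: edges incident with $x$ or $z$ are dominated by $xz$, and all remaining edges lie within a single component $H_i$ (or within $H_j-z$). Note that the hypothesis $z\in S_j$ is used only to guarantee that $xz$ is an edge of $G$.
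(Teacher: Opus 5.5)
Your proposal is correct and follows essentially the same approach as the paper: $xz$ dominates every edge at $x$ or $z$, and every other edge lies inside some $H_i$ (inside $H_j-z$ when $i=j$), where the corresponding $M_i$ dominates it. You also verify explicitly that the union is a matching, a step the paper leaves implicit.
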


\begin{proof}
The edge $xz$ dominates all edges incident with $x$ or $z$, and each $M_i$ dominates the edges internal to its component. Hence every edge of $G$ is dominated by the displayed matching.
\end{proof}

\subsection{Perfect-matching switching}

Throughout the switching arguments, $G$ has even order $2k$ and size $2k+1$.

\begin{lemma}\label{lem:switch}
Let $M$ be a perfect matching of $G$, let $e=xy\notin M$, and let $xx',yy'\in M$. Set
\[
 M_e=(M\setminus\{xx',yy'\})\cup\{xy\}.
\]
Then $M_e$ has size $k-1$ and leaves precisely $x',y'$ uncovered. It is maximal if and only if $x'y'\notin E(G)$. If $x'y'\in E(G)$, then $M_e\cup\{x'y'\}$ is a second perfect matching.
\end{lemma}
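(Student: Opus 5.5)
The argument is a direct verification of the three claims about $M_e$, working only from the definitions and the fact recalled in the preliminaries: a matching is maximal if and only if its uncovered vertices form an independent set.

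\emph{Well-definedness and size.} Since $M$ is perfect, both $x$ and $y$ are covered by $M$, so the partners $x'$ and $y'$ exist. Because $xy\notin M$, the edges $xx'$ and $yy'$ are distinct. They are disjoint as $M$-edges, so $x,x',y,y'$ are four distinct vertices; in particular $x'\ne y$ and $y'\ne x$.

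The set $M\setminus\{xx',yy'\}$ is a matching that covers exactly $V(G)\setminus\{x,x',y,y'\}$. Adding $xy$, whose endpoints are uncovered there, gives a matching $M_e$ of size $k-2+1=k-1$. Its uncovered vertices are precisely $x'$ and $y'$.

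\emph{Maximality.} By the independence criterion, $M_e$ is maximal exactly when $\{x',y'\}$ is independent, that is, when $x'y'\notin E(G)$. Directly: the only edge that could extend $M_e$ is one joining two uncovered vertices, and the only candidate is $x'y'$.

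\emph{Second perfect matching.} If $x'y'\in E(G)$, then $M_e\cup\{x'y'\}$ is a matching covering all $2k$ vertices, so it is perfect. It differs from $M$ because it contains $xy\notin M$.

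There is no real obstacle here. The only point needing care is the distinctness of $x,x',y,y'$, which guarantees that exactly two vertices are uncovered and that the count $k-1$ is correct.
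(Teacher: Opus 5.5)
Your proposal is correct and follows the same direct verification as the paper: only $x'$ and $y'$ are left uncovered, so $x'y'$ is the only edge that could be added. Your explicit check that $x,x',y,y'$ are distinct fills in a detail the paper leaves implicit.
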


\begin{proof}
All vertices except $x',y'$ remain covered. Therefore the only edge that can be added is $x'y'$.
\end{proof}

For a perfect matching $M$, let
\[
 \mathcal S(M)=\{M_e:e\in E(G)\setminus M\text{ and }M_e\text{ is maximal}\}
\]
be its \emph{switching family}.

\begin{lemma}\label{lem:switch-distance}
Let $M$ and $M'$ be perfect matchings. If $Q\in\mathcal S(M)\cap\mathcal S(M')$, then
\[
 |M\triangle M'|\le6.
\]
Moreover, the following statements hold.
\begin{enumerate}[label=\textup{(\roman*)}]
\item If $|M\triangle M'|=6$, then $|\mathcal S(M)\cap\mathcal S(M')|\le3$.
\item If $|M\triangle M'|=4$, then $M\triangle M'$ is an alternating $4$-cycle. Every common switched matching is obtained by switching along a diagonal of this cycle. Consequently, the two switching families have at most one common member; if the alternating $4$-cycle is chordless, they are disjoint.
\end{enumerate}
\end{lemma}

\begin{proof}
A switch changes exactly three edges, so
\[
 |M\triangle M'|\le |M\triangle Q|+|Q\triangle M'|=6.
\]
The symmetric difference of two perfect matchings is a disjoint union of even alternating cycles. If its size is $6$, it is one alternating $6$-cycle. A common switched matching retains one edge from each of the two alternating perfect matchings on this cycle, and the retained edges must be nonincident. There are three possible pairs.

If the symmetric difference has size $4$, it is one alternating $4$-cycle. Since $M$ and $M'$ agree outside this cycle, a direct comparison of the two switches shows that a common switched matching contains all their common edges and one diagonal of the $4$-cycle. If the cycle has no diagonal, no common member exists. If it has exactly one diagonal, that diagonal gives at most one common maximal switch. If it has both diagonals, either diagonal leaves the endpoints of the other diagonal uncovered, so neither switch is maximal.
\end{proof}

\section{Upper bounds in the class \texorpdfstring{$\mathcal{A}(p,q,l)$}{A(p,q,l)}}\label{sec:A}

Let $G\in\Aclass_n$. We shall use three parameterized graph families. For $r\ge0$, let $\AU_r$ be obtained from $\Th(1,2,3)$ by attaching $r$ pendant $2$-paths at the internal vertex of its length-$2$ path; let $\AV_r$ be obtained from $\Th(2,2,2)\cong K_{2,3}$ by attaching $r$ pendant $2$-paths at one degree-$2$ vertex; and let $\AW_r$ be obtained from $\Th(1,3,3)$ by attaching $r$ pendant $2$-paths at one of its two branch vertices. Their orders are $5+2r$, $5+2r$, and $6+2r$, respectively; see Figure~\ref{fig:A-extremal}(\ref{fig:A-U})--(\ref{fig:A-W}). The two additional graphs that occur in the odd-order equality statement are the graph obtained from $\Th(1,2,2)$ by attaching one leaf to an internal vertex of a length-$2$ path and the graph $\Th(1,3,4)$; see Figure~\ref{fig:A-extremal}(\ref{fig:A-exception-five}) and Figure~\ref{fig:A-extremal}(\ref{fig:A-exception-seven}).

\begin{figure}[htbp]
\centering
\subcaptionbox{$\AU_r\label{fig:A-U}$}[0.31\textwidth]{%
\begin{tikzpicture}[scale=0.85]
\node[vertex] (u) at (-1.45,0) {};
\node[vertex] (v) at (1.45,0) {};
\node[vertex] (a) at (0,0.85) {};
\node[vertex] (b) at (-0.35,-0.8) {};
\node[vertex] (c) at (0.7,-0.8) {};
\draw[edge] (u)--(v);
\draw[edge] (u)--(a)--(v);
\draw[edge] (u)--(b)--(c)--(v);
\node[vertex] (s1) at (-0.45,1.55) {};
\node[vertex] (t1) at (-0.8,2.15) {};
\node[vertex] (s2) at (0.45,1.55) {};
\node[vertex] (t2) at (0.8,2.15) {};
\draw[edge] (a)--(s1)--(t1);
\draw[edge] (a)--(s2)--(t2);
\node at (0,1.65) {$\cdots$};
\end{tikzpicture}}
\hfill
\subcaptionbox{$\AV_r\label{fig:A-V}$}[0.31\textwidth]{%
\begin{tikzpicture}[scale=0.85]
\node[vertex] (u) at (-1.3,0) {};
\node[vertex] (v) at (1.3,0) {};
\node[vertex] (a) at (0,0.95) {};
\node[vertex] (b) at (0,0) {};
\node[vertex] (c) at (0,-0.95) {};
\draw[edge] (u)--(a)--(v);
\draw[edge] (u)--(b)--(v);
\draw[edge] (u)--(c)--(v);
\node[vertex] (s1) at (-0.48,1.70) {};
\node[vertex] (t1) at (-0.88,2.32) {};
\node[vertex] (s2) at (0.48,1.70) {};
\node[vertex] (t2) at (0.88,2.32) {};
\draw[edge] (a)--(s1)--(t1);
\draw[edge] (a)--(s2)--(t2);
\node at (0,1.80) {$\cdots$};
\end{tikzpicture}}
\hfill
\subcaptionbox{$\AW_r\label{fig:A-W}$}[0.31\textwidth]{%
\begin{tikzpicture}[scale=0.83]
\node[vertex] (u) at (-1.2,0) {};
\node[vertex] (v) at (1.4,0) {};
\node[vertex] (a) at (-0.45,0.9) {};
\node[vertex] (b) at (0.45,0.9) {};
\node[vertex] (c) at (-0.45,-0.9) {};
\node[vertex] (d) at (0.45,-0.9) {};
\draw[edge] (u)--(v);
\draw[edge] (u)--(a)--(b)--(v);
\draw[edge] (u)--(c)--(d)--(v);
\node[vertex] (s1) at (-1.9,0.8) {};
\node[vertex] (t1) at (-2.45,1.35) {};
\node[vertex] (s2) at (-1.9,-0.8) {};
\node[vertex] (t2) at (-2.45,-1.35) {};
\draw[edge] (u)--(s1)--(t1);
\draw[edge] (u)--(s2)--(t2);
\node at (-1.95,0) {$\vdots$};
\end{tikzpicture}}

\par\medskip
\subcaptionbox{The graph obtained from $\Th(1,2,2)$ by attaching one leaf to an internal vertex of a length-$2$ path\label{fig:A-exception-five}}[0.47\textwidth]{%
\begin{tikzpicture}[scale=0.88]
\node[vertex] (u) at (-1.3,0) {};
\node[vertex] (v) at (1.3,0) {};
\node[vertex] (a) at (-0.35,0.85) {};
\node[vertex] (b) at (-0.35,-0.85) {};
\node[vertex] (y) at (-0.35,1.7) {};
\draw[edge] (u)--(v);
\draw[edge] (u)--(a)--(v);
\draw[edge] (u)--(b)--(v);
\draw[edge] (a)--(y);
\end{tikzpicture}}
\hfill
\subcaptionbox{$\Th(1,3,4)\label{fig:A-exception-seven}$}[0.31\textwidth]{%
\begin{tikzpicture}[scale=0.83]
\node[vertex] (u) at (-1.35,0) {};
\node[vertex] (v) at (1.5,0) {};
\node[vertex] (a) at (-0.65,0.9) {};
\node[vertex] (b) at (0.25,0.9) {};
\node[vertex] (c) at (-0.7,-0.9) {};
\node[vertex] (d) at (0.15,-0.9) {};
\node[vertex] (e) at (0.85,-0.9) {};
\draw[edge] (u)--(v);
\draw[edge] (u)--(a)--(b)--(v);
\draw[edge] (u)--(c)--(d)--(e)--(v);
\end{tikzpicture}}
\caption{The graphs $\AU_r$, $\AV_r$, $\AW_r$, the exceptional graph of order $5$, and $\Th(1,3,4)$. In each parameterized graph, the fan with an ellipsis represents $r$ pendant $2$-paths; it is absent when $r=0$.}
\label{fig:A-extremal}
\end{figure}
\FloatBarrier

\subsection{Odd order}

For odd $n=2k+1$, every matching has size at most $k$, so $\avm(G)\le k$. Equality holds if and only if every maximal matching has size $k$. We classify these equimatchable graphs by repeatedly deleting pendant $2$-paths.

\begin{lemma}\label{lem:A-independent-three}
Let $T=\Th(a,b,c)$ have odd order at least $9$. Then there is an independent set $S$ of three vertices such that $T-S$ has a perfect matching. Hence $T$ has a maximal matching of size $(|V(T)|-3)/2$.
\end{lemma}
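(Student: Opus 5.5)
We need an independent set S of three vertices in T = Th(a,b,c) with |V(T)| = a+b+c-1 odd and at least 9, such that T - S has a perfect matching. The second sentence then follows at once: a perfect matching of T - S covers every vertex except the three in S, and since S is independent no edge can be added. So it is a maximal matching of T of size (|V(T)|-3)/2.

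Set-up. Write u and v for the two branch vertices and P_a, P_b, P_c for the three paths. The condition is that a+b+c is even and a+b+c \ge 10. The plan is to pick S so that T - S is a disjoint union of paths of even order. A path of even order has a perfect matching, so this suffices.

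Main construction. Put u \in S. Then T - u is a tree consisting of v together with three hanging paths of a-1, b-1 and c-1 internal vertices. It is convenient to treat v as the end of a path and describe the cuts explicitly.

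First case: a, b, c all even. Let the counts of internal vertices be a-1, b-1, c-1, all odd. Choose S = \{u, x, y\}, where x is the neighbour of v on P_a (internal to P_a) and y is the neighbour of u on P_b. These two choices need care:
\begin{itemize}
\item x must not be adjacent to u, which requires a \ge 3.
\item y is adjacent to u, so it cannot be chosen; I would instead take y to be the vertex at distance 2 from u on P_b.
\end{itemize}
Rather than fight this directly, I will handle the general case by parity bookkeeping, as follows.

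General recipe. Removing u leaves the tree T - u, rooted at v, with three branches of lengths a-1, b-1, c-1 (counting internal vertices). Choose two further vertices x and y, nonadjacent to u and to each other, so that every remaining component is a path of even order. Concretely, let x be an internal vertex of one branch chosen so that the part of that branch between u's side and x has even length. The component containing v should then be the path formed by the other two branches through v, and it should have even order. Since a+b+c is even and three vertices are removed from an odd total, the leftover order is even, so only one parity constraint per cut needs checking.

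Case split on the parity pattern of (a,b,c). Because a+b+c is even, either all three are even, or exactly two are odd.
\begin{itemize}
\item If some path has length at least 4, place the two extra deleted vertices on it (or on two paths) at positions of the correct parity. Positions at distance at least 2 from u and from each other keep S independent.
\item The length condition a+b+c \ge 10 guarantees enough room. I would verify the remaining small configurations, where all lengths are at most 3, directly: these are Th(3,3,3), Th(2,3,3)-type, and so on, and only finitely many have order at least 9, for instance Th(3,3,4) and Th(2,4,4).
\end{itemize}

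Main obstacle. The hardest part is organising the parity cases so that independence of S, in particular non-adjacency to u and the treatment of the edge uv when one length equals 1, holds uniformly. I would first try the uniform choice u together with two vertices on the longest path at distances 2 and 4 (suitably shifted) from u. I would then patch the residual cases by symmetric choices that avoid u, such as one internal vertex on each of the three paths.
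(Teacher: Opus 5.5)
\textbf{There is a genuine gap: the proposal never constructs $S$.} Your reduction is fine: an independent $S$ with $T-S$ a disjoint union of even-order paths gives a maximal matching of size $(|V(T)|-3)/2$. But the existence of such an $S$ is the whole content of the lemma.

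The concrete choices you write down fail. Suppose $u\in S$. Then the deleted vertex nearest to $u$ on any path must lie at odd distance at least $3$ from $u$; otherwise the segment between them has odd order. Your $y$ at distance $2$ on $P_b$ leaves the neighbour of $u$ on $P_b$ isolated. Your ``uniform'' choice of vertices at distances $2$ and $4$ on the longest path fails for the same reason.

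Your fallback to ``small configurations where all lengths are at most $3$'' is vacuous, since $a,b,c\le3$ forces order at most $8$. The examples you name, $\Th(3,3,4)$ and $\Th(2,4,4)$, even contain a path of length $4$. So the bullet ``place the two extra deleted vertices at positions of the correct parity'' ends up carrying every case, without ever specifying a position.

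\textbf{Putting a branch vertex into $S$ cannot be repaired by shifting positions.} Two examples show this.
\begin{itemize}
\item In $\Th(2,2,6)$, suppose $u\in S$. If $v\notin S$, then $v$ keeps the two middle vertices of the length-$2$ paths as leaf neighbours. If $v\in S$, those two vertices become isolated. By symmetry, $v$ cannot be in $S$ either.
\item In $\Th(1,3,6)$, $T-u$ is a path on $8$ vertices, and no admissible pair of further deletions leaves even segments. By the automorphism exchanging $u$ and $v$, the same holds for $v$.
\end{itemize}
So the ``patch'' avoiding $u$ is not a residual corner case. It is a necessary family of cases that must be written out.

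\textbf{What is missing is an explicit parity case analysis.} For example:
\begin{itemize}
\item All lengths even, two of them at least $4$: delete $u$ and the neighbours of $v$ on those two paths.
\item All lengths even, two equal to $2$: delete the middle vertices of the two length-$2$ paths and the second internal vertex of the third path.
\item Exactly two odd lengths, both at least $3$: delete the first internal vertex of one odd path, the first internal vertex of the even path, and the last internal vertex of the other odd path.
\item One odd length equal to $1$: choose separately according to whether the other odd length $r$ and the even length $e$ satisfy $r\ge5$ and $e\ge4$, or $r=3$, or $e=2$.
\end{itemize}
Each choice must then be checked for independence and for even-order residual components.
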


\begin{proof}
Write the three $u$--$v$ paths as $P_a,P_b,P_c$. Since $a+b+c$ is even, either all three lengths are even or exactly two are odd.

If all three are even and two, say $b,c$, are at least $4$, delete $u$ and the vertices immediately preceding $v$ on $P_b,P_c$. The three deleted vertices are independent, and the remaining pieces are even-order paths. If two paths have length $2$, say $a=b=2$, then $c\ge6$; delete the two internal vertices of $P_a,P_b$ and the second internal vertex of $P_c$. The remaining graph is a disjoint union of even paths.

Now suppose exactly two path lengths are odd. If both odd lengths $r,s$ are at least $3$ and the even length is $e$, delete the first internal vertex of $P_r$, the last internal vertex of $P_s$, and the first internal vertex of $P_e$. The residual path pieces pair through $v$ and have even order. If one odd path has length $1$, let the other odd length be $r$ and the even length be $e$. When $r\ge5$ and $e\ge4$, delete $u$, the third internal vertex of $P_r$, and the last internal vertex of $P_e$. When $r=3$, necessarily $e\ge6$; delete the first internal vertex of $P_r$ and the second and last internal vertices of $P_e$. When $e=2$, necessarily $r\ge7$; delete $u$ and the third and last internal vertices of $P_r$. In each subcase the deleted vertices are independent and all remaining components are even paths.

A perfect matching of $T-S$ is maximal in $T$ because the uncovered set $S$ is independent.
\end{proof}

\begin{lemma}\label{lem:A-pure-odd}
Let $T=\Th(a,b,c)$ have odd order. Every maximal matching of $T$ is maximum if and only if
\[
 (a,b,c)\in\{(1,2,3),(2,2,2),(1,3,4)\}.
\]
The corresponding maximal-matching generating polynomials are $6z^2,6z^2$, and $9z^3$.
\end{lemma}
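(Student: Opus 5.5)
By \cref{lem:A-independent-three}, a theta graph of odd order at least $9$ has a maximal matching of size $(|V(T)|-3)/2$. Such a graph is therefore not equimatchable, and only the orders $5$ and $7$ remain, since the smallest odd-order theta graph with at most one path of length $1$ has order $5$. The plan is to list all unordered triples $(a,b,c)$ with at most one entry equal to $1$ and $a+b+c-1\in\{5,7\}$. For each triple we either exhibit a maximal matching that is not maximum, or verify that all maximal matchings are maximum and count them, which also gives the generating polynomial.

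For order $5$ we need $a+b+c=6$. The admissible triples are $(1,2,3)$ and $(2,2,2)$; the triple $(1,1,4)$ is excluded because it has two $1$'s. In each of these graphs $n=5$, so every maximal matching has size $1$ or $2$. A single edge $xy$ is maximal only if $G-\{x,y\}$ (three vertices) has no edges. In $\Th(1,2,3)$, write the cycle as $u,a,v,c,b$ with the chord $uv$. Removing any edge leaves at least one edge among the remaining three vertices, which can be checked quickly. In $K_{2,3}$, removing an edge leaves a vertex of the size-$2$ side adjacent to the two remaining vertices of the size-$3$ side. So all maximal matchings have size $2$. Counting the $2$-matchings gives $6$ in each case, hence $6z^2$. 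For $K_{2,3}$ this is immediate: $3\cdot 2=6$.

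For order $7$ we need $a+b+c=8$. The admissible triples are $(1,2,5)$, $(1,3,4)$, $(2,2,4)$ and $(2,3,3)$. For each graph other than $\Th(1,3,4)$ we exhibit an independent pair $\{x,y\}$ together with a perfect matching of $G-\{x,y\}$ after removing one more vertex $z$, with $\{x,y,z\}$ independent. This gives a maximal matching of size $2$, exactly as in the proof of \cref{lem:A-independent-three}.
\begin{itemize}
\item In $(2,2,4)$, delete the two midpoints of the length-$2$ paths and the middle vertex of the length-$4$ path. What remains is $u$ with one neighbor and $v$ with one neighbor, i.e.\ two $K_2$'s.
\item In $(2,3,3)$, delete the midpoint of $P_2$ and one internal vertex of each $P_3$, chosen adjacent to $u$ and to $v$ respectively. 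The remainder splits into two $K_2$'s.
\item In $(1,2,5)$, delete the midpoint of $P_2$ and the second and fourth internal vertices of $P_5$. The remainder is $u$–$p_1$, $p_3$ isolated? This choice fails, so instead delete $u$ and the second and fourth internal vertices of $P_5$. The remainder is $\{p_1\}$, $\{p_3\}$, $\{m,v\}$, where $m$ is the midpoint of $P_2$. Since we need a perfect matching on $4$ vertices, this also needs adjusting. The plan is to search the few remaining small choices by hand until a valid one is found.
\end{itemize}

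Finally, for $\Th(1,3,4)$ we verify that no maximal matching has size $2$. The cleanest route is to show that every independent set $S$ of size $3$ leaves $G-S$ without a perfect matching, or with uncovered edges, by going through the cases according to whether $S$ contains $u$ or $v$. Then we count the perfect matchings of the $6$-vertex graphs $G-x$ over all $x$, and the total should be $9$.

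The main obstacle is the bookkeeping in these order-$7$ verifications: the equimatchability check and the count for $\Th(1,3,4)$. Everything else is a finite, routine enumeration.
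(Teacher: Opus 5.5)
Your route matches the paper's: use \cref{lem:A-independent-three} to restrict to orders $5$ and $7$, list the admissible triples, exhibit a nonmaximum maximal matching for $(1,2,5)$, $(2,2,4)$ and $(2,3,3)$, and enumerate $\Th(1,3,4)$ directly. Your order-$5$ argument is correct, and so are your witnesses for $(2,2,4)$ and $(2,3,3)$.

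\textbf{Gap: the case $(1,2,5)$.} Both deletion sets you try fail, as you notice yourself. You then leave the case as a search to be done by hand, so the ``only if'' direction is not yet proved. This case is not covered by \cref{lem:A-independent-three}, whose $e=2$ subcase needs $r\ge7$, so it needs its own witness. Ad hoc choices keep failing because this graph has exactly one nonmaximum maximal matching. Write the long path as $u p_1p_2p_3p_4 v$ and let $m$ be the midpoint of the length-$2$ path. Then $S=\{m,p_1,p_4\}$ is independent, and $G-S$ has the perfect matching $\{uv,p_2p_3\}$. Hence $\{uv,p_2p_3\}$ is a maximal matching of size $2<3$.

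\textbf{Incomplete step: $\Th(1,3,4)$.} This part is only announced (``the total should be $9$''). Your method works and is a mild variant of the paper's sorting by the edge covering $u$, but it needs to be carried out.
\begin{enumerate}[label=\textup{(\roman*)}]
\item The graph is the $7$-cycle $u\,a\,b\,v\,e\,d\,c\,u$ plus the chord $uv$. Its independence number is at most $3$, so no maximal matching has size $1$. You did not address this case.
\item The independent $3$-sets are the seven sets $\{i,i+2,i+4\}$ of the $7$-cycle, minus $\{u,v,d\}$, which the chord kills. Each of the remaining six isolates a vertex of its complement. For example, $\{u,b,e\}$ isolates $a$, and $\{a,v,d\}$ isolates $b$. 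So no maximal matching has size $2$.
\item Every size-$3$ matching leaves exactly one vertex uncovered. Summing $\pc(G-x)$ over $x=u,v,a,b,c,d,e$ gives $1+1+1+1+2+1+2=9$, which yields $9z^3$.
\end{enumerate}
With the $(1,2,5)$ witness and these checks filled in, your proof is complete.
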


\begin{proof}
By \cref{lem:A-independent-three}, the order is at most $7$. For order $5$, $a+b+c=6$, and the only admissible triples are $(1,2,3)$ and $(2,2,2)$. Directly, $\Th(1,2,3)$ has six maximal matchings, all of size $2$, and $\Th(2,2,2)=K_{2,3}$ has the six matchings $\{ux_i,vx_j\}$ with $i\ne j$.

For order $7$, the possible triples are
\[
 (1,2,5),\ (1,3,4),\ (2,2,4),\ (2,3,3).
\]
In each of the first, third, and fourth graphs, one can delete three independent vertices so that the residual graph has a perfect matching of size $2$; hence the graph has a nonmaximum maximal matching. For $\Th(1,3,4)$, write the nontrivial paths as $uabv$ and $ucdev$. Sorting by the edge that covers $u$ gives exactly nine maximal matchings:
\[
\begin{gathered}
\{uv,ab,cd\},\ \{uv,ab,de\},\
\{ua,bv,cd\},\ \{ua,bv,de\},\ \{ua,ev,cd\},\\
\{uc,bv,de\},\ \{uc,ev,ab\},\
\{uc,ab,de\},\ \{ev,ab,cd\}.
\end{gathered}
\]
All have size $3$.
\end{proof}

\begin{lemma}\label{lem:A-reduced-odd}
Let $G\in\Aclass_{2k+1}$ be reduced and suppose every maximal matching has size $k$. Then $G$ is isomorphic to one of $\Th(1,2,3)$, $\Th(2,2,2)$, $\Th(1,3,4)$, or the graph obtained from $\Th(1,2,2)$ by attaching one leaf to an internal vertex of a length-$2$ path.
\end{lemma}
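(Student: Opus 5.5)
By \cref{lem:odd-support}, every support vertex of $G$ has exactly one leaf neighbor. Since $G$ is reduced, no support vertex outside the core has degree $2$. So \cref{lem:farthest} shows that no support vertex lies outside the core at all. Hence every non-core vertex is a leaf adjacent to a core vertex, and each core vertex carries at most one leaf. Writing $T=\Th(a,b,c)$ for the core and $L\subseteq V(T)$ for the set of core vertices carrying a leaf, $G$ is $T$ with one pendant edge $wy_w$ at each $w\in L$. If $L=\emptyset$, then $G=T$ has odd order, and \cref{lem:A-pure-odd} gives exactly $\Th(1,2,3)$, $\Th(2,2,2)$ and $\Th(1,3,4)$. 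It remains to show that $L\neq\emptyset$ forces the order-$5$ exceptional graph.

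Suppose $L\ne\emptyset$. The basic tool is the criterion that a matching is maximal if and only if its uncovered vertices form an independent set. We exhibit a maximal matching with at least three uncovered vertices: an independent set $S$ with $|S|=3$ (or any odd $|S|\ge3$) such that $G-S$ has a perfect matching. Fix $w\in L$ and put $y=y_w\in S$. Then $w$ must be matched into the core, to a neighbor $x$. The remaining two vertices of $S$ are core vertices (or leaves $y_{w'}$ whose supports are matched inside the core), chosen nonadjacent to each other and to $y$. Two reductions help here.
\begin{enumerate}[label=\textup{(\alph*)}]
\item Matchings containing $wy$ are exactly $\{wy\}\cup N$ with $N$ maximal in $G-\{w,y\}$. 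Hence $G-\{w,y\}$, which is $T-w$ with the other leaves, must itself have all maximal matchings of size $k-1$.
\item Removing the edge $wx$ together with $w,y,x$ reduces the problem to the graph $T-\{w,x\}$ with its leaves, which is a tree or unicyclic graph.
\end{enumerate}
Since $T-w$ is a unicyclic graph or a tree (a cycle with two pendant paths, or a tree when $w$ is a branch vertex), constraint (a) can be handled with the same path-parity bookkeeping as in \cref{lem:A-independent-three}. Long even paths allow deleting vertices in pairs. Any core vertex of $L$ other than $w$ gives another leaf that can be left uncovered. So $|L|\ge2$ or a long core immediately yields an independent triple.

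The main obstacle is turning this into a finite check. First I would show $|L|=1$: with two leaves $y_w,y_{w'}$, leave both uncovered, match $w,w'$ along the core, and find one more uncovered core vertex. The parity of the core order is $2k+1-|L|$, so the core then has even order and the remaining vertices pair up along paths. Second, with $|L|=1$ the core has even order. I would show, following the case split on the parities of $a,b,c$ used in \cref{lem:A-independent-three}, that if the core has order at least $6$ there are two nonadjacent core vertices, both nonadjacent to $w$, whose deletion together with $w$ matched to a suitable neighbor leaves even paths. This bounds the order by $5$, so the core is $\Th(1,2,2)=K_4-e$ with one leaf. A direct check then shows that the leaf must sit at a degree-$2$ vertex, that is, an internal vertex of a length-$2$ path. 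If instead the leaf sits at a branch vertex $u$, the matching $\{uy,\,v x\}$ with $x$ a degree-$2$ vertex is maximal of size $2$ (all remaining uncovered vertices are independent), which is fine; so I would instead exhibit a matching of size $1$ to rule that case out. The matching $\{uv\}$ leaves $y$ and the two degree-$2$ vertices uncovered; these are independent, so it is maximal of size $1$. In the degree-$2$ case, all maximal matchings have size $2$, which we verify by enumeration.
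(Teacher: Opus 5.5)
Your opening reduction (no support vertex lies off the core, so $G$ is the theta core with at most one pendant edge at each vertex of a set $L$) is correct and matches the paper. So is the case $L=\emptyset$, and so is your final check on $K_4-e$.

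The gap is the middle of the argument. The two statements that carry the lemma are that $|L|\ge2$ is impossible and that $|L|=1$ forces a core of order $4$. Both are only announced (``I would show''), and the sketches you give for them do not hold up.
\begin{enumerate}[label=\textup{(\roman*)}]
\item For $|L|\ge2$ you only handle two leaves, and the parity remark is off: with $|L|=2$ the core has odd order $2k-1$. For $|L|\ge3$, leaving all leaves uncovered can be impossible. Take $K_{2,3}$ with leaves at one branch vertex and at all three degree-$2$ vertices (order $9$). No matching of the core covers the three degree-$2$ vertices, so the construction must change with the leaf configuration.
\item For $|L|=1$, the pair of uncovered core vertices ``both nonadjacent to $w$'' need not exist. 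Take $\Th(1,3,3)$ with paths $uabv$, $ucdv$ and the leaf at the branch vertex $u$. The only maximal matchings of size $2$ are $\{ua,dv\}$ and $\{uc,bv\}$, and each leaves a neighbor of $u$ uncovered.
\item The parity bookkeeping of \cref{lem:A-independent-three} does not transfer, since it concerns leafless thetas of order at least $9$.
\end{enumerate}
What remains is an unbounded case analysis over theta shapes and leaf sets, which the proposal does not carry out.

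The missing idea is to use the edges from $w$ to its core neighbors, not the pendant edge $wy$ of your reduction (a). Fix $w\in L$ with leaf $y$ and put $R=G-\{w,y\}$. For every core neighbor $z$ of $w$ and every maximal matching $N$ of $R-z$, the matching $\{wz\}\cup N$ is maximal in $G$. Hence $N$ must be perfect, so $R-z$ (of order $2k-2$) is randomly matchable. By \cref{lem:sparse-random}, each component of $R-z$ is then $K_2$ or $C_4$. The paper applies this constraint in two cases:
\begin{enumerate}[label=\textup{(\roman*)}]
\item to the three core neighbors when $w$ is a branch vertex, combined with \cref{lem:tree-two-deletions};
\item to the two core neighbors when $w$ is internal to a theta path.
\end{enumerate}
Together these exclude every further leaf and every core larger than $K_4-e$ at once, and force $R=C_3$.
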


\begin{proof}
By \cref{lem:odd-support}, every support vertex has exactly one leaf neighbor. If a leaf had a support vertex outside the core, then \cref{lem:farthest} would give a degree-$2$ support vertex outside the core, contrary to the definition of a reduced graph. Hence every leaf is attached directly to a core vertex. If there is no leaf, \cref{lem:A-pure-odd} applies.

Suppose that a leaf $y$ is attached to a core vertex $x$, and put $R=G-\{x,y\}$. For every core neighbor $z$ of $x$ and every maximal matching $N$ of $R-z$, the matching $N\cup\{xz\}$ is maximal in $G$. It therefore has size $k$, so $N$ is a perfect matching of the graph $R-z$, which has order $2k-2$. Thus $R-z$ is randomly matchable for every core neighbor $z$ of $x$. By \cref{lem:sparse-random}, every component of each such graph is $K_2$ or $C_4$.

First suppose that $x$ is a branch vertex of the theta core. Then $R$ is a tree and, for each of the three core neighbors $z$ of $x$, the graph $R-z$ is a disjoint union of copies of $K_2$. Applying \cref{lem:tree-two-deletions} to any two such neighbors would force $R\cong P_3$, which is incompatible with a third distinct neighbor having the same property. Hence $x$ is not a branch vertex.

Consequently, $x$ is internal to one theta path and has two core neighbors $z_1,z_2$. The graph $R$ is connected and unicyclic; its unique cycle is formed by the other two theta paths. If, say, $z_1$ is not on this cycle, then the component of $R-z_1$ containing the cycle must be isomorphic to $C_4$. This forces $z_2$ to lie on the cycle. But then $R-z_2$ contains the path obtained from that $C_4$ by deleting $z_2$, together with the nontrivial segment ending at $z_1$, and hence has a component that is neither $K_2$ nor $C_4$, a contradiction. Thus both $z_1$ and $z_2$ lie on the unique cycle, so the theta path containing $x$ has length $2$.

There can be no further leaf in $R$. Indeed, if its support vertex were $z_i$, then $R-z_i$ would contain an isolated vertex. Otherwise its pendant edge would survive in both $R-z_1$ and $R-z_2$ inside a component that is neither $K_2$ nor $C_4$. Both alternatives contradict \cref{lem:sparse-random}. Hence $R$ is a cycle. Since $R-z_i$ is then a path and is randomly matchable, it must be $K_2$. Therefore $R=C_3$, and $G$ is exactly the graph obtained from $\Th(1,2,2)$ by attaching one leaf to an internal vertex of a length-$2$ path.
\end{proof}

\begin{lemma}\label{lem:A-odd-vertices}
The vertices $w$ for which deleting $w$ from a reduced odd-order extremal graph is randomly matchable are as follows:
\begin{enumerate}[label=\textup{(\roman*)}]
\item in $\Th(1,2,3)$, only the internal vertex of the length-$2$ path;
\item in $\Th(2,2,2)$, precisely the three degree-$2$ vertices;
\item in $\Th(1,3,4)$ and in the graph obtained from $\Th(1,2,2)$ by attaching one leaf to an internal vertex of a length-$2$ path, no vertex.
\end{enumerate}
After at least one pendant $2$-path has been attached in the first two cases, the common attachment vertex is the unique vertex with this property.
\end{lemma}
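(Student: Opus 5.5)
The plan is to check, vertex by vertex in each of the four reduced graphs, whether $H-w$ is randomly matchable. Each $H-w$ has at most one cycle, so \cref{lem:sparse-random} applies: $H-w$ is randomly matchable exactly when every component is $K_2$ or $C_4$. Since $H-w$ has even order in every case, this reduces to a short inspection. Symmetries of the theta graphs cut the number of vertex classes to examine.

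For $\Th(1,2,3)$ we label the branch vertices $u,v$, the length-$2$ path $u a v$, and the length-$3$ path $u b c v$. Deleting $a$ leaves the $4$-cycle $u b c v u$, which qualifies. Deleting $u$ (and symmetrically $v$) leaves a tree on four vertices containing a vertex of degree $3$ or a $P_4$, so it fails. Deleting $b$ leaves a triangle $uva$ with the pendant $c$; deleting $c$ is symmetric. Hence $a$ is the only good vertex.

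For $K_{2,3}$, deleting a degree-$2$ vertex leaves $C_4$. Deleting a branch vertex leaves the star $K_{1,3}$, which fails.

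For $\Th(1,3,4)$, with paths $uv$, $uabv$, $ucdev$, we check the seven vertices, or fewer using the reflection swapping $u$ and $v$. Every deletion leaves a six-vertex graph with at most one cycle, and none of these cycles is a $4$-cycle: the cycles are the $4$-cycle $uabv$ and the $5$-cycle $ucdev$. Deleting a vertex of $ucdev$ other than $u$ or $v$ leaves $C_4$ plus a path attached to it, which is connected and not $C_4$. Deleting $a$ or $b$ leaves $C_5$ with a pendant vertex. Deleting $u$ or $v$ leaves a tree on six vertices that is not a perfect matching of $K_2$'s. So no vertex qualifies.

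For the five-vertex exceptional graph, $\Th(1,2,2)$ with vertices $u,v,a,b$ and a leaf $y$ at $a$, we compute directly. Deleting $y$ gives $K_4-e$. Deleting $a$ isolates $y$. Deleting $u$, $v$, or $b$ leaves a connected four-vertex graph in which $y$ is a leaf attached to a vertex of degree at least $2$, so it is neither $K_2$ nor $C_4$.

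For the final assertion, let $G$ be obtained from $\Th(1,2,3)$ or $K_{2,3}$ by attaching $r\ge1$ pendant $2$-paths at the common vertex $a$. First, $G-a$ is the $4$-cycle (from the core minus $a$) together with $r$ copies of $K_2$, so $a$ qualifies. Now take $w\ne a$.

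If $w$ lies on a pendant path $a s t$, then either $t$ becomes isolated (when $w=s$), or $a$ still carries another pendant path or core neighbours, so its component is neither $K_2$ nor $C_4$. In particular, deleting $t$ leaves $s$ attached to $a$, whose component has more than two vertices and is not a cycle.

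If $w$ is a core vertex other than $a$, then $a$ survives. Its component contains the pendant paths at $a$, which are trees hanging off a non-leaf, so the component is neither $K_2$ nor $C_4$.

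The main care needed is in this last step: we must make sure that the component containing $a$ genuinely has a vertex of degree at least $3$ or is a longer path. This holds because $a$ keeps at least one core neighbour (it has two) plus a pendant path when $w$ is a core vertex, and keeps its core neighbours when $w$ is on a pendant path.
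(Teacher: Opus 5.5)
Your proposal is essentially the paper's approach: a direct inspection of one representative from each vertex orbit, followed by a short argument for $\AU_r,\AV_r$ with $r\ge1$. All of your case conclusions are correct, but one justification needs repair.

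Your opening claim that every $H-w$ has at most one cycle is false when $w$ is a leaf. Deleting $y$ from the order-$5$ exceptional graph leaves $K_4-e$. Deleting the end $t$ of a pendant $2$-path $ast$ from $\AU_r$ or $\AV_r$ leaves a connected graph with $n-1$ vertices and $n$ edges. Both have cyclomatic number $2$, so \cref{lem:sparse-random} does not apply. In these cases ``neither $K_2$ nor $C_4$'' is not by itself a reason: $K_4$ and $K_{3,3}$ are randomly matchable. For $K_4-e$ you give no reason at all.

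Both cases are fixed in one line each by exhibiting a nonperfect maximal matching, as the paper does:
\begin{itemize}
\item $\{uv\}$ is maximal in $K_4-e$, because its uncovered vertices $a,b$ are nonadjacent;
\item in $G-t$, any maximal matching containing an edge from $a$ to a core neighbour leaves the leaf $s$ uncovered.
\end{itemize}
Alternatively, invoke Sumner's full theorem. A connected randomly matchable graph on more than two vertices is $K_{2m}$ or $K_{m,m}$ with $m\ge2$, so it has no leaf and is not $K_4-e$.

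There is also a wording slip for $\Th(1,3,4)$. The sentence ``none of these cycles is a $4$-cycle: the cycles are the $4$-cycle $uabv$ \dots'' contradicts itself. What you need is that no deletion leaves a component isomorphic to $C_4$ alone, and your subsequent case check does show exactly that.
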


\begin{proof}
Deleting the stated vertex from either $\Th(1,2,3)$ or $\Th(2,2,2)$ leaves $C_4$. For every other vertex in these two graphs, the deletion either has no perfect matching or has maximal matchings of two different sizes. In $\Th(1,3,4)$ and in the graph obtained from $\Th(1,2,2)$ by attaching one leaf to an internal vertex of a length-$2$ path, checking one representative from each vertex orbit gives the same conclusion. After pendant $2$-paths are attached at $w$, deleting $w$ leaves $C_4$ together with isolated $K_2$ components, whereas deleting a support vertex isolates its leaf and deleting any other vertex leaves a nonperfect maximal matching involving an attachment edge.
\end{proof}

\begin{theorem}\label{thm:A-odd}
Let $G\in\Aclass_n$, where $n\ge5$ is odd. Then
\[
 \avm(G)\le\frac{n-1}{2}.
\]
Equality holds if and only if
\[
 G\cong\AU_{(n-5)/2},\qquad G\cong\AV_{(n-5)/2},
\]
or $n=5$ and $G$ is obtained from $\Th(1,2,2)$ by attaching one leaf to an internal vertex of a length-$2$ path, or $n=7$ and $G\cong\Th(1,3,4)$.
\end{theorem}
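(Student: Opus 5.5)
The bound itself is immediate: for $n=2k+1$ every matching has at most $k$ edges, so $\avm(G)\le k=(n-1)/2$. Equality holds exactly when every maximal matching of $G$ has size $k$. The real task is to classify these equimatchable graphs in $\Aclass_n$, and I would do this by induction on $n$ using the reduction by pendant $2$-paths.

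Let $G$ be extremal. By \cref{lem:odd-support}, every support vertex of $G$ has exactly one leaf neighbor.

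\emph{Reduced case.} If $G$ is reduced, \cref{lem:A-reduced-odd} says $G$ is one of four graphs: $\Th(1,2,3)=\AU_0$, $\Th(2,2,2)=\AV_0$, $\Th(1,3,4)$ (order $7$), or the order-$5$ exceptional graph. All four are extremal, by \cref{lem:A-pure-odd} and a direct check for the last one.

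\emph{Non-reduced case.} Otherwise some degree-$2$ support vertex $s$ lies outside the core, with leaf $t$ and other neighbor $w$. Then $G$ is obtained from $H=G-\{s,t\}$ by attaching the pendant $2$-path $wst$. The graph $H$ is still in $\Aclass$ and has odd order $n-2$; note $n-2\ge5$, since the core already has at least $5$ vertices when $n$ is odd and a tree is attached.

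Next I would check that $H$ is itself extremal. Every maximal matching $M$ of $H$ gives the maximal matching $M\cup\{st\}$ of $G$ (\cref{lem:P2}). That matching has size $k$, so $|M|=k-1$. By \cref{cor:odd-extension}, $G$ is then extremal exactly when $H-w$ is randomly matchable. By induction $H$ lies on the list, and the question becomes: for which $w\in V(H)$ is $H-w$ randomly matchable?

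\cref{lem:A-odd-vertices} answers this.
\begin{itemize}
\item $H$ cannot be $\Th(1,3,4)$ or the order-$5$ exceptional graph, since these have no such vertex.
\item If $H=\AU_0$ or $\AV_0$, the vertex $w$ is the internal vertex of the length-$2$ path, or one of the three degree-$2$ vertices of $K_{2,3}$ (all equivalent under symmetry). This gives $\AU_1$ and $\AV_1$.
\item If $H=\AU_r$ or $\AV_r$ with $r\ge1$, $w$ is forced to be the common attachment vertex. This gives $\AU_{r+1}$ and $\AV_{r+1}$.
\end{itemize}
The induction therefore closes, and the families $\AU_{(n-5)/2}$ and $\AV_{(n-5)/2}$ are exactly the non-reduced extremal graphs.

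For the converse, each listed graph is extremal. The base graphs were checked above, and \cref{cor:odd-extension} applies at each attachment step because $H-w$ is $C_4$ together with copies of $K_2$, which is randomly matchable by \cref{lem:sparse-random}.

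The main obstacle is the induction step's reliance on \cref{lem:A-odd-vertices}, in particular its final assertion that after attachments the attachment vertex is the unique valid $w$. I would verify it directly as follows.
\begin{itemize}
\item Deleting a support vertex isolates its leaf, so the result is not randomly matchable.
\item Deleting a leaf or a core vertex other than $w$ leaves a component containing $w$ together with a pendant $2$-path and part of the core. That component is neither $K_2$ nor $C_4$, which contradicts \cref{lem:sparse-random}.
\end{itemize}
One also has to rule out the possibility that the reduction removes a pendant $2$-path not attached at a core vertex. This cannot happen: in every graph on the list the only valid $w$ lies in the core, so any extremal $G$ has all its pendant $2$-paths hanging on core vertices.
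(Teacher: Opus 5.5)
Your proposal is correct and is essentially the paper's proof. You peel off a pendant $2$-path $wst$, use \cref{lem:P2,cor:odd-extension} to see that $H=G-\{s,t\}$ is again extremal and that $H-w$ must be randomly matchable, and then combine \cref{lem:A-reduced-odd} with \cref{lem:A-odd-vertices}; your induction on $n$ is a one-step-at-a-time version of the paper's repeated deletion down to a reduced graph.

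There are two harmless slips. First, the core $\Th(1,2,2)$ has only $4$ vertices, so $n-2\ge5$ follows from $H$ having odd order and containing the core, not from the core having $5$ vertices. Second, in your re-check of \cref{lem:A-odd-vertices}, deleting a leaf $t$ leaves the whole bicyclic core intact, so \cref{lem:sparse-random} does not apply there; argue instead that matching $w$ into the core leaves $s$ (now a leaf at $w$) uncovered.
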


\begin{proof}
Only equality needs proof. By \cref{lem:odd-support,lem:farthest}, repeatedly delete a degree-$2$ support vertex outside the core together with its leaf. By \cref{lem:P2,cor:odd-extension}, the smaller graph remains an odd-order extremal graph, and at each reverse step the attachment vertex must have a randomly matchable deletion. The process terminates at a reduced graph, which is one of the four graphs in \cref{lem:A-reduced-odd}. The admissible vertices in \cref{lem:A-odd-vertices} give exactly $\AU_r,\AV_r$, while the two exceptional graphs of orders $5$ and $7$ cannot be extended. Conversely, repeated use of \cref{cor:odd-extension} proves that every listed graph is equimatchable with maximum size $(n-1)/2$.
\end{proof}

\subsection{Even order}

\begin{lemma}\label{lem:A-three-pm}
A graph in $\Aclass_{2k}$ has at most three perfect matchings. If it has three, the three pairwise symmetric differences are exactly the three cycles of the theta core, and the perfect matchings coincide outside the core.
\end{lemma}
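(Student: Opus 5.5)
The plan is to use the standard fact that the symmetric difference of two perfect matchings is a disjoint union of alternating cycles of even length. Let $G\in\Aclass_{2k}$ with core $\Th(a,b,c)$ and branch vertices $u,v$. Every cycle of $G$ lies in the core, and the core contains exactly three cycles: $C_1=P_a\cup P_b$, $C_2=P_a\cup P_c$, $C_3=P_b\cup P_c$. Any two of these share the path between $u$ and $v$, hence share vertices. So if $M\neq M'$ are perfect matchings, $M\triangle M'$ is a single cycle, one of $C_1,C_2,C_3$, and that cycle must be $M$-alternating. The argument then reduces to showing that each of the three cycles can be the symmetric difference for at most one pair, and that this structure caps the count at three.

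First fix a perfect matching $M$. Every other perfect matching has the form $M\triangle C_i$ for some $C_i$ that is $M$-alternating, so $\pc(G)\le 1+\#\{i: C_i \text{ is } M\text{-alternating}\}\le 4$. The key step is to rule out all three cycles being $M$-alternating at once. If $C_1$ and $C_2$ are both $M$-alternating, then every vertex of $P_a$, $P_b$, and $P_c$ is matched within its cycle. Look at $u$. It is matched along one of the three paths. If it is matched along $P_a$, then $C_3=P_b\cup P_c$ contains $u$ but not its $M$-partner, so $C_3$ is not alternating. If $u$ is matched along $P_b$, then $C_2$ cannot be alternating, since it contains $u$ without its partner edge, which is a contradiction. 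The case $P_c$ is symmetric. So at most two cycles are $M$-alternating, and $\pc(G)\le 3$.

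If $\pc(G)=3$, write the perfect matchings as $M, M\triangle C_i, M\triangle C_j$. Their third pairwise difference is $C_i\triangle C_j$, which equals the remaining theta cycle. So the three symmetric differences are exactly the three cycles. Since every symmetric difference lies in the core, all three matchings agree on every edge outside the core. The main obstacle is getting the case analysis at the branch vertex $u$ right, specifically confirming that an alternating cycle through $u$ must use $u$'s matched edge. This holds because an alternating cycle covers each of its vertices with an $M$-edge of the cycle.
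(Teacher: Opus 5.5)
Your proof is correct and follows essentially the same route as the paper: each pair of perfect matchings differs on a single alternating core cycle, the $M$-edge at the branch vertex $u$ lies on only one theta path so at most two core cycles are $M$-alternating, and $C_i\triangle C_j$ is the third cycle. One wording fix: when $a=1$ the $M$-partner $v$ of $u$ does lie on $C_3$, so say that $C_3$ misses $u$'s $M$-edge rather than its $M$-partner, which is what your closing remark actually uses.
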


\begin{proof}
Fix a perfect matching $M$. The symmetric difference with another perfect matching is a disjoint union of alternating cycles. Since all cycles of a theta graph pairwise intersect, only one core cycle can occur. At a branch vertex, $M$ uses at most one of the three first path edges, so at most two of the three core cycles can be alternating relative to $M$. Thus there are at most two further perfect matchings. If both exist, their symmetric difference is the third core cycle, and no edge outside the core can differ.
\end{proof}

\begin{proposition}\label{prop:A-two-pm}
If $G\in\Aclass_{2k}$ has exactly two perfect matchings, then
\[
 m(G)\ge2k-1.
\]
\end{proposition}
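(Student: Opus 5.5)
The plan is a counting argument built on the switching families of \cref{lem:switch}. Let $M_1,M_2$ be the two perfect matchings. By the proof of \cref{lem:A-three-pm}, $M_1\triangle M_2$ is a single cycle $C$ of the theta core, and $M_1,M_2$ agree outside $C$. Since $G$ has $2k+1$ edges, each $M_i$ leaves $k+1$ edges $e\notin M_i$. By \cref{lem:switch}, each such $e=xy$ gives a matching $(M_i)_e$ of size $k-1$. This matching is maximal unless $x'y'\in E(G)$, and in that case $x'y'$ completes it to another perfect matching. That other perfect matching must be $M_{3-i}$, because there are only two.

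\emph{Step 1: count the maximal switches of one perfect matching.} I will show that a nonmaximal switch of $M_i$ occurs only when $x,x',y',y$ span an alternating $4$-cycle $xyy'x'$ with $\{xy,x'y'\}\subseteq M_{3-i}$. Hence $M_i\triangle M_{3-i}$ is that $4$-cycle, so $C$ is a $4$-cycle. Such an $e$ must be one of the two edges of $M_{3-i}\setminus M_i$.
\begin{itemize}
\item If $|C|\ge 6$, every one of the $k+1$ switches is maximal, so $|\mathcal S(M_i)|=k+1$.
\item If $|C|=4$, exactly two switches fail, so $|\mathcal S(M_i)|=k-1$.
\end{itemize}
Distinct edges $e$ give distinct $(M_i)_e$, since $e$ is the unique edge of $(M_i)_e$ outside $M_i$.

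\emph{Step 2: bound the overlap and add up.} I then apply \cref{lem:switch-distance} to bound $|\mathcal S(M_1)\cap\mathcal S(M_2)|$.
\begin{itemize}
\item If $|C|=4$, part (ii) gives overlap at most $1$. In fact the overlap is $0$ unless $C$ has a chord. A chord of the $4$-cycle $C$ would be the third theta path of length $1$, so the core is $\Th(1,3,3)$ or $\Th(1,2,2)$-like. Here I must be careful: the switches along the chord were already counted as nonmaximal or not.
\item If $|C|=6$, part (i) gives overlap at most $3$.
\item If $|C|\ge 8$, the overlap is $0$.
\end{itemize}
With the two perfect matchings added in, this gives
\[
m(G)\ge 2+|\mathcal S(M_1)|+|\mathcal S(M_2)|-|\mathcal S(M_1)\cap\mathcal S(M_2)|.
\]
The cases then give:
\begin{itemize}
\item $|C|\ge8$: $2+2(k+1)=2k+4$;
\item $|C|=6$: $2+2(k+1)-3=2k+1$;
\item $|C|=4$: $2+2(k-1)-1=2k-1$.
\end{itemize}
Each is at least $2k-1$, which is the claim.

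\emph{Main obstacle.} The delicate point is Step 1 in the $4$-cycle case. I have to verify that the only failing switches are the two edges of $M_{3-i}\setminus M_i$. For such an $e$, say $e=xy$ on $C$ with $C=x x' y' y$, the matched partners $x',y'$ are adjacent via the other $M_{3-i}$ edge, so the switch is indeed nonmaximal. For any other $e$, the completion $x'y'$ would give a third perfect matching, which is a contradiction. A chord of $C$ is not in $M_{3-i}\setminus M_i$, so its switch is maximal and counted. Consistency with \cref{lem:switch-distance}(ii) then caps the double counting at one, which is what the final bound uses.
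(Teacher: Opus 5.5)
Your proof is correct and follows essentially the same route as the paper. Both count the $k+1$ switches from each perfect matching and discard the two nonmaximal ones exactly when the alternating cycle is a $4$-cycle. Both then subtract the overlap bounds $1$, $3$, $0$ from \cref{lem:switch-distance} for cycle lengths $4$, $6$, $\ge 8$, which gives $m(G)\ge 2k-1$, $2k+1$, $2k+4$ respectively.

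One small slip, in an aside that carries no weight: a chord of the alternating $4$-cycle forces the core to be $\Th(1,2,2)$, since the $4$-cycles of $\Th(1,3,3)$ are chordless. Your final count only uses the overlap bound of one, so nothing changes.
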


\begin{proof}
Let $M,M'$ be the perfect matchings. Their symmetric difference is an even core cycle. Each has $k+1$ nonmatching edges. If the cycle is a $4$-cycle, each side has two nonmaximal switches, and \cref{lem:switch-distance} shows that the two switching families intersect in at most one matching; hence they yield at least $2(k+1)-4-1=2k-3$ nonperfect maximal matchings. If the cycle is a $6$-cycle, all switches are maximal and \cref{lem:switch-distance} gives at most three overlaps, yielding $2k-1$ nonperfect maximal matchings. For a longer cycle, the switching families are disjoint by \cref{lem:switch-distance}. Adding the two perfect matchings gives the assertion.
\end{proof}

\begin{lemma}\label{lem:A-three-paths}
If a pure theta graph $T=\Th(a,b,c)$ has three perfect matchings, then $a,b,c$ are odd.
\end{lemma}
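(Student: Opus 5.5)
Let the three internally disjoint $u$--$v$ paths be $P_a,P_b,P_c$. By \cref{lem:A-three-pm}, the three perfect matchings $M_1,M_2,M_3$ pairwise differ exactly on the three cycles of the theta core. In particular, each core cycle $P_a\cup P_b$, $P_a\cup P_c$, $P_b\cup P_c$ is $M$-alternating for a suitable one of the three perfect matchings. The plan is to read off the parity of each path from how its edges alternate.

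First look at the branch vertex $u$. Each perfect matching covers $u$ by exactly one edge, and that edge is the first edge of one of the three paths. If two of the matchings used the first edge of the same path, say $P_a$, at $u$, then their symmetric difference (a core cycle through $u$) would not contain that edge. Hence it would be the cycle $P_b\cup P_c$, and both matchings would use neither the first edge of $P_b$ nor that of $P_c$. This is consistent, so I instead argue via the symmetric differences. The edges at $u$ lying in $M_i\triangle M_j$ are precisely the $M_i$-edge and the $M_j$-edge at $u$, and $M_i\triangle M_j$ contains exactly two edges at $u$. Since $M_1\triangle M_2$, $M_1\triangle M_3$, $M_2\triangle M_3$ are the three distinct core cycles, the three matchings must use three distinct first edges at $u$. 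So, after relabeling, $M_1,M_2,M_3$ cover $u$ via $P_a,P_b,P_c$ respectively. The same holds at $v$.

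Next consider a single path, say $P_a$, on the cycle $M_1\triangle M_2=P_a\cup P_b$. This cycle alternates between $M_1$ and $M_2$. The first edge of $P_a$ at $u$ lies in $M_1$, and the first edge of $P_b$ at $u$ lies in $M_2$. Walking along $P_a$ from $u$ to $v$ with alternating edges, the edges of $P_a$ are $M_1,M_2,M_1,\dots$. At $v$ the cycle continues onto $P_b$ traversed backwards and must end with an $M_2$-edge at $u$. The $P_a$-edge at $v$ and the $P_b$-edge at $v$ must belong to different matchings, because the alternating cycle passes through $v$ using one edge of each. Moreover, the $P_a$-edge at $v$ is the one covering $v$ in $M_1$ or $M_2$.

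Using the cycle $M_1\triangle M_3=P_a\cup P_c$, the same reasoning shows that $M_1$ covers $v$ through $P_a$ or $P_c$. Combining this with the distinctness at $v$ is where one must be careful. I would argue more simply: $M_1$ restricted to the internal vertices of $P_b$ and $P_c$ must perfectly match them, since $u$ is matched into $P_a$. So if $M_1$ covers $v$ through $P_a$, then $P_b$ and $P_c$ have an even number of internal vertices, making $b,c$ odd. Otherwise $v$ is matched into, say, $P_b$, and then the internal vertices of $P_c$ are matched among themselves (so $c$ is odd), while those of $P_b$ minus one are matched (so $b$ is even).

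The main obstacle is excluding the latter, twisted configuration. To do so I would apply the same count to $M_2$ and $M_3$, each of which leaves a path whose internal vertices must be matched internally. The distinct-first-edge property at both $u$ and $v$ forces the assignments $u\mapsto$ path and $v\mapsto$ path to be permutations $\sigma,\tau$ of $\{a,b,c\}$. A path whose internal vertices are matched internally has odd length. A path used at exactly one end by some $M_i$ has even length, and a path used at both ends by the same $M_i$ has odd length. Consistency of each path's parity across all three matchings then forces $\sigma=\tau$: if $\sigma\neq\tau$, some path would be used at one end only by one matching and be untouched by another, giving both parities. Hence each $M_i$ covers $u$ and $v$ through the same path, and all of $a,b,c$ are odd.
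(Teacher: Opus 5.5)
Your proof is correct, but it takes a considerably longer route than the paper's. The paper needs only parity. Each core cycle is $M_i\triangle M_j$ for two of the perfect matchings, hence an alternating cycle of even length. So $a+b$, $a+c$, $b+c$ are even, which means $a,b,c$ share a parity. Since $a+b+c-1=|V(T)|$ is even, that common parity is odd.

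You actually had this in hand when you noted that $P_a\cup P_b$ alternates between $M_1$ and $M_2$; that observation alone finishes the lemma.

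Your route instead shows that the three matchings use three distinct edges at each branch vertex. This step is correct: every cycle of a theta graph passes through both branch vertices, so $M_i\triangle M_j$ contains exactly the $M_i$- and $M_j$-edges at $u$ and at $v$. You then use interior-vertex counts to force $\sigma=\tau$. Both the transposition case and the $3$-cycle case of $\sigma\ne\tau$ do lead to a parity clash, and the short lengths $1$ and $2$ cause no trouble. The payoff is extra structure that the lemma does not need: each $M_i$ matches $u$ and $v$ into the same path $P_{\sigma(i)}$, and matches the interiors of the other two paths internally.

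For a clean write-up, drop the false starts. In particular, the configuration you call ``consistent'' in your second paragraph is actually impossible. The alternating cycle $P_b\cup P_c$ passes through $u$, so it would have to contain an edge of $M_i$ or $M_j$ there.
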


\begin{proof}
The three pairwise symmetric differences are the three cycles, so $a+b,a+c,b+c$ are even. Thus $a,b,c$ have the same parity. Since $|V(T)|=a+b+c-1$ is even, their common parity is odd.
\end{proof}

\begin{lemma}\label{lem:A-pure-three}
Let $T=\Th(a,b,c)$ have $2k$ vertices and three perfect matchings. Then
\[
 m(T)\ge3k-4,
\]
with equality if and only if $T\cong\Th(1,3,3)$.
\end{lemma}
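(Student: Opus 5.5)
By \cref{lem:A-three-paths}, $a,b,c$ are all odd. The plan is to count nonperfect maximal matchings through the switching families of the three perfect matchings $M_1,M_2,M_3$. Since $|E(T)|=a+b+c=2k+1$, each $M_i$ has exactly $k+1$ nonmatching edges. Hence each $M_i$ has $k+1$ candidate switches $M_e$ of size $k-1$, and distinct edges $e$ give distinct uncovered pairs, hence distinct matchings. By \cref{lem:switch}, a switch fails to be maximal exactly when $x'y'\in E(T)$. In that case $M_e\cup\{x'y'\}$ is another perfect matching, so $x,x',y',y$ span an alternating $4$-cycle relative to $M_i$. By \cref{lem:A-three-pm}, the only alternating cycles are the three core cycles, of lengths $a+b$, $a+c$, $b+c$. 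Thus $M_i$ has a nonmaximal switch only if one of the two core cycles through which it alternates has length $4$. Each such $4$-cycle costs $M_i$ at most two switches, namely the two edges of the cycle not in $M_i$.

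Next I would bound the overlaps between the families using \cref{lem:switch-distance}, applied to each pair $\{M_i,M_j\}$ with $M_i\triangle M_j$ the corresponding core cycle. If the cycle has length $4$, there are no common members, since in a theta graph with all paths odd and of length at least~$1$, a $4$-cycle formed by paths of lengths $1$ and $3$ is chordless. If the cycle has length $6$, there are at most three common members. If it has length at least $8$, there are none. Inclusion–exclusion, adding the three perfect matchings themselves, gives
\[
m(T)\ge 3+3(k+1)-N-O,
\]
where $N$ is the total number of nonmaximal switches and $O$ is the number of overlaps. A member common to all three families is counted at most twice in $O$ by the pairwise bounds, so it needs no separate correction. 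The target $m(T)\ge 3k-4$ therefore reduces to showing $N+O\le 10$, with equality only for $\Th(1,3,3)$.

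The case split is by the multiset of cycle lengths. Since $a,b,c$ are odd, at most one of them equals $1$, and a $4$-cycle arises only from a pair of lengths $\{1,3\}$. A $6$-cycle arises from a pair $\{1,5\}$ or $\{3,3\}$.
\begin{enumerate}[label=\textup{(\arabic*)}]
\item If $(a,b,c)=(1,3,3)$, the cycles have lengths $4,4,6$, giving $N\le 8$ and $O\le 3$. I would check this graph directly ($k=3$, five maximal matchings) to confirm equality $m(T)=5=3k-4$.
\item If exactly one cycle has length $4$ or $6$, the deficit is small: at most $4$ from nonmaximal switches, or at most $3$ from overlaps. This leaves strict inequality.
\item If no cycle has length $4$ or $6$, the three families are pairwise disjoint with all switches maximal, so $m(T)\ge 3k+6$.
\end{enumerate}
Configurations such as $(1,3,5)$, $(1,5,5)$, $(3,3,5)$ and $(3,3,3)$ fall into cases (2) or (3) after listing their cycle lengths, and a small order may need a direct check.

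The main obstacle will be the precise accounting in the tight small cases, especially $(3,3,3)$ and $(1,3,5)$. The crude bounds ``two nonmaximal switches per $4$-cycle'' and ``three overlaps per $6$-cycle'' may come close to the threshold $10$, or may not be simultaneously attainable. There I would first try to show that the overlap bound in \cref{lem:switch-distance}(i) is not attained, by checking which of the three retained edge pairs on the alternating $6$-cycle actually leave an independent uncovered pair. Failing that, I would simply enumerate the maximal matchings of the few small theta graphs by hand, in the manner of the proof of \cref{lem:A-pure-odd}, to certify strictness.
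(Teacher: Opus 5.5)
Your approach is the paper's: count the $3(k+1)$ switches, lose four nonmaximal switches for each pair of perfect matchings differing on a (chordless) $4$-cycle and at most three overlaps for each pair differing on a $6$-cycle, and check $\Th(1,3,3)$ by hand.

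One correction is needed in your case split. The triples $(1,3,5)$, $(1,5,5)$ and $(3,3,3)$ have cycle lengths $4,6,8$, then $6,6,10$, then $6,6,6$. So they lie in neither case (2) nor case (3), contrary to what you assert.

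They need no refined accounting or enumeration, however. Split on the number $t_4$ of core $4$-cycles instead, as the paper does:
\begin{enumerate}[label=\textup{(\alph*)}]
\item A $4$-cycle needs the pair of lengths $\{1,3\}$, and at most one length is $1$, so $t_4\ge2$ forces $(1,3,3)$.
\item If $t_4=1$, the third length is an odd $c\ge5$. The other cycles then have lengths $1+c\ge6$ and $3+c\ge8$, so $N+O\le4+3=7$.
\item If $t_4=0$, then $N=0$ and $O\le9$.
\end{enumerate}
Thus $N+O\le9$ for every $T$ other than $\Th(1,3,3)$, which gives $m(T)\ge3k-3$. The tight cases you worried about, including $(3,3,3)$ and $(1,3,5)$, are already settled by your crude bounds.

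A minor point: your remark that a member common to all three switching families is counted at most twice is inaccurate. In $\Th(3,3,3)$, the matching formed by the three middle path edges is a switch of all three perfect matchings. This is harmless, because, writing $A_i=\mathcal S(M_i)$, the bound $|A_1\cup A_2\cup A_3|\ge\sum_i|A_i|-\sum_{i<j}|A_i\cap A_j|$ holds unconditionally.
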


\begin{proof}
Let the perfect matchings be $M_1,M_2,M_3$. Counting each switch together with its originating perfect matching gives $3(k+1)$ ordered switches. A pair whose symmetric difference is a $4$-cycle accounts for four nonmaximal switches. A pair differing on a $6$-cycle can create at most three overlaps; pairs differing on longer cycles create no overlap. Let $t_4,t_6$ be the number of pairs with differences of lengths $4,6$. Then the number of distinct nonperfect maximal matchings obtained by switching is at least
\[
 3(k+1)-4t_4-3t_6.
\]
By \cref{lem:A-three-paths}, the path lengths are odd. If one pair differs on a $4$-cycle, two path lengths are $1$ and $3$, and simplicity prevents a second length-$1$ path; unless $(a,b,c)=(1,3,3)$, at most one further pair differs on a $6$-cycle. If no pair differs on a $4$-cycle, then $t_6\le3$. Both cases give strictly more than $3k-7$ nonperfect maximal matchings, except for $\Th(1,3,3)$.

For $\Th(1,3,3)$, writing the long paths as $uabv$ and $ucdv$, the three perfect matchings are
\[
 \{uv,ab,cd\},\quad \{ua,bv,cd\},\quad \{uc,dv,ab\},
\]
and the only other maximal matchings are $\{ua,dv\}$ and $\{uc,bv\}$. Hence $m=5=3k-4$.
\end{proof}

\begin{proposition}\label{prop:A-three-all}
If $G\in\Aclass_{2k}$ has three perfect matchings, then
\[
 m(G)\ge3k-4,
\]
with equality if and only if $G\cong\AW_{k-3}$.
\end{proposition}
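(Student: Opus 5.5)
We want: $G\in\Aclass_{2k}$ with three perfect matchings satisfies $m(G)\ge 3k-4$, with equality iff $G\cong\AW_{k-3}$. The plan is to repeat the switching count of \cref{lem:A-pure-three} on all of $G$ rather than only on its core.

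\textbf{Structure of $G$.} By \cref{lem:A-three-pm}, the three perfect matchings $M_1,M_2,M_3$ agree outside the core $T=\Th(a,b,c)$, and their pairwise symmetric differences are the three core cycles. So $T$ together with the restrictions $M_i\cap E(T)$ behaves like a pure theta graph with three perfect matchings. By \cref{lem:A-three-paths}, $a,b,c$ are odd. The common part $M_0=M_i\setminus E(T)$ is a perfect matching of the forest $G-V(T)$; every tree hanging off the core therefore has a perfect matching, and attachments are made at core vertices through non-matching edges.

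\textbf{Switching count.} There are $3(k+1)$ ordered switches $(M_i,e)$ with $e\notin M_i$. A switch is non-maximal only when $x'y'\in E(G)$ (\cref{lem:switch}), which produces another perfect matching. Such a switch must therefore lie inside a $4$-cycle difference, and there are at most four such switches per $4$-cycle pair. Coincidences between different $M_i$ are controlled by \cref{lem:switch-distance}. This gives
\[
m(G)\ge 3+3(k+1)-4t_4-3t_6-(\text{overlaps within one family}).
\]
Within one family, distinct edges $e$ give distinct $M_e$, since $M_e$ determines $e$ as its unique edge not in $M$. Hence the bound is $3k+6-4t_4-3t_6$. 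As in \cref{lem:A-pure-three}, the odd lengths force either $t_4\le1$ and $t_6\le1$ (giving $m\ge 3k-1$), or $t_4=0$ and $t_6\le3$ (giving $m\ge 3k-3$), or $(a,b,c)=(1,3,3)$ with $t_4=2$ and $t_6=1$. Only the last case can reach $3k-4$. There the bound is $3k+6-8-3=3k-5$, so the crude count is one short; this discrepancy is the main obstacle.

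\textbf{Refined count for $(a,b,c)=(1,3,3)$.} In this case the core is $\Th(1,3,3)$, and the core switches contribute exactly the two matchings $\{ua,dv\}\cup M_0$ and $\{uc,bv\}\cup M_0$, as computed in \cref{lem:A-pure-three}. Each non-core edge $e=xy\notin M_0$ contributes switches from each $M_i$. I will show that these are maximal and pairwise distinct whenever the endpoints involved differ, except that switches from different $M_i$ coincide only when the $M_i$-partners of $x,y$ agree, that is, when $x,y$ are not core vertices. Concretely, I would classify non-core edges by how many endpoints lie in $V(T)$:
\begin{itemize}
\item An edge with both endpoints off the core gives the same switch for all three $M_i$, contributing $\ge1$, and exactly $1$ when it is maximal.
\item An edge $xy$ with $x\in V(T)$ changes the partner $x'$ among the $M_i$ unless $x'$ is constant. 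In $\Th(1,3,3)$, every core vertex has a partner that varies across the three matchings, so such an edge yields $\ge2$ distinct switches.
\end{itemize}
Counting edges: there are $k-3$ matched non-core edges and $k-2$ unmatched non-core edges, at least one of which touches the core. This gives
\[
m\ge 3+2+2+(k-3)=3k-4\quad\text{(rough, to be made precise)},
\]
where the extra $1$ comes from a single attachment edge contributing three switches. Equality forces every unmatched non-core edge to touch the core with exactly the minimal contribution. This means every hanging tree is a pendant $2$-path attached at a branch vertex $u$, all at the same branch vertex, since attachments at different vertices create extra mixed maximal matchings. That identifies $\AW_{k-3}$.

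\textbf{Verification and fallback.} Conversely, for $\AW_{k-3}$ I will verify $m=3k-4$ by induction using \cref{lem:P2}. The recursion is $m(\AW_r)=m(\AW_{r-1})+m(\AW_{r-1}-u)$, where $\AW_{r-1}-u$ is a path $P_5$ plus $K_2$'s; $P_5$ has $3$ maximal matchings, so the increment is $3$, consistent with $3(k-3)+5$. If the direct edge classification gets messy, the fallback is induction on $k$. Take a farthest leaf; since $G$ has a perfect matching, its support vertex is matched to it. Remove a pendant $2$-path, or argue that the structure is a pendant $K_2$ hung off a vertex, and use $m(G)=m(H)+m(H-w)\ge 3(k-1)-4+3$, with $m(H-w)\ge3$ holding with equality only for $w=u$.
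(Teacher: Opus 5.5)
\textbf{The inequality (repairable).} Your reduction to the core $\Th(1,3,3)$ via the crude count is sound, and it also holds for non-pure $G$: a nonmaximal switch creates a perfect matching differing on a $4$-cycle, and perfect matchings differ only on core cycles. The refined count for $\Th(1,3,3)$, however, is wrong as written, for three reasons.
\begin{enumerate}[label=\textup{(\roman*)}]
\item A switch $(M_i)_e$ contains $M_i\cap E(T)$. So an unmatched edge with both ends off the core gives three \emph{distinct} switches, not one. By the proof of \cref{lem:switch-distance}, switches from $M_i$ and $M_j$ can coincide only along edges of $M_i\triangle M_j$, which are core edges.
\item There are $(2k-6)-(k-3)=k-3$ unmatched non-core edges, not $k-2$.
\item $3+2+2+(k-3)=k+4$, not $3k-4$.
\end{enumerate}
The correct accounting is $3+2+3(k-3)=3k-4$. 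Equivalently, your crude count is one short only because the third candidate overlap of the $6$-cycle pair, $\{ab,cd\}\cup M_0$, is nonmaximal: the chord $uv$ joins its two uncovered vertices. That switch was already subtracted among the $4t_4$ nonmaximal switches, so $|\mathcal S(M_2)\cap\mathcal S(M_3)|\le 2$. So the inequality can be repaired along your line.

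\textbf{The equality case (the genuine gap).} Once the count is correct, every off-core non-matching edge contributes exactly three switches wherever it sits, so the count cannot single out $\AW_{k-3}$. Equality says only that every maximal matching is perfect or a switch. You must therefore exhibit an extra maximal matching in every other configuration, and your single sentence handles only attachments at two different vertices. Write the long paths as $uabv$ and $ucdv$. Two configurations escape your argument:
\begin{enumerate}[label=\textup{(\alph*)}]
\item A single pendant $2$-path $ayy'$ at the non-branch vertex $a$. Here $\{ay,bv,uc\}$ is an extra maximal matching, and $m=9>8$.
\item A tree with edge $uy$, a leaf $y'$ at $y$, and a pendant $2$-path $yw_1w_1'$. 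Here $\{yw_1,ua,dv\}$ and $\{yw_1,uc,bv\}$ are extra.
\end{enumerate}
In both, the edges lying in no perfect matching still form a star, so a star argument as in \cref{lem:C-X-star} does not suffice either.

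\textbf{Your fallback.} The fallback is exactly the paper's proof and does close the gap, but you must supply its missing steps.
\begin{enumerate}[label=\textup{(\roman*)}]
\item No leaf is adjacent to a core vertex. Each core vertex lies on two of the three cycles $M_i\triangle M_j$, hence is matched by a core edge in every perfect matching. So \cref{lem:support,lem:farthest} give a degree-$2$ support vertex outside the core, and $G$ is $H$ plus a pendant $2$-path at some $w$.
\item $m(H-w)\ge 3$. Deleting from each perfect matching of $H$ its edge at $w$ leaves exactly one uncovered vertex. This gives three distinct maximal matchings of $H-w$.
\item Equality then forces $H\cong\AW_{k-4}$ by induction and $m(H-w)=3$. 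A check of the vertices leaves only the branch vertex carrying the pendant paths (either branch vertex when $k=4$). For example, $\Th(1,3,3)-a$ has four maximal matchings.
\end{enumerate}
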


\begin{proof}
Induct on $k$. The pure-core case is \cref{lem:A-pure-three}. If $G$ is not pure, a support vertex cannot lie only on the core: a pendant edge would be fixed in all perfect matchings, but every core vertex belongs to at least two of the three alternating cycles. Hence \cref{lem:farthest} gives a degree-$2$ support vertex $s$ outside the core, with leaf $t$ and other neighbor $w$. Put $H=G-\{s,t\}$. Then $H$ still has three perfect matchings, and
\[
 m(G)=m(H)+m(H-w).
\]
For every perfect matching of $H$, deleting its edge covering $w$ produces a distinct maximal matching of $H-w$, so $m(H-w)\ge3$. The induction hypothesis gives
\[
 m(G)\ge[3(k-1)-4]+3=3k-4.
\]
Equality forces $H$ to attain equality and $m(H-w)=3$. In $\AW_r$, deleting the branch vertex that supports all pendant $2$-paths leaves $P_5\cup rK_2$, which has exactly three maximal matchings. For $r\ge1$, every other vertex gives more after deletion. For $r=0$, the two branch vertices are symmetric, and attaching at either one produces a graph isomorphic to $\AW_1$. Thus the reverse construction yields precisely $\AW_{k-3}$.
\end{proof}

\begin{lemma}\label{lem:A-W-dist}
Let $n=6+2r=2k$. The graph $\AW_r$ has exactly three perfect matchings and $3r+2=3k-7$ maximal matchings of size $k-1$. Therefore
\[
 m(\AW_r)=3k-4=\frac{3n-8}{2}
\]
and
\[
 \avm(\AW_r)=k-1+\frac{3}{3k-4}
 =\frac n2-1+\frac{6}{3n-8}.
\]
\end{lemma}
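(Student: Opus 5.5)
The plan is to compute the size distribution of maximal matchings of $\AW_r$ by induction on $r$, using \cref{lem:P2}. For the base case $r=0$, the graph is $\Th(1,3,3)$, and the proof of \cref{lem:A-pure-three} already lists its maximal matchings. There are three perfect matchings, $\{uv,ab,cd\},\{ua,bv,cd\},\{uc,dv,ab\}$, and two maximal matchings of size $2$, namely $\{ua,dv\}$ and $\{uc,bv\}$. Since $2=3\cdot0+2$, the statement holds for $r=0$. Moreover, $m(\AW_0)=5=3k-4$ with $k=3$.

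For the inductive step, $\AW_{r}$ arises from $H=\AW_{r-1}$ by attaching a pendant $2$-path at the branch vertex $u$ carrying the other $2$-paths. By \cref{lem:P2}, the maximal matchings of $\AW_r$ split into two classes:
\begin{itemize}
\item the matchings $M\cup\{st\}$ with $M\in\Mcal(H)$;
\item the matchings $N\cup\{us\}$ with $N\in\Mcal(H-u)$.
\end{itemize}
In the first class, each maximal matching of $H$ of size $j$ yields one of size $j+1$. The induction hypothesis therefore contributes three perfect matchings and $3(r-1)+2$ matchings of size $k-1$, where now $k=3+r$.

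For the second class, note that $H-u$ is $P_5\cup(r-1)K_2$: deleting $u$ from $\Th(1,3,3)$ leaves the path $a\,b\,v\,d\,c$, and each pendant $2$-path loses its attachment and becomes a $K_2$. A maximal matching of $P_5$ must cover the central vertex $v$ and leave exactly one uncovered vertex that is an endpoint or the middle. Checking directly, $P_5$ has exactly three maximal matchings ($\{ab,vd\}$, $\{ab,dc\}$, $\{bv,dc\}$), all of size $2$. Hence $H-u$ has three maximal matchings, each of size $2+(r-1)=r+1$. Adding $us$ gives three maximal matchings of size $r+2=k-1$.

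Summing the two classes, $\AW_r$ has three perfect matchings (consistent with the last assertion of \cref{lem:P2}) and $3(r-1)+2+3=3r+2$ matchings of size $k-1$. Since $k=3+r$, this equals $3k-7$. Then $m(\AW_r)=3+3k-7=3k-4$ and $m'(\AW_r)=3k+(k-1)(3k-7)$. Dividing,
\[
 \avm(\AW_r)=\frac{(k-1)(3k-4)+3}{3k-4}=k-1+\frac{3}{3k-4}.
\]
Substituting $k=n/2$ turns this into $\frac n2-1+\frac{6}{3n-8}$.

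The only real point of care is the identification $H-u\cong P_5\cup(r-1)K_2$ and the count of maximal matchings of $P_5$. Everything else is bookkeeping via \cref{lem:P2}, so I expect no serious obstacle.
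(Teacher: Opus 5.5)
Your proof is correct and is essentially the paper's argument: base case $\Th(1,3,3)$ from the enumeration in \cref{lem:A-pure-three}, then \cref{lem:P2} with $H-u\cong P_5\cup(r-1)K_2$, so that each added pendant $2$-path keeps the three perfect matchings and adds three maximal matchings of size $k-1$. One harmless slip is your claim that every maximal matching of $P_5$ covers the central vertex $v$, which is false since your own $\{ab,dc\}$ leaves $v$ uncovered; your explicit list of exactly three maximal matchings, all of size $2$, is correct, so nothing downstream changes.
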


\begin{proof}
For $r=0$, the three perfect matchings and the two maximal matchings of size $2$ were listed in the proof of \cref{lem:A-pure-three}. Adding a pendant $2$-path at the branch vertex $u$ leaves, after deleting $u$, the graph $P_5\cup rK_2$, which has exactly three maximal matchings. Thus \cref{lem:P2} shows that each added pendant $2$-path preserves the three perfect matchings and adds three maximal matchings whose size is one less than that of a perfect matching.
\end{proof}

\begin{theorem}\label{thm:A-even}
Let $G\in\Aclass_n$, where $n=2k\ge6$. Then
\[
 \avm(G)\le k-1+\frac{3}{3k-4}
 =\frac n2-1+\frac{6}{3n-8}.
\]
Equality holds if and only if $G\cong\AW_{k-3}$; see Figure~\ref{fig:A-extremal}(\ref{fig:A-W}).
\end{theorem}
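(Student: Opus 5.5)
We split according to the number $\pc(G)\in\{0,1,2,3\}$ of perfect matchings, which is at most three by \cref{lem:A-three-pm}. Every maximal matching has size at most $k$, and only perfect matchings have size $k$. Hence
\[
 \avm(G)\le k-1+\frac{\pc(G)}{m(G)}.
\]
It therefore suffices to show that $\pc(G)/m(G)\le 3/(3k-4)$, with equality only for $\AW_{k-3}$.

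\emph{No perfect matching.} If $\pc(G)=0$, then $\avm(G)\le k-1$, which is strictly below the bound.

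\emph{Three perfect matchings.} If $\pc(G)=3$, \cref{prop:A-three-all} gives $m(G)\ge 3k-4$, with equality exactly for $\AW_{k-3}$. \Cref{lem:A-W-dist} confirms that this graph attains the stated value.

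\emph{Two perfect matchings.} If $\pc(G)=2$, \cref{prop:A-two-pm} gives $m(G)\ge 2k-1$. The ratio is then at most $2/(2k-1)$, and we need $2/(2k-1)<3/(3k-4)$. This is equivalent to $6k-8<6k-3$, which always holds. So this case is strict.

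\emph{One perfect matching.} If $\pc(G)=1$, we need $m(G)>(3k-4)/3$, that is, $m(G)\ge k$ suffices for strictness. Let $M$ be the unique perfect matching. There are $k+1$ edges $e\notin M$. By \cref{lem:switch}, each switch $M_e$ either is maximal or completes to a second perfect matching. The latter is impossible, so every $M_e$ is maximal. Distinct $e$ give distinct $M_e$, since $M_e\setminus M=\{e\}$. This yields $k+1$ nonperfect maximal matchings, so
\[
 m(G)\ge k+2>\frac{3k-4}{3},
\]
and the case is strict.

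Combining the four cases proves the bound, with equality only when $\pc(G)=3$ and $m(G)=3k-4$, that is, only for $G\cong\AW_{k-3}$.

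The main obstacle is already handled by \cref{prop:A-three-all}. The remaining point to check carefully is that, for $\pc(G)=1$, the map $e\mapsto M_e$ is injective and every switch is genuinely maximal. Both follow from \cref{lem:switch} together with uniqueness of the perfect matching. For $k=3$, the case $\pc(G)=2$ still needs $2k-1=5>0$, so the case analysis also holds uniformly at $n=6$.
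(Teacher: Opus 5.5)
Your proposal is correct and follows the paper's proof essentially step for step. You split on $\pc(G)\in\{0,1,2,3\}$ via \cref{lem:A-three-pm}, use the $k+1$ distinct maximal switches to get $m(G)\ge k+2$ when $\pc(G)=1$, apply \cref{prop:A-two-pm} when $\pc(G)=2$, and use \cref{prop:A-three-all} with \cref{lem:A-W-dist} for $\pc(G)=3$ and for the equality case; the only difference is that you write out the inequality $\avm(G)\le k-1+\pc(G)/m(G)$ and the two elementary comparisons, which the paper leaves implicit.
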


\begin{proof}
By \cref{lem:A-three-pm}, $p=\pc(G)\le3$. If $p=0$, then $\avm(G)\le k-1$. If $p=1$, the $k+1$ nonmatching edges of the unique perfect matching all produce distinct maximal switches, so $m(G)\ge k+2$ and
\[
 \avm(G)\le k-1+\frac1{k+2}<k-1+\frac3{3k-4}.
\]
If $p=2$, \cref{prop:A-two-pm} gives
\[
 \avm(G)\le k-1+\frac2{2k-1}<k-1+\frac3{3k-4}.
\]
If $p=3$, \cref{prop:A-three-all} gives $m(G)\ge3k-4$, hence
\[
 \avm(G)\le k-1+\frac3{m(G)}\le k-1+\frac3{3k-4}.
\]
Equality forces $G\cong\AW_{k-3}$, and \cref{lem:A-W-dist} verifies equality.
\end{proof}

\section{Upper bounds in the class \texorpdfstring{$\mathcal{B}(p,q)$}{B(p,q)}}\label{sec:B}

Let $G\in\Bclass_n$, and let $u$ be the common vertex of the two core cycles. For $r\ge0$, let $\BO_r$ be obtained from $F_{3,3}$ by attaching $r$ pendant $2$-paths at $u$, and let $\BE_r$ be obtained from $F_{3,4}$ by attaching $r$ pendant $2$-paths at $u$. Their orders are $5+2r$ and $6+2r$, respectively; see Figure~\ref{fig:B-extremal}(\ref{fig:B-O}) and Figure~\ref{fig:B-extremal}(\ref{fig:B-E}). The two exceptional graphs that occur in the equality statements are $F_{4,4}$ and the graph obtained from $F_{3,4}$ by attaching one leaf to each of the two noncommon vertices of its triangle; see Figure~\ref{fig:B-extremal}(\ref{fig:B-F44}) and Figure~\ref{fig:B-extremal}(\ref{fig:B-exception-eight}).

\begin{figure}[htbp]
\centering
\subcaptionbox{$\BO_r\label{fig:B-O}$}[0.31\textwidth]{%
\begin{tikzpicture}[scale=0.82]
\node[vertex] (u) at (0,0) {};
\node[vertex] (a) at (-1.1,0.8) {};
\node[vertex] (b) at (-1.1,-0.8) {};
\node[vertex] (c) at (1.1,0.8) {};
\node[vertex] (d) at (1.1,-0.8) {};
\draw[edge] (u)--(a)--(b)--(u);
\draw[edge] (u)--(c)--(d)--(u);
\node[vertex] (s1) at (-0.55,-1.55) {};
\node[vertex] (t1) at (-0.95,-2.15) {};
\node[vertex] (s2) at (0.55,-1.55) {};
\node[vertex] (t2) at (0.95,-2.15) {};
\draw[edge] (u)--(s1)--(t1);
\draw[edge] (u)--(s2)--(t2);
\node at (0,-1.65) {$\cdots$};
\end{tikzpicture}}
\hfill
\subcaptionbox{$F_{4,4}\label{fig:B-F44}$}[0.31\textwidth]{%
\begin{tikzpicture}[scale=0.82]
\node[vertex] (u) at (0,0) {};
\node[vertex] (a) at (-0.9,0.8) {};
\node[vertex] (b) at (-1.8,0) {};
\node[vertex] (c) at (-0.9,-0.8) {};
\node[vertex] (d) at (0.9,0.8) {};
\node[vertex] (e) at (1.8,0) {};
\node[vertex] (f) at (0.9,-0.8) {};
\draw[edge] (u)--(a)--(b)--(c)--(u);
\draw[edge] (u)--(d)--(e)--(f)--(u);
\end{tikzpicture}}
\hfill
\subcaptionbox{$\BE_r\label{fig:B-E}$}[0.31\textwidth]{%
\begin{tikzpicture}[scale=0.82]
\node[vertex] (u) at (0,0) {};
\node[vertex] (a) at (-1.1,0.75) {};
\node[vertex] (b) at (-1.1,-0.75) {};
\node[vertex] (c) at (0.8,0.85) {};
\node[vertex] (d) at (1.8,0) {};
\node[vertex] (e) at (0.8,-0.85) {};
\draw[edge] (u)--(a)--(b)--(u);
\draw[edge] (u)--(c)--(d)--(e)--(u);
\node[vertex] (s1) at (-0.55,-1.55) {};
\node[vertex] (t1) at (-0.95,-2.15) {};
\node[vertex] (s2) at (0.55,-1.55) {};
\node[vertex] (t2) at (0.95,-2.15) {};
\draw[edge] (u)--(s1)--(t1);
\draw[edge] (u)--(s2)--(t2);
\node at (0,-1.65) {$\cdots$};
\end{tikzpicture}}

\par\medskip
\subcaptionbox{The graph obtained from $F_{3,4}$ by attaching one leaf to each of the two noncommon vertices of its triangle\label{fig:B-exception-eight}}[0.62\textwidth]{%
\begin{tikzpicture}[scale=0.84]
\node[vertex] (u) at (0,0) {};
\node[vertex] (a) at (-1.1,0.75) {};
\node[vertex] (b) at (-1.1,-0.75) {};
\node[vertex] (la) at (-1.95,1.25) {};
\node[vertex] (lb) at (-1.95,-1.25) {};
\node[vertex] (c) at (0.85,0.85) {};
\node[vertex] (d) at (1.85,0) {};
\node[vertex] (e) at (0.85,-0.85) {};
\draw[edge] (u)--(a)--(b)--(u);
\draw[edge] (a)--(la);
\draw[edge] (b)--(lb);
\draw[edge] (u)--(c)--(d)--(e)--(u);
\end{tikzpicture}}
\caption{The graphs $\BO_r$, $F_{4,4}$, $\BE_r$, and the exceptional graph of order $8$. In each parameterized graph, the fan with an ellipsis represents $r$ pendant $2$-paths; it is absent when $r=0$.}
\label{fig:B-extremal}
\end{figure}
\FloatBarrier

\subsection{Odd order}

Write
\[
 G-u=T_1\cup\cdots\cup T_s,
\qquad S_i=N_G(u)\cap V(T_i).
\]
Exactly two components correspond to the two core cycles and have $|S_i|=2$; every tree attached directly at $u$ gives a component with $|S_i|=1$.

\begin{lemma}\label{lem:B-branch}
Suppose $n=2k+1$ and every maximal matching of $G$ has size $k$. Then every $T_i$ and every $T_i-x$, $x\in S_i$, is equimatchable, and
\[
 1+\nu(T_j-x)+\sum_{i\ne j}\nu(T_i)=k
\]
for every $j$ and every $x\in S_j$.
\end{lemma}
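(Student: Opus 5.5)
The whole argument rests on the cut-vertex splicing of \cref{lem:cut-splice}, applied at the common vertex $u$. Fix $j$ and $x\in S_j$. Take any maximal matching $M_j$ of $T_j-x$ and any maximal matchings $M_i$ of $T_i$ for $i\ne j$. By \cref{lem:cut-splice}, the set
\[
 \{ux\}\cup M_j\cup\bigcup_{i\ne j}M_i
\]
is a maximal matching of $G$. By hypothesis it therefore has size exactly $k$, which gives
\[
 1+|M_j|+\sum_{i\ne j}|M_i|=k .
\]

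Equimatchability of the branches follows by varying one piece at a time. Each choice of $M_j$ and the $M_i$ can be made independently of the others. Keep all pieces fixed except one, and vary that one over its maximal matchings; the size of the varied piece cannot change. Varying $M_j$ shows that every maximal matching of $T_j-x$ has the same size, so $T_j-x$ is equimatchable. Varying $M_i$ for some $i\ne j$ shows that every maximal matching of $T_i$ has the same size, so $T_i$ is equimatchable. Taking all the pieces to be maximum then converts the size identity into
\[
 1+\nu(T_j-x)+\sum_{i\ne j}\nu(T_i)=k .
\]

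It remains to show that every $T_i$ is covered, including the case $s=1$. Since $G$ is bicyclic there are always at least two components: the two core cycles give two components with $|S_i|=2$. So for any $i$ there is some $j\ne i$, and $S_j\neq\emptyset$. Choosing such a $j$ and any $x\in S_j$, the argument above shows that $T_i$ is equimatchable. The statement about $T_j-x$ is quantified only over $x\in S_j$, so it has been obtained for every $j$ and every $x\in S_j$.

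The only point needing care is the appeal to \cref{lem:cut-splice}. If $T_j-x$ is empty, we take $M_j=\emptyset$. Any degenerate components are harmless because empty matchings of empty graphs are maximal. Beyond this there is no real obstacle; the statement is a direct consequence of the splicing lemma.
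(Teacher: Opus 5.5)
Your proposal is correct and follows the paper's proof step for step. You splice at $u$ via \cref{lem:cut-splice}, vary one piece at a time to get equimatchability of $T_j-x$ and of each $T_i$ with $i\ne j$, use the two core components so that every $T_i$ is reached, and read off the identity by choosing maximum matchings in each piece.
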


\begin{proof}
Fix $j$ and $x\in S_j$. By \cref{lem:cut-splice}, joining $ux$ to arbitrary maximal matchings of $T_j-x$ and of the remaining components produces a maximal matching of $G$. Varying one component at a time shows that $T_j-x$ and every $T_i$ with $i\ne j$ are equimatchable. Since there are two core components, the index $j$ can be chosen different from any prescribed $i$, and hence every $T_i$ is equimatchable. Their maximal matchings therefore have sizes equal to their matching numbers, and \cref{lem:cut-splice} gives the displayed identity.
\end{proof}

For a graph $H$, define $\delta(H)=|V(H)|-2\nu(H)$.

\begin{lemma}\label{lem:B-deficiency}
Under the assumptions of \cref{lem:B-branch}, put $d_i=\delta(T_i)$ and $D=\sum_i d_i$. Then either
\begin{enumerate}[label=\textup{(\alph*)}]
\item $D=0$, so every $T_i$ has a perfect matching; or
\item $D=2$, $s=2$, $d_1=d_2=1$, and $T_i-x$ has a perfect matching for every $x\in S_i$.
\end{enumerate}
\end{lemma}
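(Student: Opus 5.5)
The plan is pure vertex counting, combining the identity of \cref{lem:B-branch} with the definition of $\delta$. Every component $T_i$ of $G-u$ satisfies $|V(T_i)|=2\nu(T_i)+d_i$. Since $V(G)=\{u\}\cup\bigcup_i V(T_i)$ and $|V(G)|=2k+1$, summing gives
\[
 2k=2\sum_i\nu(T_i)+D .
\]
In particular $D$ is even and $\sum_i\nu(T_i)=k-D/2$.

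Next, fix any $j$ and any $x\in S_j$; here $S_j\neq\emptyset$ because $G$ is connected. Substituting the last formula into the identity of \cref{lem:B-branch} gives
\[
 \nu(T_j-x)=k-1-\sum_{i\ne j}\nu(T_i)=\nu(T_j)+\frac D2-1 .
\]
Deleting a vertex cannot increase the matching number, so $\nu(T_j-x)\le\nu(T_j)$, and therefore $D\le2$. Since $D$ is even and nonnegative, $D\in\{0,2\}$. If $D=0$, then every $d_i=0$, which is alternative (a).

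Now suppose $D=2$. Then $\nu(T_j-x)=\nu(T_j)$ for every $j$ and every $x\in S_j$, so
\[
 \delta(T_j-x)=|V(T_j)|-1-2\nu(T_j)=d_j-1 .
\]
Because $\delta\ge0$, this forces $d_j\ge1$ for every component $T_j$. As $\sum_j d_j=2$, at most two components exist. On the other hand, the two core cycles always contribute two components, so $s=2$ and $d_1=d_2=1$. Then $\delta(T_j-x)=0$ for each $x\in S_j$, meaning $T_j-x$ has a perfect matching. This is alternative (b).

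The only delicate point is ensuring that the identity is applied to every component, including the tree components with $|S_i|=1$. This is exactly what shows that any extra component would contribute positive deficiency, and hence that $s=2$. No step appears to be a genuine obstacle; the argument is bookkeeping once \cref{lem:B-branch} is available.
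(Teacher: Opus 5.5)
Your proof is correct and follows essentially the paper's route. Both arguments combine the vertex count $|V(G-u)|=2k$ with the identity of \cref{lem:B-branch}, rewritten in terms of deficiencies. The differences are only in bookkeeping. You read off that $D$ is even directly from $2k=2\sum_i\nu(T_i)+D$, and you get $D\le2$ from $\nu(T_j-x)\le\nu(T_j)$ applied to a single component. The paper instead derives $D-d_j\le1$ from $\delta(T_j-x)\ge0$, sums over all $s\ge2$ components to exclude $D\ge3$, and excludes $D=1$ by a separate parity argument on $T_j-x$.
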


\begin{proof}
Since $G-u$ has $2k$ vertices, \cref{lem:B-branch} gives, for every $j$ and $x\in S_j$,
\[
 \delta(T_j-x)+D-d_j=1.
\]
Hence $D-d_j\le1$ for all $j$. If $D\ge3$, then $d_j\ge D-1$ for all $j$, so
\[
 D=\sum_jd_j\ge s(D-1)\ge2(D-1)>D,
\]
a contradiction. Thus $D\le2$. If $D=1$, exactly one component has deficiency $1$ and every other component has deficiency $0$. Choose $j$ among the latter components. The displayed identity gives $\delta(T_j-x)=0$, which is impossible because $T_j$ has even order and $T_j-x$ has odd order. If $D=2$, then every $d_j\ge1$, whence $s=2$ and $d_1=d_2=1$; the identity then yields $\delta(T_i-x)=0$.
\end{proof}

\begin{lemma}\label{lem:B-perfect-tree}
A connected equimatchable tree with a perfect matching is $K_2$.
\end{lemma}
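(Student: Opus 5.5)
The plan is a direct argument using the fact that in an equimatchable graph with a perfect matching, every maximal matching is perfect. Let $T$ be a connected equimatchable tree with a perfect matching, so $|V(T)|=2m$ and every maximal matching has size $m$. We must show $m=1$. If $|V(T)|=2$ there is nothing to prove, so suppose $|V(T)|\ge4$; we will exhibit a maximal matching that leaves two vertices uncovered, contradicting the assumption.

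First take a longest path $v_0v_1\cdots v_d$ in $T$. Since $T$ has at least four vertices and is a tree with a perfect matching, $T$ is not a star (a star on at least three vertices has no perfect matching), so $d\ge3$. By \cref{lem:support}, $v_1$ has exactly one leaf neighbor, namely $v_0$. By maximality of the path, every neighbor of $v_1$ other than $v_2$ is a leaf, so $v_1$ has degree $2$.

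Now start the matching with the edge $v_1v_2$, which leaves the leaf $v_0$ uncovered. Extend $\{v_1v_2\}$ to a maximal matching $M$ of $T$; the leaf $v_0$ remains uncovered because its only neighbor $v_1$ is covered. Thus $M$ is not perfect. Since the order is even, at least two vertices are uncovered, so $|M|\le m-1<\nu(T)=m$. This contradicts equimatchability.

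Alternatively, one can shortcut through \cref{lem:sparse-random}: since every maximal matching has size $\nu(T)=|V(T)|/2$, every maximal matching is perfect, so $T$ is randomly matchable, and the only acyclic connected option is $K_2$. Both routes are short. The only step needing care in the direct route is guaranteeing $d\ge3$, so that $v_2$ exists and $v_1v_2$ is a non-pendant edge; the non-star observation handles this.
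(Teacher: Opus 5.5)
Your proof is correct and is essentially the paper's argument: take a longest path, use the unique-leaf property of support vertices together with maximality of the path to show the penultimate vertex has degree $2$, then extend the next path edge to a maximal matching that leaves the end leaf uncovered. Your alternative route via \cref{lem:sparse-random} is also valid, though it leans on Sumner's characterization rather than this elementary argument.
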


\begin{proof}
Suppose $T$ has at least four vertices, and let $v_1v_2\cdots v_t$ be a longest path. Every perfect matching contains $v_1v_2$, so $v_2$ has no leaf neighbor other than $v_1$. By the choice of the path, every neighbor of $v_2$ outside the path would be a leaf; hence $d_T(v_2)=2$. Extend $v_2v_3$ to a maximal matching. This matching leaves $v_1$ uncovered and is therefore not perfect, contradicting equimatchability.
\end{proof}

\begin{theorem}\label{thm:B-odd}
Let $G\in\Bclass_n$, where $n\ge5$ is odd. Then
\[
 \avm(G)\le\frac{n-1}{2}.
\]
Equality holds if and only if
\[
 G\cong\BO_{(n-5)/2},
\]
or $n=7$ and $G\cong F_{4,4}$.
\end{theorem}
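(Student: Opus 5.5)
The inequality is immediate: every matching of a graph of order $n=2k+1$ has size at most $k$. So only the equality case needs work. Assume every maximal matching of $G$ has size $k$. I will classify such $G$ using \cref{lem:B-branch,lem:B-deficiency}, applied to the components $T_1,\dots,T_s$ of $G-u$.

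Call the two core components $T_1,T_2$. Each is a unicyclic graph containing a path (a cycle minus $u$) whose ends form $S_i$, together with trees attached to it. Every other component $T_i$, $i\ge3$, is a tree with $|S_i|=1$. I will treat the two cases of \cref{lem:B-deficiency} separately.

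\emph{Case (b).} Here $s=2$, so no trees hang directly at $u$. Also $T_i-x$ has a perfect matching for both $x\in S_i$ and is equimatchable, so it is randomly matchable. Since $T_i-x$ is a tree, \cref{lem:sparse-random} says it is a disjoint union of copies of $K_2$. Now \cref{lem:tree-two-deletions} applies to the two distinct vertices of $S_i$ and gives $T_i\cong P_3$ with $S_i$ as its ends. Hence both cycles are $4$-cycles with nothing attached, and $G\cong F_{4,4}$ with $n=7$. I will confirm directly that $F_{4,4}$ is equimatchable: every maximal matching covers $u$, and the result has size $3$.

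\emph{Case (a).} Every $T_i$ is equimatchable and has a perfect matching, so it is randomly matchable. For $i\ge3$, \cref{lem:B-perfect-tree} gives $T_i\cong K_2$, attached to $u$ at one end; these are exactly the pendant $2$-paths at $u$. For $T_1,T_2$, \cref{lem:sparse-random} gives a single tree component that is $K_2$. Being attached through the two distinct vertices of $S_i$, it must be a triangle's remaining edge with no extra trees. The identity of \cref{lem:B-branch} then also needs $T_j-x$ equimatchable of the right size; for a single vertex this holds trivially. So both cycles are triangles and $G\cong\BO_r$ with $r=(n-5)/2$.

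The main obstacle I anticipate is the bookkeeping in case (a). I must be careful that $T_1,T_2$ are trees (a cycle through $u$ minus $u$ is a path, plus attached trees), so that \cref{lem:sparse-random} forces $K_2$ rather than $C_4$. I must also check that no extra tree can hang off the core vertices without breaking randomly-matchability.

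For the converse, I will use \cref{cor:odd-extension} inductively. $F_{3,3}$ is equimatchable with matchings of size $2$, and $F_{3,3}-u=2K_2$ is randomly matchable. After $r$ attachments, deleting $u$ leaves $(r+2)K_2$, which is still randomly matchable. Hence each $\BO_r$ is equimatchable with maximal matchings of size $(n-1)/2$, and equality holds exactly for the stated graphs.
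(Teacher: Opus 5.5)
Your argument is essentially the paper's proof: the trivial bound, then the split from \cref{lem:B-deficiency} into $D=0$ (every $T_i\cong K_2$, giving $\BO_r$) and $D=2$ (\cref{lem:tree-two-deletions} forcing $T_i\cong P_3$, giving $F_{4,4}$), with the converse via \cref{cor:odd-extension} and a direct check of $F_{4,4}$. The only blemish is wording: at the start you call $T_1,T_2$ unicyclic, and later you call $T_i-x$ a tree, whereas $T_i$ is a tree and $T_i-x$ is a forest (as you yourself note later), which is exactly what lets \cref{lem:sparse-random} and \cref{lem:tree-two-deletions} yield $K_2$ components and $P_3$.
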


\begin{proof}
Assume equality and write $n=2k+1$. Apply \cref{lem:B-deficiency}.

If $D=0$, every $T_i$ is an equimatchable tree with a perfect matching, so $T_i=K_2$ by \cref{lem:B-perfect-tree}. The two core components, each having two neighbors of $u$, form triangles. Every other $K_2$ component has exactly one endpoint adjacent to $u$; otherwise a third cycle would be created. Thus $G=\BO_r$ with $r=(n-5)/2$.

If $D=2$, then $s=2$. For each $i$ and each $x\in S_i$, the forest $T_i-x$ is equimatchable and has a perfect matching, so every component is $K_2$ by \cref{lem:B-perfect-tree}. Writing $S_i=\{a,b\}$, \cref{lem:tree-two-deletions} gives $T_i\cong P_3$ with $a$ and $b$ as its endvertices. Hence both core cycles are $C_4$, and $G\cong F_{4,4}$.

Conversely, deleting $u$ from $\BO_r$ gives $r+2$ copies of $K_2$, and every maximal matching has size $r+2$. A direct enumeration shows that every maximal matching of $F_{4,4}$ has size $3$.
\end{proof}

\subsection{Even order}

\begin{lemma}\label{lem:B-two-pm}
A graph in $\Bclass_{2k}$ has at most two perfect matchings.
\end{lemma}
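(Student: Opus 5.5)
Fix a perfect matching $M$ of $G\in\Bclass_{2k}$ and argue as in \cref{lem:A-three-pm}. If $M'$ is another perfect matching, then $M\triangle M'$ is a disjoint union of $M$-alternating cycles. The only cycles of $G$ are the two core cycles $C_p$ and $C_q$, and they share the vertex $u$. So $M\triangle M'$ is a single core cycle. Thus each further perfect matching is $M\triangle C$ for some core cycle $C$ that is $M$-alternating, and distinct choices of $C$ give distinct matchings. This already shows $\pc(G)\le3$. It remains to rule out the case where both $C_p$ and $C_q$ are $M$-alternating.

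Suppose both core cycles are $M$-alternating. An alternating cycle through $u$ contains the $M$-edge covering $u$, since every vertex of an alternating cycle is covered by one of its $M$-edges. That edge would then lie in both $C_p$ and $C_q$. But the two cycles share only the vertex $u$ and no edge, a contradiction. Hence at most one core cycle is $M$-alternating. Therefore, besides $M$, there is at most one further perfect matching, and $\pc(G)\le2$.

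The argument has no real obstacle. The only point to state carefully is that in $\Bclass(p,q)$ the cycle space has exactly three nonzero elements: $C_p$, $C_q$, and their edge-disjoint union. The union is not a single cycle, but it could still appear as a union of two disjoint alternating cycles. This is excluded because the two cycles are not vertex-disjoint, as they meet at $u$, while the cycles in $M\triangle M'$ are pairwise vertex-disjoint.
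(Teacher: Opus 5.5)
Your proof is correct and follows the paper's argument: a second perfect matching must differ from $M$ on a single core cycle, and the two core cycles cannot both be $M$-alternating, because the $M$-edge at the shared vertex $u$ would then have to lie in both edge-disjoint cycles. The paper phrases this last step as $M$ needing two distinct edges at $u$, which is the same observation.
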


\begin{proof}
Fix a perfect matching $M$. The symmetric difference with another perfect matching must be one core cycle. If two distinct alternatives existed, the two core cycles would both be alternating relative to $M$, forcing $M$ to use two edges incident with their common vertex $u$, impossible.
\end{proof}

\begin{lemma}\label{lem:B-one-pm-count}
If $G\in\Bclass_{2k}$ has one perfect matching, then $m(G)\ge k+2$ and
\[
 \avm(G)<k-1+\frac1k.
\]
\end{lemma}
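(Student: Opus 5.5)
Let $M$ be the unique perfect matching of $G$, so $|M|=k$ and $|E(G)\setminus M|=k+1$. For each edge $e=xy\notin M$, with $xx',yy'\in M$, form the switch $M_e$ of \cref{lem:switch}. If $x'y'$ were an edge, then \cref{lem:switch} would give a second perfect matching $M_e\cup\{x'y'\}\ne M$. This contradicts uniqueness, so every one of the $k+1$ switches is maximal and has size $k-1$.

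Next I show these switches are pairwise distinct. The matching $M_e$ is the only member of $\mathcal S(M)$ that contains $e$, since every other switch $M_f$ consists of $f$ together with edges of $M$. Hence distinct edges $e$ give distinct matchings. Together with $M$, this yields at least $k+2$ maximal matchings, so $m(G)\ge k+2$.

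For the average, every maximal matching other than $M$ has size at most $k-1$. Writing $m=m(G)$, we get
\[
 \avm(G)\le\frac{k+(m-1)(k-1)}{m}=k-1+\frac1m\le k-1+\frac1{k+2}<k-1+\frac1k .
\]
The only subtle point is the distinctness of the switches. Once it is noted that $e$ is the unique non-$M$ edge of $M_e$, the rest is immediate; no special structure of $\Bclass$ is needed beyond $|E|=2k+1$.
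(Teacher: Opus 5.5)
Your proof is correct and follows the paper's own argument: the $k+1$ switches of the unique perfect matching are maximal by uniqueness, they are pairwise distinct, and $\avm(G)\le k-1+1/m(G)\le k-1+1/(k+2)$ finishes the proof. Your observation that $M_e\setminus M=\{e\}$ just makes explicit the distinctness step, which the paper states without justification.
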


\begin{proof}
The unique perfect matching has $k+1$ nonmatching edges. Every switch is maximal, since a nonmaximal switch would complete to a second perfect matching. The switches are distinct, so $m(G)\ge k+2$, and
\[
 \avm(G)\le k-1+\frac1{m(G)}\le k-1+\frac1{k+2}<k-1+\frac1k.
\]
\end{proof}

\begin{lemma}\label{lem:B-two-count}
If $G\in\Bclass_{2k}$ has two perfect matchings, then it has at least $2k-2$ nonperfect maximal matchings and hence $m(G)\ge2k$.
\end{lemma}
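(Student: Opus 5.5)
We plan to imitate the switching count of \cref{prop:A-two-pm}, now using the structure of the class $\Bclass$. Let $M,M'$ be the two perfect matchings. Since they differ only on alternating cycles, and every cycle of $G$ is one of the two core cycles, $M\triangle M'$ is exactly one core cycle $C$. This cycle has even length $\ell\ge4$, and $M$ and $M'$ agree outside it. Each of $M$ and $M'$ has $(2k+1)-k=k+1$ nonmatching edges. By \cref{lem:switch}, each such edge $e$ gives a switch $M_e$ or $M'_e$, which has size $k-1$ and is maximal exactly when the two uncovered vertices are nonadjacent. Our task is therefore to bound the number of nonmaximal switches and the number of coincidences between $\mathcal S(M)$ and $\mathcal S(M')$.

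\textbf{Nonmaximal switches.} Suppose $M_e$ is not maximal. Then, by \cref{lem:switch}, $M_e\cup\{x'y'\}$ is a perfect matching other than $M$, so it must be $M'$. Consequently $\{xy,x'y'\}=M'\setminus M$ and $C$ is the alternating $4$-cycle $x x' y' y$, with $e\in M'$. If $\ell\ge6$, $|M'\setminus M|\ge3$ and no switch of $M$ is nonmaximal. If $\ell=4$, the two edges of $M'\cap C$ are the only possible nonmaximal switches of $M$, and symmetrically for $M'$. Any nonmatching edge $e\notin M'$ leaves uncovered vertices whose joining edge would lie in $M'\setminus M$, which is impossible. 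So each side loses at most two switches, and only when $\ell=4$.

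\textbf{Coincidences.} Here we apply \cref{lem:switch-distance}. If $\ell\ge8$, the families are disjoint. If $\ell=4$, the $4$-cycle $C$ is a core cycle of a $\Bclass$ graph; its only possible chord would create a third cycle, so $C$ is chordless and the families are disjoint. This gives at least $2(k+1)-4=2k-2$ nonperfect maximal matchings, which is exactly the claimed bound. If $\ell=6$, all switches are maximal, and there are at most three overlaps, giving $2(k+1)-3=2k-1\ge 2k-2$. Adding the two perfect matchings yields $m(G)\ge2k$ in every case.

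The main obstacle is the $\ell=4$ case. There we must check carefully that the four discarded nonmaximal switches (two from each side) are genuinely distinct from the surviving ones, and that the chordlessness from the $\Bclass$ structure really excludes the single common member allowed by \cref{lem:switch-distance}(ii). Both facts follow from the lemmas already established, and the $\ell=4$ bound is tight. If the chord argument needed more care, we would verify directly that a chord $xy'$ of $C$ forms a triangle with two edges of $C$, giving three cycles in $G$ and contradicting membership in $\Bclass_{2k}$.
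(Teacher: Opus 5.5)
Your proposal is correct and is essentially the paper's proof: the same split by the length of the alternating core cycle, the loss of exactly two nonmaximal switches per side only in the $4$-cycle case, and disjointness of the two switching families via \cref{lem:switch-distance} together with chordlessness of a core $C_4$. You also justify two points the paper only asserts, namely why nonmaximal switches can occur only when $\ell=4$ and why a core $4$-cycle of a $\Bclass$ graph has no chord.
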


\begin{proof}
Let $M\triangle M'$ be an alternating core cycle of length $2r$. For $r\ge4$, the two switching families are disjoint by \cref{lem:switch-distance} and give $2k+2$ nonperfect maximal matchings. For $r=3$, \cref{lem:switch-distance} gives at least $2(k+1)-3=2k-1$. For $r=2$, each family loses the two switches that complete to the other perfect matching, leaving $k-1$ maximal switches in each family. The alternating core $4$-cycle is chordless, so the two switching families are disjoint by \cref{lem:switch-distance}. Thus there are $2k-2$ nonperfect maximal matchings.
\end{proof}

\begin{proposition}\label{prop:B-even-bound}
For $G\in\Bclass_{2k}$,
\[
 \avm(G)\le k-1+\frac1k.
\]
If equality holds, then $G$ has two perfect matchings differing on a core $4$-cycle, $m(G)=2k$, and every nonperfect maximal matching has size $k-1$.
\end{proposition}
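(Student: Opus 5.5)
We split according to $p=\pc(G)$, which is at most $2$ by \cref{lem:B-two-pm}. The basic estimate is this: every maximal matching has size at most $k$, and only perfect matchings have size $k$. Hence
\[
 m'(G)\le (k-1)m(G)+p,\qquad\text{so}\qquad \avm(G)\le k-1+\frac{p}{m(G)},
\]
with equality in this step if and only if every nonperfect maximal matching has size exactly $k-1$.

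If $p=0$, then $\avm(G)\le k-1<k-1+1/k$. If $p=1$, \cref{lem:B-one-pm-count} already gives the strict inequality $\avm(G)<k-1+1/k$. So equality can only occur when $p=2$.

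For $p=2$, \cref{lem:B-two-count} gives $m(G)\ge 2k$, hence
\[
 \avm(G)\le k-1+\frac{2}{m(G)}\le k-1+\frac1k .
\]
It remains to read off the equality conditions. Equality forces $m(G)=2k$ and also forces equality in the basic estimate, so every nonperfect maximal matching has size $k-1$. Finally, we must show that the alternating core cycle $M\triangle M'$ is a $4$-cycle. Go back to the case analysis in the proof of \cref{lem:B-two-count}:
\begin{itemize}
\item if the cycle has length $2r\ge 8$, there are at least $2k+2$ nonperfect maximal matchings, so $m(G)\ge 2k+4$;
\item if it has length $6$, there are at least $2k-1$, so $m(G)\ge 2k+1$.
\end{itemize}
Either case gives $m(G)>2k$, contradicting equality. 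Hence the cycle has length $4$.

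The only delicate point is the one just used: the equality characterization needs the strict counts from the $r\ge3$ cases inside the proof of \cref{lem:B-two-count}, not merely the stated conclusion $m(G)\ge 2k$. I would quote those cases explicitly. I would also note that $k\ge 2$, so that the bound for $p=1$ and the comparison $1/(k+2)<1/k$ are meaningful.
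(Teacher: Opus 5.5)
Your proposal is correct and follows the paper's proof exactly: split on $\pc(G)\le 2$, dispose of $p\le 1$ via the trivial bound and \cref{lem:B-one-pm-count}, and for $p=2$ combine $\avm(G)\le k-1+2/m(G)$ with $m(G)\ge 2k$ from \cref{lem:B-two-count}. You also appeal explicitly to the strict counts for alternating cycles of length at least $6$ inside the proof of \cref{lem:B-two-count}, and this is precisely what the paper's terse ``equality forces equality in every step'' leaves implicit when it concludes that the cycle is a $4$-cycle.
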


\begin{proof}
Use \cref{lem:B-two-pm}. The cases of zero and one perfect matching are strict. With two perfect matchings and $N=m(G)$,
\[
 \avm(G)\le k-1+\frac2N\le k-1+\frac1k
\]
by \cref{lem:B-two-count}. Equality forces equality in every step.
\end{proof}

Assume equality, let $M,M'$ be the two perfect matchings, let $C=M\triangle M'$ be the alternating $4$-cycle, and define
\[
 F=M\cap M',
 \qquad
 X=E(G)\setminus(M\cup M').
\]
Then $|F|=k-2$ and $|X|=k-1$. For each $e\in X$, switching from $M$ and from $M'$ yields two distinct maximal matchings. The following criterion determines when these $2|X|=2k-2$ matchings are all the nonperfect maximal matchings.

\begin{lemma}\label{lem:B-X}
Under the equality hypotheses of \cref{prop:B-even-bound}, $m(G)=2k$ if and only if every two edges of $X$ are incident.
\end{lemma}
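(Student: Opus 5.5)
The plan is to compare every nonperfect maximal matching with the switching families $\mathcal S(M)$ and $\mathcal S(M')$. Under the equality hypotheses these families together contain exactly the $2k-2$ distinct matchings $M_e,M'_e$ with $e\in X$. So $m(G)=2k$ holds precisely when no nonperfect maximal matching lies outside $\mathcal S(M)\cup\mathcal S(M')$. I will use one elementary observation throughout. Every switch $M_e$ or $M'_e$ contains exactly one edge of $X$, namely $e$, because its remaining edges come from $M$ or from $M'$. A perfect matching contains no edge of $X$, since $M,M'$ are the only perfect matchings.

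\emph{Necessity.} Suppose $e,f\in X$ are nonincident. Start from $M$, delete the (at most four) edges of $M$ covering endpoints of $e$ or $f$, and add $e$ and $f$. The result is a matching containing two edges of $X$. Extend it arbitrarily to a maximal matching $Q$. Then $Q$ is not perfect, because it contains edges of $X$. It is also not a switch, because it contains at least two edges of $X$. Hence $Q$ is a nonperfect maximal matching outside the $2k-2$ switches, and $m(G)\ge 2k+1$.

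\emph{Sufficiency.} Assume every two edges of $X$ are incident, and let $Q$ be a nonperfect maximal matching. Since $E(G)=F\cup C\cup X$ and the edges of $X$ are pairwise incident, $Q$ contains at most one edge of $X$.

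First I rule out the case $Q\cap X=\emptyset$. In that case $Q\subseteq F\cup E(C)$. Two nonincident edges of the $4$-cycle $C$ are opposite edges, so they belong to the same one of $M,M'$. Hence $Q\subseteq M$ or $Q\subseteq M'$, and a proper subset of a perfect matching is never maximal, because any omitted edge can be added. This is a contradiction, so $Q$ contains exactly one edge $e=xy\in X$.

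The same opposite-edges remark shows that $Q\cap E(C)$ lies entirely in $M$ or entirely in $M'$; say $Q\setminus\{e\}\subseteq M$. The edges $xx',yy'\in M$ covering $x$ and $y$ are distinct, since $xy\notin M$. Both must be absent from $Q$, because $Q$ already covers $x$ and $y$ by $e$. Therefore $Q\subseteq M_e=(M\setminus\{xx',yy'\})\cup\{xy\}$. Since $M_e$ is a matching and $Q$ is maximal, $Q=M_e$, so $Q\in\mathcal S(M)$; the case $Q\setminus\{e\}\subseteq M'$ is symmetric. Hence every nonperfect maximal matching is one of the $2k-2$ switches, and $m(G)=2k$.

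The only delicate point is the sufficiency direction: a matching in the alternating structure could a priori use edges of both $M$ and $M'$ on $C$. This is handled by the fact that nonincident edges of a $4$-cycle are opposite, so they lie in the same perfect matching. No size hypothesis on $Q$ is needed, since maximality alone forces $Q=M_e$.
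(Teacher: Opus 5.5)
Your proof is correct and follows the same route as the paper's. For necessity, you extend two disjoint $X$-edges to a maximal matching that is neither perfect nor a switch. For sufficiency, you note that a maximal matching contains at most one $X$-edge and is therefore $M$, $M'$, or a single switch. Your containment step ($Q\subseteq M$ or $Q\subseteq M_e$, then maximality forces equality) is a slightly tighter way of making precise the paper's phrase ``must contain every $F$-edge disjoint from $e$ and one of the two maximal choices on $C$.''
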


\begin{proof}
If $e,f\in X$ are disjoint, extend $\{e,f\}$ to a maximal matching. It contains at least two edges of $X$, so it is neither a perfect matching nor a single switch, giving more than $2k$ maximal matchings.

Conversely, if $X$ is pairwise intersecting, a matching contains at most one edge of $X$. A maximal matching containing none must contain all edges of $F$ and one of the two perfect matchings of $C$, hence is $M$ or $M'$. A maximal matching containing $e\in X$ must contain every $F$-edge disjoint from $e$ and one of the two maximal choices on $C$; it is exactly the switch from $M$ or from $M'$. Thus there are $2+2|X|=2k$ maximal matchings.
\end{proof}

Let the second core cycle have neighbors $a,b$ of $u$. Since $u$ is matched inside $C$ by both perfect matchings, $ua,ub\in X$.

\begin{lemma}\label{lem:B-X-shape}
If $X$ is pairwise intersecting, then either all edges of $X$ are incident with $u$, or
\[
 X=\{ua,ub,ab\}.
\]
\end{lemma}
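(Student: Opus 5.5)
This is the standard classification of pairwise intersecting edge sets: such a set is either a star or a triangle. I would prove it directly, using only that $ua,ub\in X$ and that $a\ne b$ are distinct neighbors of $u$ on the second core cycle.

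Suppose not every edge of $X$ is incident with $u$, and pick $e=xy\in X$ with $u\notin\{x,y\}$. Since $e$ meets $ua$, one endpoint of $e$ is $a$. Since $e$ meets $ub$, one endpoint of $e$ is $b$. Hence $e=ab$, and in particular $ab\in E(G)$.

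Now take any $f\in X$. If $f$ is incident with $u$, say $f=uz$, then $f$ must meet $ab$, so $z\in\{a,b\}$ and $f\in\{ua,ub\}$. If $f$ is not incident with $u$, the argument above shows $f=ab$. Therefore
\[
X=\{ua,ub,ab\}.
\]

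The only point needing care is that $a\ne b$. This holds because the second core cycle has length at least $3$, so $u$ has two distinct neighbors on it. Also, since $u$ is covered by the $C$-edges in both $M$ and $M'$, the edges $ua$ and $ub$ indeed lie in $X$.

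This case does occur, namely when the second cycle is a triangle. No further structure of $G$ is needed. The main (mild) subtlety is simply to use both $ua$ and $ub$ to pin down both endpoints of $e$.
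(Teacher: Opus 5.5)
Your proof is correct and follows the paper's argument. An edge of $X$ avoiding $u$ must meet both $ua$ and $ub$, so it is $ab$. Once $ab\in X$, any edge $uc\in X$ must meet $ab$, which forces $c\in\{a,b\}$. Your proof only spells out in more detail the facts $a\ne b$ and $ua,ub\in X$ that the paper states before the lemma.
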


\begin{proof}
An edge that meets both $ua$ and $ub$ either contains $u$ or is $ab$. If $ab\in X$, no further edge $uc$ with $c\notin\{a,b\}$ can belong to $X$.
\end{proof}

\begin{lemma}\label{lem:B-equality-graphs}
The first case of \cref{lem:B-X-shape} gives $G\cong\BE_{k-3}$. The second gives $k=4$ and the graph obtained from $F_{3,4}$ by attaching one leaf to each of the two noncommon vertices of its triangle.
\end{lemma}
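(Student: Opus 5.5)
The plan is to reconstruct $G$ from the decomposition $E(G)=F\cup C\cup X$, where $|F|=k-2$, $|E(C)|=4$ and $|X|=k-1$. All $2k$ vertices are covered by $F$ together with $C$. The core cycle containing $u$ and matched alternately by $M,M'$ is $C$. The second core cycle passes through $u$, $a$, $b$. Every edge of $X$ is a nonmatching edge for both $M$ and $M'$, and every $F$-edge is fixed.

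\textbf{First case.} Suppose every edge of $X$ is incident with $u$. The second core cycle uses the edges $ua,ub$ from $X$. Its remaining edges avoid $u$, so they lie in $F\cup C$. No edge of $C$ is available here, because $C$ is a different cycle meeting the second one only at $u$. Hence the path from $a$ to $b$ inside the second cycle consists of $F$-edges alone. Being a matching, $F$ can supply only one such edge, so the second cycle is the triangle $uab$ with $ab\in F$.

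The remaining $X$-edges $uc$ join $u$ to vertices $c$ outside $C\cup\{a,b\}$. Each such $c$ is covered by an $F$-edge $cc'$. The vertex $c'$ must have degree $1$. Indeed, any further edge at $c'$ lies in $X$ and so contains $u$; that would create a third cycle $u c c' u$, or give $c'$ a second neighbor inside $F\cup C$, which is impossible since $F$ is a matching disjoint from $C$. So the structure at $u$ consists of the triangle, the $4$-cycle $C$, and $k-3$ pendant $2$-paths $u c c'$. The count is $2+4+2(k-3)=2k$ vertices.

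The same matching argument shows that no other edges exist. Every edge not at $u$ lies in $F\cup C$, and each $F$-edge has been placed. This yields $G\cong\BE_{k-3}$.

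\textbf{Second case.} Suppose $X=\{ua,ub,ab\}$. Then $|X|=k-1=3$ gives $k=4$, so $|F|=2$. The vertices $a,b$ are not on $C$ and $ab\notin F$, so $a$ and $b$ are covered by two distinct $F$-edges $aa'$ and $bb'$. Counting vertices gives $4+2+2=8$, which exhausts $V(G)$. Since there are no further edges, $a'$ and $b'$ are leaves. This yields $F_{3,4}$ with one leaf attached at each of the two triangle vertices other than $u$.

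\textbf{Main obstacle.} The delicate step is justifying that the second core cycle and the attached trees are forced. This requires tracking which edge classes $F$, $C$, $X$ each edge may belong to. In particular, the edges of $C$ are exactly the four edges of the alternating cycle, so they cannot appear on the other core cycle. The degree-$1$ claim for the $F$-partners uses the fact that all non-$u$ edges lie in the matching $F\cup C$.

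Finally, I would confirm that both graphs actually attain the equality hypotheses. By \cref{lem:B-X}, each has $m=2k$, and the nonperfect maximal matchings all have size $k-1$.
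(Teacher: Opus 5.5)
Your proof is correct and follows the paper's argument. In the first case, the $a$--$b$ path of the second core cycle avoids $u$ and $C$, so it consists of edges of the matching $F$ and is a single edge; each remaining $X$-edge $uc$ is then followed by a forced $F$-edge $cc'$ whose far end is a leaf. In the second case, $|X|=3$ gives $k=4$, and the two distinct $F$-edges covering $a$ and $b$ end in leaves.

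Your explicit vertex and edge counts just make the paper's ``any further edge would lie in $X$'' step concrete. The closing verification is not part of this lemma. If you invoke \cref{lem:B-X} for it, note that its counting argument uses only the structure of $F$, $C$, $X$ and not the equality hypothesis, so the appeal is not circular.
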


\begin{proof}
If all edges of $X$ meet $u$, the second core cycle must be a triangle: every edge of the path between $a$ and $b$ lies outside $X$ and hence in the common matching $F$, so the path can have only one edge. Every remaining component of $G-u$ is a $K_2$: after an edge from $u$ in $X$, the next edge is forced to lie in the common matching $F$, and any further edge would belong to $X$ without being incident with $u$. Thus every such component forms a pendant $2$-path at $u$, and $G\cong\BE_{k-3}$.

If $X=\{ua,ub,ab\}$, then $|X|=k-1=3$, so $k=4$. The three edges form the second core triangle. The common matching $F$ has two edges and must cover $a$ and $b$ by distinct edges. The other endpoints of these edges are leaves, since any further incident edge would belong to $X$. Hence $G$ is the graph obtained from $F_{3,4}$ by attaching one leaf to each of the two noncommon vertices of its triangle.
\end{proof}

\begin{theorem}\label{thm:B-even}
Let $G\in\Bclass_n$, where $n=2k\ge6$. Then
\[
 \avm(G)\le k-1+\frac1k
 =\frac n2-1+\frac2n.
\]
Equality holds if and only if $G\cong\BE_{k-3}$, or $n=8$ and $G$ is obtained from $F_{3,4}$ by attaching one leaf to each of the two noncommon vertices of its triangle. These graphs are shown in Figure~\ref{fig:B-extremal}(\ref{fig:B-E}) and Figure~\ref{fig:B-extremal}(\ref{fig:B-exception-eight}).
\end{theorem}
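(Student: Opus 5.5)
The upper bound is exactly \cref{prop:B-even-bound}, so the plan is to combine that proposition with the equality analysis developed after it. The argument then has two halves: the forward direction, showing that equality forces one of the two listed graphs, and the converse, checking that each listed graph attains the bound.

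For the forward direction, assume $\avm(G)=k-1+1/k$. By \cref{prop:B-even-bound}, $G$ has two perfect matchings $M,M'$ whose symmetric difference is a core $4$-cycle $C$. It also has $m(G)=2k$, and all nonperfect maximal matchings have size $k-1$. Since $m(G)=2k$, \cref{lem:B-X} shows that the edges of $X=E(G)\setminus(M\cup M')$ pairwise intersect.

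Next, $u$ lies on $C$ and is matched inside $C$ by both perfect matchings. Hence the two edges from $u$ to the other core cycle, $ua$ and $ub$, lie in $X$. \cref{lem:B-X-shape} now leaves two possibilities: either every edge of $X$ meets $u$, or $X=\{ua,ub,ab\}$. \cref{lem:B-equality-graphs} identifies these as $\BE_{k-3}$ and, with $k=4$, the exceptional graph of order $8$. The case $k=3$ is included, since $\BE_0=F_{3,4}$.

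For the converse, I would count directly. In $\BE_r$, with $n=2k$ and $r=k-3$, the two perfect matchings consist of a perfect matching of $C_4$, the triangle edge $ab$, and all pendant edges $st$. Here $X=\{ua,ub\}\cup\{us_i\}$ has $2+r=k-1$ edges, all incident with $u$, so they pairwise intersect. By \cref{lem:B-X}, $m=2k$ and the nonperfect maximal matchings are the $2k-2$ switches, each of size $k-1$. This gives
\[
\avm=\frac{2k+(2k-2)(k-1)}{2k}=k-1+\frac1k.
\]
For the order-$8$ exceptional graph, $X$ is the triangle $\{ua,ub,ab\}$, which is pairwise intersecting, so the same computation applies with $k=4$. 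It gives $\avm=3+1/4$.

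The main obstacle is making sure the converse applies \cref{lem:B-X} legitimately. That lemma is stated under the equality hypotheses, but its proof uses only the existence of two perfect matchings differing on a chordless $4$-cycle, together with the pairwise-intersecting property of $X$. I would check that these structural facts hold directly in the listed graphs, rather than circularly assuming equality. I would also confirm that every switch of size $k-1$ is maximal: its two uncovered vertices are the ends of an edge that is absent from the graph, since the core $4$-cycle is chordless.
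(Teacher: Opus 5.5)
Your proposal is correct and follows the paper's proof: the bound comes from \cref{prop:B-even-bound}, the equality classification from \cref{lem:B-X,lem:B-X-shape,lem:B-equality-graphs}, and the converse from counting two perfect matchings plus $2k-2$ maximal switches of size $k-1$, which you check more explicitly and non-circularly than the paper does. One small correction to your final remark concerns the switches along edges of $X$: they are maximal because a nonmaximal switch would complete to a perfect matching containing an edge of $X$, i.e.\ a third perfect matching, contradicting \cref{lem:B-two-pm}; the chordlessness of $C$ is instead what makes the two switching families disjoint, and the two switches along the $C$-edges outside $M$ are in fact not maximal.
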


\begin{proof}
The bound is \cref{prop:B-even-bound}, and the equality classification is \cref{lem:B-X,lem:B-X-shape,lem:B-equality-graphs}. In either extremal graph there are two perfect matchings and $2k-2$ maximal matchings of size $k-1$, so the displayed average follows.
\end{proof}

\section{Upper bounds in the class \texorpdfstring{$\mathcal{C}(p,q)$}{C(p,q)}}\label{sec:C}

Let $G\in\Cclass_n$. Its core is $\Dumb_\ell(p,q)$, with connector
\[
 a_0=x_0,x_1,\ldots,x_{\ell-1},x_\ell=b_0.
\]
For $r\ge0$, define the following four families. The graph $\CO_r$ is obtained from $\Dumb_1(3,4)$ by attaching $r$ pendant $2$-paths at the connector endpoint on the triangle; $\CP_r$ is obtained from $\Dumb_2(4,4)$ by attaching $r$ pendant $2$-paths at the internal connector vertex; $\CQ_r$ is obtained from $\Dumb_1(4,4)$ by attaching $r$ pendant $2$-paths at one connector endpoint; and $\CR_r$ is obtained from $\Dumb_2(4,4)$ by attaching one leaf and $r$ pendant $2$-paths at the internal connector vertex. Their orders are $7+2r$, $9+2r$, $8+2r$, and $10+2r$, respectively; see Figure~\ref{fig:C-extremal}(\ref{fig:C-O})--(\ref{fig:C-R}). For later reference, Figure~\ref{fig:C-extremal}(\ref{fig:C-D133}) also shows $\Dumb_1(3,3)$, the unique graph in $\Cclass_6$.

\begin{figure}[htbp]
\centering
\subcaptionbox{$\CO_r\label{fig:C-O}$}[0.31\textwidth]{%
\begin{tikzpicture}[scale=0.8]
\node[vertex] (a0) at (-0.1,0) {};
\node[vertex] (a1) at (-1.0,0.75) {};
\node[vertex] (a2) at (-1.0,-0.75) {};
\node[vertex] (b0) at (1.25,0) {};
\node[vertex] (b1) at (2.15,0.8) {};
\node[vertex] (b2) at (3.0,0) {};
\node[vertex] (b3) at (2.15,-0.8) {};
\draw[edge] (a0)--(a1)--(a2)--(a0);
\draw[edge] (a0)--(b0);
\draw[edge] (b0)--(b1)--(b2)--(b3)--(b0);
\node[vertex] (s1) at (-0.65,1.4) {};
\node[vertex] (t1) at (-1.15,1.95) {};
\node[vertex] (s2) at (0.4,1.4) {};
\node[vertex] (t2) at (0.9,1.95) {};
\draw[edge] (a0)--(s1)--(t1);
\draw[edge] (a0)--(s2)--(t2);
\node at (-0.1,1.52) {$\cdots$};
\end{tikzpicture}}
\hfill
\subcaptionbox{$\CP_r\label{fig:C-P}$}[0.31\textwidth]{%
\begin{tikzpicture}[scale=0.8]
\node[vertex] (a0) at (-0.1,0) {};
\node[vertex] (a1) at (-1.0,0.8) {};
\node[vertex] (a2) at (-1.85,0) {};
\node[vertex] (a3) at (-1.0,-0.8) {};
\node[vertex] (x) at (1.1,0) {};
\node[vertex] (b0) at (2.3,0) {};
\node[vertex] (b1) at (3.2,0.8) {};
\node[vertex] (b2) at (4.05,0) {};
\node[vertex] (b3) at (3.2,-0.8) {};
\draw[edge] (a0)--(a1)--(a2)--(a3)--(a0);
\draw[edge] (a0)--(x)--(b0);
\draw[edge] (b0)--(b1)--(b2)--(b3)--(b0);
\node[vertex] (s1) at (0.55,1.0) {};
\node[vertex] (t1) at (0.05,1.6) {};
\node[vertex] (s2) at (1.65,1.0) {};
\node[vertex] (t2) at (2.15,1.6) {};
\draw[edge] (x)--(s1)--(t1);
\draw[edge] (x)--(s2)--(t2);
\node at (1.1,1.1) {$\cdots$};
\end{tikzpicture}}
\hfill
\subcaptionbox{$\CQ_r\label{fig:C-Q}$}[0.31\textwidth]{%
\begin{tikzpicture}[scale=0.8]
\node[vertex] (a0) at (0,0) {};
\node[vertex] (a1) at (-0.9,0.8) {};
\node[vertex] (a2) at (-1.75,0) {};
\node[vertex] (a3) at (-0.9,-0.8) {};
\node[vertex] (b0) at (1.45,0) {};
\node[vertex] (b1) at (2.35,0.8) {};
\node[vertex] (b2) at (3.2,0) {};
\node[vertex] (b3) at (2.35,-0.8) {};
\draw[edge] (a0)--(a1)--(a2)--(a3)--(a0);
\draw[edge] (a0)--(b0);
\draw[edge] (b0)--(b1)--(b2)--(b3)--(b0);
\node[vertex] (s1) at (-0.45,1.45) {};
\node[vertex] (t1) at (-0.85,2.05) {};
\node[vertex] (s2) at (0.45,1.45) {};
\node[vertex] (t2) at (0.85,2.05) {};
\draw[edge] (a0)--(s1)--(t1);
\draw[edge] (a0)--(s2)--(t2);
\node at (0,1.55) {$\cdots$};
\end{tikzpicture}}

\par\medskip
\subcaptionbox{$\CR_r\label{fig:C-R}$}[0.47\textwidth]{%
\begin{tikzpicture}[scale=0.8]
\node[vertex] (a0) at (-0.2,0) {};
\node[vertex] (a1) at (-1.1,0.8) {};
\node[vertex] (a2) at (-1.95,0) {};
\node[vertex] (a3) at (-1.1,-0.8) {};
\node[vertex] (x) at (1.0,0) {};
\node[vertex] (b0) at (2.2,0) {};
\node[vertex] (b1) at (3.1,0.8) {};
\node[vertex] (b2) at (3.95,0) {};
\node[vertex] (b3) at (3.1,-0.8) {};
\node[vertex] (leaf) at (1.0,-1.15) {};
\draw[edge] (a0)--(a1)--(a2)--(a3)--(a0);
\draw[edge] (a0)--(x)--(b0);
\draw[edge] (b0)--(b1)--(b2)--(b3)--(b0);
\draw[edge] (x)--(leaf);
\node[vertex] (s1) at (0.45,1.0) {};
\node[vertex] (t1) at (-0.05,1.6) {};
\node[vertex] (s2) at (1.55,1.0) {};
\node[vertex] (t2) at (2.05,1.6) {};
\draw[edge] (x)--(s1)--(t1);
\draw[edge] (x)--(s2)--(t2);
\node at (1.0,1.1) {$\cdots$};
\end{tikzpicture}}
\hfill
\subcaptionbox{$\Dumb_1(3,3)\label{fig:C-D133}$}[0.31\textwidth]{%
\begin{tikzpicture}[scale=0.84]
\node[vertex] (a0) at (-0.1,0) {};
\node[vertex] (a1) at (-1.0,0.8) {};
\node[vertex] (a2) at (-1.0,-0.8) {};
\node[vertex] (b0) at (1.2,0) {};
\node[vertex] (b1) at (2.1,0.8) {};
\node[vertex] (b2) at (2.1,-0.8) {};
\draw[edge] (a0)--(a1)--(a2)--(a0);
\draw[edge] (a0)--(b0);
\draw[edge] (b0)--(b1)--(b2)--(b0);
\end{tikzpicture}}
\caption{The graphs $\CO_r$, $\CP_r$, $\CQ_r$, $\CR_r$, and $\Dumb_1(3,3)$. In each parameterized graph, the fan with an ellipsis represents $r$ pendant $2$-paths; it is absent when $r=0$.}
\label{fig:C-extremal}
\end{figure}
\FloatBarrier

\subsection{Odd order}

We first record two elementary equimatchability classifications.

\begin{lemma}\label{lem:path-equi}
The path $P_t$ is equimatchable if and only if $t\in\{1,2,3,5\}$.
\end{lemma}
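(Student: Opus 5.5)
We prove the two directions separately: the small cases $t\le 5$ by direct inspection, and the cases $t=4$ and $t\ge 6$ by exhibiting two maximal matchings of different sizes. Label the path $v_1v_2\cdots v_t$, and recall that a matching is maximal exactly when its uncovered vertices form an independent set.

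For the positive direction, $P_1$ has only the empty matching and $P_2$ only the single edge, so both are equimatchable. In $P_3$ every maximal matching is one of the two edges, each of size $1$. In $P_5$ no maximal matching can have size $1$: a single edge leaves three uncovered vertices, and these contain two adjacent ones (for example $v_1v_2$ unmatched with $v_2v_3$, or a pair on the other side). So every maximal matching has size $2=\nu(P_5)$. Concretely, the maximal matchings of $P_5$ are $\{v_1v_2,v_3v_4\}$, $\{v_1v_2,v_4v_5\}$ and $\{v_2v_3,v_4v_5\}$, all of size $2$.

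For the negative direction we produce a maximal matching of size strictly less than $\nu(P_t)=\lfloor t/2\rfloor$. For $t=4$, the matching $\{v_2v_3\}$ is maximal, since it leaves $v_1,v_4$ uncovered and these are independent, yet $\nu(P_4)=2$. For $t\ge6$, start with $\{v_2v_3,v_5v_6\}$, which leaves $v_1$ and $v_4$ uncovered and nonadjacent. Complete it by a maximum matching of the remaining path $v_7\cdots v_t$; that path has $t-6$ vertices, so it contributes $\lfloor (t-6)/2\rfloor$ edges. The result has size $2+\lfloor(t-6)/2\rfloor=\lfloor t/2\rfloor-1$.

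It remains to check that this matching is maximal. Any uncovered vertex of the tail lies at its end $v_t$, which happens when $t-6$ is odd. The vertex $v_t$ is not adjacent to $v_1$ or $v_4$, and its only neighbour $v_{t-1}$ is covered. In the boundary case $t=7$ the tail is the single vertex $v_7$, whose neighbour $v_6$ is covered. Thus the uncovered set is independent, the matching is maximal, and it is not maximum.

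There is no real obstacle here. The only care needed is this maximality check at the boundary $t=7$ and for odd $t$, which the independence criterion settles immediately.
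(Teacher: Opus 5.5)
Your proof is correct, but it argues differently from the paper. The paper proves the lemma in one line. It quotes $\lceil (t-1)/3\rceil$ as the minimum size of a maximal matching of $P_t$ and notes that this equals $\nu(P_t)=\lfloor t/2\rfloor$ exactly for $t\in\{1,2,3,5\}$.

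You never use that formula. For $t\le 5$ you check directly that no short maximal matching exists; for $P_5$ this is the special case of the lower bound behind the formula, since one edge dominates at most three of the four edges. For $t=4$ and $t\ge 6$ you exhibit one maximal matching of size $\lfloor t/2\rfloor-1$. That already breaks equimatchability, even though it is not a minimum maximal matching.

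The paper's route is shorter and records the whole range $[\lceil (t-1)/3\rceil,\lfloor t/2\rfloor]$ of maximal-matching sizes. However, it rests on a standard fact stated without proof: its lower bound is needed for the ``if'' direction, and a matching construction for the ``only if'' direction. Your argument is self-contained.

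One small imprecision: an arbitrary maximum matching of the odd path $v_7\cdots v_t$ may leave an interior vertex uncovered rather than $v_t$. Either fix the tail matching as $\{v_7v_8,v_9v_{10},\dots\}$, or note that any maximum matching of the tail leaves at most one tail vertex uncovered, and all its neighbours ($v_6$ or tail vertices) are then covered. Maximality holds either way.
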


\begin{proof}
The maximum matching size is $\lfloor t/2\rfloor$, while the minimum size of a maximal matching is $\lceil(t-1)/3\rceil$. Equality holds precisely for $t=1,2,3,5$.
\end{proof}

For $t\ge1$, let $L(p,t)$ be obtained by identifying one endpoint of a path of length $t$ with a vertex of $C_p$; set $L(p,0)=C_p$.

\begin{lemma}\label{lem:lollipop}
The graph $L(p,t)$ is equimatchable if and only if
\[
 (p,t)\in\{(3,0),(3,2),(4,0),(4,1),(4,3),(5,0),(7,0)\}.
\]
\end{lemma}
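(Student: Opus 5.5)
We split into the cycle case $t=0$ and the lollipop case $t\ge1$. Throughout we use the fact that a maximal matching is the same as a matching whose uncovered vertices form an independent set. So $L(p,t)$ fails to be equimatchable exactly when it has a maximal matching of size below $\nu$. Equivalently, we look for an independent set $S$, with $|S|$ exceeding the minimum deficiency $|V|-2\nu$, such that $L(p,t)-S$ has a perfect matching.

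\emph{Case $t=0$.} The cycle $C_p$ has $\nu=\lfloor p/2\rfloor$. Its smallest maximal matching has size $\lceil p/3\rceil$, the saturation number of a cycle. Comparing $\lceil p/3\rceil=\lfloor p/2\rfloor$ shows that equality holds exactly for $p\in\{3,4,5,7\}$. For $p=6$ we have $2<3$, and for every $p\ge8$ the left side is strictly smaller. This parallels the argument of \cref{lem:path-equi}.

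\emph{Case $t\ge1$.} Label the path $v=y_0,y_1,\dots,y_t$, where $v$ lies on $C_p$, and put $n=p+t$. Every maximal matching covers the support vertex $y_{t-1}$ by \cref{lem:support}, and there are two ways it can do so.
\begin{itemize}
\item It uses $y_{t-1}y_t$. The remainder is then a maximal matching of $L(p,t-2)$, where for $t=1$ we read this as $C_p$ with $v$ deleted from consideration only through the edge $y_0y_1$.
\item It uses $y_{t-2}y_{t-1}$. This mirrors \cref{lem:P2}, giving the recursion in which one of the two subgraphs is $H-w$.
\end{itemize}
For $t\ge2$ we can apply \cref{lem:P2} to $L(p,t)$, with $H=L(p,t-2)$ and $w=y_{t-2}$. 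Hence $L(p,t)$ is equimatchable iff $L(p,t-2)$ is equimatchable, $H-w$ is equimatchable, and the sizes line up: $\nu(H)+1=\nu(H-w)+1$, that is, $\nu(H-w)=\nu(H)$.

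For $t=2$ we have $H=C_p$ and $H-w=P_{p-1}$. So $C_p$ must be equimatchable, $P_{p-1}$ must be equimatchable, and $\nu(P_{p-1})=\nu(C_p)$, which forces $p$ odd. By \cref{lem:path-equi} and the cycle case, this gives $p=3$ only: $p=5$ gives $P_4$ and $p=7$ gives $P_6$, neither equimatchable.

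For $t=3$ we have $H=L(p,1)$ and $H-w=C_p$. For $t\ge4$, $H-w$ is disconnected, namely $L(p,t-3)\cup K_1$ or a similar graph, so we can iterate. This recursion reduces everything to the base case $t=1$ together with the cycle and path cases.

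\emph{Base case $t=1$.} Treat it directly with the same cut-vertex idea. Matchings using $vy_1$ correspond to maximal matchings of $P_{p-1}$. Matchings using a cycle edge at $v$ leave $y_1$ uncovered, and what remains behaves like maximal matchings of $C_p$ that cover $v$. Requiring all of these to have size $\nu(L(p,1))=\lceil p/2\rceil$ gives the following. The path $P_{p-1}$ must be equimatchable with $\nu(P_{p-1})=\lceil p/2\rceil-1$, which forces $p$ odd or $p-1\in\{1,2,3,5\}$. The maximal matchings of $C_p$ that cover $v$ must also all have size $\lceil p/2\rceil$, which rules out odd $p$. This yields $p=4$.

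Feeding $t=1$ with $p=4$ into the recursion gives $(4,3)$, since deleting $w$ leaves $C_4$, which is equimatchable with the right matching number. The next steps $(4,5)$ and $(3,4)$ fail: $H-w$ there contains a path component such as $P_4$, or the sizes do not line up.

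\emph{Main obstacle.} The delicate part is the size bookkeeping in the recursion, that is, checking $\nu(H-w)=\nu(H)$ together with equimatchability of the disconnected graph $H-w$. For each candidate pair $(p,t)$ outside the stated list, I would confirm the failure by exhibiting an explicit independent set $S$, as in \cref{lem:A-independent-three}. For example, in $L(4,2)$ take $S=\{v,y_2\}$ and the perfect matching of $P_3-\,$..., and in $L(3,1)$ take $S=\{y_1\}\cup\{\text{a triangle vertex}\}$, leaving a nonmaximum maximal matching. Conversely, for each listed pair I would verify equimatchability by a direct check.
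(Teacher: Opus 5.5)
\paragraph{Comparison with the paper.} Your route differs from the paper's. The paper deletes the cut vertex $v$ and uses \cref{lem:cut-splice} to force both $P_{p-1}$ and $P_t$ to be equimatchable. This bounds $p\in\{3,4,6\}$ and $t\in\{1,2,3,5\}$ in one step, and the remaining twelve graphs are then checked by hand.

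You instead peel a pendant $2$-path off the tail with \cref{lem:P2}. Because that decomposition is exact, you get an if-and-only-if criterion: $L(p,t)$ is equimatchable exactly when $L(p,t-2)$ and $H-w$ are, with $\nu(H)=\nu(H-w)$. So once the base cases are settled, the positive cases $(3,2),(4,1),(4,3)$ come out of the same computation with no separate verification. This is a sound strategy, but several steps are wrong as written.

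\paragraph{The base case $t=1$ does not reach $p=4$.} The identity $\lfloor (p-1)/2\rfloor=\lceil p/2\rceil-1$ holds for every $p$. So your first condition is exactly that $P_{p-1}$ is equimatchable, i.e.\ $p\in\{3,4,6\}$, not ``$p$ odd or \dots''. Ruling out odd $p$ then leaves $p\in\{4,6\}$, and you never exclude $p=6$. You need your second condition for even $p$. In $C_6=vc_1c_2c_3c_4c_5$ the matching $\{vc_1,c_3c_4\}$ is maximal, covers $v$, and has size $2<3$. More generally, for even $p$ that condition says precisely that $C_p$ is randomly matchable, which forces $p=4$.

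\paragraph{The recursion for $t\ge4$ is misstated.} Here $H-w=L(p,t-2)-y_{t-2}=L(p,t-3)$, which is connected. There is no extra $K_1$ and no path component. The recursion therefore reads: $L(p,t)$ is equimatchable if and only if $L(p,t-2)$ and $L(p,t-3)$ both are, with equal matching numbers.

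Consequently, $(3,4)$ and $(4,5)$ fail because $H-w$ is $L(3,1)$, respectively $L(4,2)$, and neither is equimatchable by your cases $t=1,2$. Their matching numbers do agree with $\nu(H)$, so a size mismatch is not the reason.

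You should also state the induction explicitly. Write $E_t$ for the set of admissible $p$. The recursion gives $E_t\subseteq E_{t-2}\cap E_{t-3}$. Since $E_1=E_3=\{4\}$ and $E_2=\{3\}$, we get $E_4=E_5=\emptyset$, and hence $E_t=\emptyset$ for all $t\ge4$.

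\paragraph{Your sample witness for $L(4,2)$ is invalid.} Deleting $S=\{v,y_2\}$ isolates $y_1$, so no perfect matching remains. With $C_4=vacb$, take $S=\{b,y_2\}$ and the matching $\{vy_1,ac\}$ instead. Once the recursion is in place, such witnesses are unnecessary anyway.
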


\begin{proof}
For $t=0$, the cycle $C_p$ is equimatchable exactly when
\[
 \left\lceil\frac p3\right\rceil=\left\lfloor\frac p2\right\rfloor,
\]
namely for $p=3,4,5,7$. For $t\ge1$, delete the cut vertex shared by the cycle and the tail. The components $P_{p-1}$ and $P_t$ must both be equimatchable, so by \cref{lem:path-equi}, $p\in\{3,4,6\}$ and $t\in\{1,2,3,5\}$. Direct checking of these twelve cases leaves $(3,2),(4,1),(4,3)$.
\end{proof}

\begin{lemma}\label{lem:C-pure-odd}
Let $D=\Dumb_\ell(p,q)$ have odd order. If every maximal matching of $D$ is maximum, then
\[
 D\cong\Dumb_1(3,4)
 \quad\text{or}\quad
 D\cong\Dumb_2(4,4).
\]
\end{lemma}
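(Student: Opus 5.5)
We would prove \cref{lem:C-pure-odd} by cut-vertex arguments in the spirit of \cref{lem:B-branch}, combined with the lollipop classification of \cref{lem:lollipop}. Write the order as $p+q+\ell-1=2k+1$. The basic tool is \cref{lem:cut-splice}: for a cut vertex $x$ of $D$, a neighbor $z$ of $x$ in a component $H_j$ of $D-x$, and arbitrary maximal matchings of $H_j-z$ and of the other components, the union with $xz$ is a maximal matching of $D$. Since every maximal matching of $D$ has size $k$, varying one component at a time shows that each component $H_i$ is equimatchable, and so is each $H_j-z$.

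We first take $x=a_0$. The components of $D-a_0$ are the path $P_{p-1}$ (the rest of $C_p$) and the lollipop $L(q,\ell-1)$. Taking $z$ on the cycle side, $L(q,\ell-1)$ must be equimatchable, and $P_{p-1}$ must be equimatchable, so $p-1\in\{1,2,3,5\}$, giving $p\in\{3,4,6\}$. Symmetrically, $L(p,\ell-1)$ is equimatchable and $q\in\{3,4,6\}$. By \cref{lem:lollipop}, $(q,\ell-1)$ and $(p,\ell-1)$ both lie in $\{(3,0),(3,2),(4,0),(4,1),(4,3)\}$; in particular $p,q\in\{3,4\}$ and $\ell\in\{1,2,3,4\}$. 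We would also exploit the splice with $z=x_1$, which requires $L(q,\ell-2)$ with its endpoint removed to be equimatchable when $\ell\ge2$. This cuts the list further.

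The common admissible values of $\ell-1$ for both cycles are as follows. If $p=q=3$, then $\ell-1\in\{0,2\}$. If $p=q=4$, then $\ell-1\in\{0,1,3\}$. If $\{p,q\}=\{3,4\}$, then $\ell-1=0$. Imposing odd order $p+q+\ell-1$ leaves the candidates $\Dumb_1(3,4)$ (order $7$), $\Dumb_2(4,4)$ (order $9$), $\Dumb_4(4,4)$ (order $11$), and $\Dumb_2(3,3)$ and $\Dumb_4(3,3)$.

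We then eliminate the spurious candidates by exhibiting, in each, an independent set of three vertices whose deletion leaves a graph with a perfect matching, as in \cref{lem:A-independent-three}:
\begin{itemize}
\item In $\Dumb_2(3,3)$ (order $7$), delete $x_1$ and one non-connector vertex of each triangle. This leaves two $K_2$'s, giving a maximal matching of size $2<3$.
\item In $\Dumb_4(3,3)$, use an analogous choice along the connector.
\item In $\Dumb_4(4,4)$, delete the middle connector vertex $x_2$ and the two cycle vertices antipodal to $a_0$ and $b_0$. This leaves even paths, giving a maximal matching of size $4<5$.
\end{itemize}

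The main obstacle is the careful bookkeeping of these residual cases, checking that each deletion set is independent and that the remainder has a perfect matching. The lemma claims only the forward direction, so the converse (that $\Dumb_1(3,4)$ and $\Dumb_2(4,4)$ are equimatchable) need not be verified here, though a quick enumeration would confirm it.
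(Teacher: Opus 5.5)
Your overall route is the paper's: cut at $a_0$ (and symmetrically at $b_0$), apply \cref{lem:path-equi,lem:lollipop}, impose parity, and then eliminate the one remaining spurious core by exhibiting a non-maximum maximal matching. That last step is the only non-routine one, and \textbf{it fails as written}. In $\Dumb_4(4,4)$, deleting $x_2$, $a_2$, $b_2$ does not leave even paths. The vertex $a_0$ keeps its three neighbors $a_1,a_3,x_1$, and $b_0$ keeps $b_1,b_3,x_3$. Every other edge among the surviving vertices is destroyed, so the remainder is $2K_{1,3}$, which has no perfect matching. Your witness therefore produces no maximal matching of size $4$.

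The fix is to delete a cycle neighbor of each connector endpoint instead. The set $S=\{a_1,x_2,b_1\}$ is independent, and $\Dumb_4(4,4)-S$ is the union of the two paths $x_1a_0a_3a_2$ and $x_3b_0b_3b_2$. This gives the maximal matching $\{a_0x_1,x_3b_0,a_2a_3,b_2b_3\}$ of size $4<5$, which is exactly the paper's witness. Alternatively, use the splice at $a_0$ with $z=x_1$ that you mention but never apply. Stated correctly, the relevant graph is $L(q,\ell-1)-x_1=L(q,\ell-2)$ itself, not $L(q,\ell-2)$ with a further endpoint removed. For $\Dumb_4(4,4)$ this forces $L(4,2)$ to be equimatchable, contradicting \cref{lem:lollipop}, and it removes $\Dumb_4(4,4)$ with no explicit witness at all.

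There is also a bookkeeping slip in your candidate list. For $p=q=3$, your own admissible values $\ell-1\in\{0,2\}$ give $\Dumb_1(3,3)$ and $\Dumb_3(3,3)$, of orders $6$ and $8$, so no odd-order candidate arises. The graphs $\Dumb_2(3,3)$ and $\Dumb_4(3,3)$ correspond to $\ell-1\in\{1,3\}$, which \cref{lem:lollipop} already excludes. The correct list after parity is just $\Dumb_1(3,4)$, $\Dumb_2(4,4)$, $\Dumb_4(4,4)$. Eliminating the two extra graphs again is harmless, and your witness for $\Dumb_2(3,3)$ is correct. Still, it is unnecessary, and the ``analogous choice'' for $\Dumb_4(3,3)$ was never actually specified.
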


\begin{proof}
Delete $a_0$. The components are $P_{p-1}$ and $L(q,\ell-1)$. By \cref{lem:cut-splice}, both are equimatchable; applying \cref{lem:path-equi,lem:lollipop} and then exchanging the two cycles gives $p,q\in\{3,4\}$. If $p=q=3$, the allowed connector lengths are $1$ and $3$, and both resulting graphs have even order. If $\{p,q\}=\{3,4\}$, only $\ell=1$ survives. If $p=q=4$, the odd-order condition and the lollipop list give $\ell=2$ or $4$. The graph $\Dumb_4(4,4)$ has the maximal matching
\[
 \{a_0x_1,x_3b_0,a_2a_3,b_2b_3\}
\]
of size $4$, whereas its matching number is $5$. Thus only the two stated cores remain.
\end{proof}

\begin{lemma}\label{lem:C-no-leaf}
Let $G\in\Cclass_{2k+1}$ be reduced. If every maximal matching of $G$ is maximum, then $G$ has no leaves.
\end{lemma}

\begin{proof}
By \cref{lem:odd-support}, every support vertex has exactly one leaf neighbor. If a leaf had a support vertex outside the core, then \cref{lem:farthest} would give a degree-$2$ support vertex outside the core, contradicting reducedness. Hence the support vertex $x$ of any leaf $y$ lies on the core. Put $H=G-\{x,y\}$. For every nonleaf core neighbor $z$ of $x$ and every maximal matching $N$ of $H-z$, the matching $N\cup\{xz\}$ is maximal in $G$. It has size $k$, and therefore $N$ is a perfect matching of $H-z$. Thus $H-z$ is randomly matchable, so every component of $H-z$ is $K_2$ or $C_4$ by \cref{lem:sparse-random}.

If $x$ lies on a cycle but is not a connector endpoint, choose a cycle neighbor $z$ so that the other route from $x$ to the connector endpoint remains. The component of $H-z$ containing the other core cycle contains that cycle together with at least one additional edge. It is therefore neither $K_2$ nor $C_4$, a contradiction.

Suppose next that $x=a_0$ is a connector endpoint. Choose the connector neighbor of $a_0$ for $z$. Then $C_p-a_0=P_{p-1}$ is a component of $H-z$, so it must be $K_2$ and $p=3$. If instead $z$ is one of the two triangle neighbors of $a_0$, the other triangle neighbor is isolated in $H-z$, contradicting random matchability. The case $x=b_0$ is symmetric.

Finally, suppose that $x=x_i$ is an internal connector vertex. Taking $z=x_{i-1}$ shows that the component on the $b_0$-side must be isomorphic to $C_4$, so $i=\ell-1$ and $q=4$. Symmetrically, taking $z=x_{i+1}$ gives $i=1$ and $p=4$. Hence $\ell=2$, $p=q=4$, and $x=x_1$. Taking $z=a_0$ now leaves the component $C_4-a_0=P_3$, which is neither $K_2$ nor $C_4$. This final contradiction proves the lemma.
\end{proof}

\begin{lemma}\label{lem:C-odd-attachment}
For every $r\ge0$, the only vertex $w$ such that $\CO_r-w$ is randomly matchable is the common attachment point on the triangle. For every $r\ge0$, the only such vertex in $\CP_r$ is the unique internal connector vertex.
\end{lemma}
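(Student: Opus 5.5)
The plan is to check every vertex $w$ directly, using \cref{lem:sparse-random} as the main filter. Every graph $\CO_r$ and $\CP_r$ is bicyclic, so $G-w$ has at most two cycles in total. Moreover, each component of $G-w$ that meets only one core cycle has cyclomatic number at most one. The argument has two parts. First, show that the named vertex works. Second, show that every other vertex fails, either because $G-w$ has odd order or an isolated vertex, or because some component of $G-w$ is not $K_2$ or $C_4$, or contains both cycles.

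\emph{The named vertex works.} In $\CO_r$, let $w=a_0$ be the triangle vertex carrying the pendant $2$-paths. Then $G-w$ is $K_2\cup C_4\cup rK_2$: the triangle minus $a_0$ gives $K_2$, the $4$-cycle stays intact, and each pendant path gives $K_2$. This is randomly matchable. In $\CP_r$, deleting the internal connector vertex $x_1$ leaves $2C_4\cup rK_2$, which is randomly matchable for the same reason.

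\emph{Every other vertex fails.} Both graphs have odd order, so $G-w$ always has even order and parity gives nothing; I will use structure instead.
\begin{enumerate}
\item Let $w$ be a support vertex $s$ of a pendant $2$-path. Then its leaf becomes isolated in $G-w$, so $G-w$ has no perfect matching.
\item Let $w$ be a leaf $t$. Then $G-w$ is connected and still contains both cycles. Hence it has cyclomatic number two, so it is not $K_2$ or $C_4$. (Alternatively, $s$ becomes a leaf hanging at the attachment vertex.)
\item Let $w$ be a core vertex other than the named one. I check the remaining core vertices orbit by orbit. In $\CO_r$:
\begin{enumerate}
\item If $w$ is a non-attachment triangle vertex, the component of $G-w$ containing $a_0$ also contains the $C_4$ and an extra edge.
\item If $w=b_0$, the path $P_3$ left over from the $4$-cycle is a component.
\item If $w$ is another vertex of the $4$-cycle, the component containing the triangle contains the triangle itself, which is not $C_4$.
\end{enumerate}
In $\CP_r$:
\begin{enumerate}
\item If $w=a_0$, the component $C_4-a_0=P_3$ appears.
\item If $w$ is another vertex of a $4$-cycle, the component containing $x_1$ contains the other $4$-cycle together with the edge to $x_1$, so it is neither $K_2$ nor $C_4$.
\end{enumerate}
Each of these outcomes is excluded by \cref{lem:sparse-random}, applied to the components.
\end{enumerate}

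\emph{The case $r=0$.} When $r=0$ there are no pendant paths, so the claim reduces to the core vertices alone; the same orbit checks cover it.

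I expect no real obstacle here. The only care needed is that the reader can verify the orbit list is exhaustive; the symmetry of each core keeps this to about five cases per graph.
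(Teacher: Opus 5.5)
Your proof is correct and follows the paper's approach: you check that deleting the named vertex leaves only copies of $C_4$ and $K_2$, then rule out each other vertex orbit by exhibiting an isolated vertex or a component that is neither $K_2$ nor $C_4$. One step needs tightening, and your parenthetical already fixes it: \cref{lem:sparse-random} only covers components of cyclomatic number at most one, so it does not by itself exclude the connected bicyclic graph $G-t$, but a maximal matching containing $a_0b_0$ (resp.\ $x_1a_0$) leaves the new leaf $s$ uncovered. The paper's own one-line proof passes over this same leaf case.
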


\begin{proof}
Deleting the stated vertex from $\CO_r$ leaves $C_4$ and copies of $K_2$. Deleting it from $\CP_r$ leaves two copies of $C_4$ and copies of $K_2$. For every other vertex, a direct check of the vertex orbits shows that the deletion either creates an isolated vertex or leaves a unicyclic component that is not a $C_4$.
\end{proof}

\begin{theorem}\label{thm:C-odd}
Let $G\in\Cclass_n$, where $n\ge7$ is odd. Then
\[
 \avm(G)\le\frac{n-1}{2}.
\]
Equality holds if and only if $G\cong\CO_{(n-7)/2}$ or, for $n\ge9$, $G\cong\CP_{(n-9)/2}$. The corresponding graphs are shown in Figure~\ref{fig:C-extremal}(\ref{fig:C-O}) and Figure~\ref{fig:C-extremal}(\ref{fig:C-P}).
\end{theorem}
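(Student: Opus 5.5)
The bound is immediate: for $n=2k+1$ every matching has at most $k$ edges, so $\avm(G)\le k$. Equality holds exactly when every maximal matching has size $k$, so the work is to classify these equimatchable graphs. I will follow the template of \cref{thm:A-odd}.

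First I reduce. Assume equality. By \cref{lem:odd-support}, every support vertex has exactly one leaf neighbor. By \cref{lem:farthest}, while some support vertex lies outside the core, there is a degree-$2$ support vertex $s$ outside the core, with leaf $t$ and other neighbor $w$. Deleting $\{s,t\}$ leaves a graph $H\in\Cclass_{2k-1}$ that is still connected and bicyclic and has the same core. Since $G$ is equimatchable, \cref{lem:P2} shows that the matchings $M\cup\{st\}$, $M\in\Mcal(H)$, all have size $k$. Hence $H$ is again extremal. By \cref{cor:odd-extension}, $H-w$ is randomly matchable. Iterating, the process ends at a reduced extremal graph $G_0$, and each reverse step attaches a pendant $2$-path at a vertex whose deletion is randomly matchable.

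Next I identify the reduced graphs. By \cref{lem:C-no-leaf}, $G_0$ has no leaves. A reduced graph with no leaves has no attached trees at all, since any nontrivial attached tree would contain a leaf. So $G_0$ is a pure core $\Dumb_\ell(p,q)$. Then \cref{lem:C-pure-odd} gives $G_0\cong\Dumb_1(3,4)=\CO_0$ or $G_0\cong\Dumb_2(4,4)=\CP_0$.

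Then I rebuild by induction on the number of attached $2$-paths. By \cref{lem:C-odd-attachment}, in $\CO_r$ (respectively $\CP_r$) the only vertex with randomly matchable deletion is the triangle connector endpoint (respectively the internal connector vertex). So the unique admissible extension of $\CO_r$ is $\CO_{r+1}$, and that of $\CP_r$ is $\CP_{r+1}$. It follows that $G\cong\CO_{(n-7)/2}$ or $G\cong\CP_{(n-9)/2}$, the latter only when $n\ge9$.

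For the converse, I check the base cases directly: every maximal matching of $\Dumb_1(3,4)$ has size $3$, and every maximal matching of $\Dumb_2(4,4)$ has size $4$. For the second, delete the internal connector vertex $x_1$ and use the cut-vertex decomposition. I then apply \cref{cor:odd-extension} repeatedly, using the randomly matchable deletions exhibited in \cref{lem:C-odd-attachment}: $C_4\cup rK_2$ for $\CO_r$ and $2C_4\cup rK_2$ for $\CP_r$. This shows that every listed graph attains $(n-1)/2$.

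The main point needing care is the reduction step. I must check that deleting a pendant $2$-path keeps the graph in $\Cclass$ with the same core, which holds because $s$ and $t$ lie off the core. I must also check that the extremal property passes down to $H$, which needs \cref{lem:P2} as used above, rather than \cref{cor:odd-extension} alone.
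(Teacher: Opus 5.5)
Your proposal is correct and follows the paper's proof essentially step for step: you reduce by deleting pendant $2$-paths via \cref{lem:odd-support,lem:farthest,lem:P2,cor:odd-extension}, identify the reduced graph as $\Dumb_1(3,4)$ or $\Dumb_2(4,4)$ via \cref{lem:C-no-leaf,lem:C-pure-odd}, rebuild with \cref{lem:C-odd-attachment}, and get the converse from \cref{cor:odd-extension}. One harmless slip: deleting the triangle's connector endpoint from $\CO_r$ leaves $C_4\cup(r+1)K_2$, not $C_4\cup rK_2$, because the remaining triangle edge is an extra $K_2$; this graph is still randomly matchable, so the argument is unaffected.
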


\begin{proof}
Under equality, repeatedly delete a degree-$2$ support vertex outside the core and its leaf. This is possible by \cref{lem:odd-support,lem:farthest}. By \cref{lem:P2,cor:odd-extension}, the smaller graph remains an odd-order extremal graph. The reduced graph has no leaf by \cref{lem:C-no-leaf}, hence is a pure core and is one of the two graphs in \cref{lem:C-pure-odd}. The admissible attachment vertices are determined by \cref{lem:C-odd-attachment}. The converse follows from \cref{cor:odd-extension}.
\end{proof}

\subsection{Distributions of the candidate families}

\begin{lemma}\label{lem:C-distributions}
For $r\ge0$,
\[
 m(\CO_r)=2r+8,
 \qquad
 m(\CP_r)=4r+12,
\]
and every maximal matching in these graphs is maximum. If $n=8+2r$, then $\CQ_r$ has four perfect matchings and $2n-12$ maximal matchings of size $n/2-1$. If $n=10+2r$, the same distribution holds for $\CR_r$. Thus
\[
 m(\CQ_r)=m(\CR_r)=2n-8
\]
and
\[
 \avm(\CQ_r)=\avm(\CR_r)=\frac n2-1+\frac2{n-4}.
\]
\end{lemma}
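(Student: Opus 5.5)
The plan is to compute the base graphs ($r=0$) by hand and then pass to general $r$ by induction with \cref{lem:P2}. Attaching a pendant $2$-path at $w$ to a graph $H$ gives
\[
m(G)=m(H)+m(H-w),\qquad m'(G)=m'(H)+m(H)+m'(H-w)+m(H-w).
\]
So at each step we must know the distribution of maximal-matching sizes of $H-w$, where $w$ is the fixed attachment vertex.

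In every family, $H-w$ is the same small graph together with $r$ copies of $K_2$:
\begin{itemize}
\item for $\CO_r$, $H-w$ is $C_4\cup rK_2$;
\item for $\CP_r$ it is $2C_4\cup rK_2$;
\item for $\CQ_r$ (with $w=a_0$) it is $P_3\cup C_4\cup rK_2$;
\item for $\CR_r$ (with $w$ the internal connector vertex) it is $2C_4\cup K_1\cup rK_2$, the isolated vertex being the leaf.
\end{itemize}
The $K_2$ components are forced into every maximal matching. Also, $C_4$ has exactly $2$ maximal matchings, both perfect, and $P_3$ has $2$ maximal matchings of size $1$. Hence $m(H-w)$ equals $2$, $4$, $4$ and $4$ respectively. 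In each case all of these matchings have one common size, which is $\nu(H-w)$.

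Next come the base counts.
\begin{itemize}
\item $\Dumb_1(3,4)$, order $7$: \cref{thm:C-odd} already gives that it is equimatchable. I would enumerate its maximal matchings by the edge covering $a_0$ and expect $8$.
\item $\Dumb_2(4,4)$: splitting on whether $x_1$ is matched to $a_0$, to $b_0$, or is left uncovered should give $12$.
\item $\Dumb_1(4,4)$, order $8$: each $C_4$ has $2$ perfect matchings, so there are $4$ perfect matchings. The remaining maximal matchings should be those using the bridge $a_0b_0$ together with one edge in each $P_3$, which gives $2\cdot2=4$ matchings of size $3$. Here $2n-12=4$ at $n=8$.
\item $\CR_0=\Dumb_2(4,4)$ plus a leaf at $x_1$: every maximal matching covers the support vertex $x_1$. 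If $x_1$ is matched to the leaf, the remainder is $2C_4$, giving $4$ perfect matchings. If $x_1$ is matched to $a_0$ or to $b_0$, the remainder is $P_3\cup C_4$, giving $2\cdot 2$ matchings of size $4$ for each choice. That makes $8=2\cdot10-12$ matchings of size $4$.
\end{itemize}

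The induction then runs as follows. For $\CO_r$ and $\CP_r$ we have $m(G)=m(H)+2$ (respectively $+4$), which gives $2r+8$ and $4r+12$. Every matching in each of the two classes of \cref{lem:P2} has size $\nu(H)+1$, so equimatchability is preserved (this is \cref{cor:odd-extension}).

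For $\CQ_r$ and $\CR_r$ the perfect matchings stay at four by \cref{lem:P2}. The new matchings from $H-w$ have size $1+\nu(H-w)$. Since $H-w$ has an uncovered vertex (from $P_3$ or the leaf), this is one less than half the new order. So each step adds $4$ matchings of size $n/2-1$. This matches $2n-12$ increasing by $4$ when $n$ increases by $2$, and all earlier ones shift up by one size through the edge $st$.

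Finally, the averages follow directly:
\[
m=4+(2n-12)=2n-8,\qquad \avm=\frac n2-1+\frac{4}{2n-8}=\frac n2-1+\frac{2}{n-4}.
\]

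The main obstacle is the base enumerations, chiefly making sure nothing is missed in $\Dumb_1(4,4)$ and $\Dumb_2(4,4)$. I would organize each by the cut vertex and use \cref{lem:cut-splice} together with the fact that every uncovered set must be independent.
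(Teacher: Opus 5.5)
Your proposal is correct and matches the paper's proof: you compute the four base graphs by direct enumeration, then apply the recurrence of \cref{lem:P2}, using that $H-w$ has exactly $2$ or $4$ maximal matchings, all of the same size. There are two harmless slips. First, $\CO_r-a_0$ is actually $C_4\cup(r+1)K_2$, since the triangle leaves an extra $K_2$; this does not change the count. Second, in $\Dumb_1(3,4)$ the vertex $a_0$ can be left uncovered, giving the two matchings $\{a_1a_2,b_0b_1,b_2b_3\}$ and $\{a_1a_2,b_0b_3,b_1b_2\}$, so a split only by the edge covering $a_0$ yields $6$ rather than $8$ unless you include that case.
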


\begin{proof}
A direct enumeration gives $m(\CO_0)=8$ and $m(\CP_0)=12$. Deleting the common attachment vertex leaves one $C_4$ in the first family and two $C_4$'s in the second, so \cref{lem:P2} adds $2$ or $4$ maximal matchings for each added pendant $2$-path.

A direct enumeration shows that $\CQ_0=\Dumb_1(4,4)$ has four perfect matchings and four maximal matchings of size $3$, whereas $\CR_0$ has four perfect matchings and eight maximal matchings of size $4$. In $\CQ_r$, deleting the common attachment vertex leaves $P_3\cup C_4\cup rK_2$, which has exactly four maximal matchings. In $\CR_r$, deleting the common attachment vertex leaves two copies of $C_4$, one isolated vertex, and $rK_2$, and again there are exactly four maximal matchings. The recurrence in \cref{lem:P2} completes the induction.
\end{proof}

\subsection{Even order}

For a maximal matching $M$ in a graph of order $2k$, define
\[
 \defi(M)=k-|M|.
\]
Let
\[
 D(G)=\sum_{M\in\Mcal(G)}\defi(M),
\qquad
 R(G)=\sum_{\substack{M\in\Mcal(G)\\M\text{ not perfect}}}(\defi(M)-1).
\]
If $p=\pc(G)$, then
\[
 D(G)=m(G)-p+R(G)
\]
and
\[
 \avm(G)=k-\frac{D(G)}{m(G)}.
\]

\begin{lemma}\label{lem:deficit-C}
For $n=2k\ge8$, the inequality
\[
 \avm(G)\le k-1+\frac{2}{n-4}
\]
is equivalent to
\[
 2m(G)+(n-4)R(G)\ge(n-4)\pc(G).
\]
\end{lemma}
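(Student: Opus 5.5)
This is a direct algebraic manipulation of the two identities recorded just before the statement:
\[
 \avm(G)=k-\frac{D(G)}{m(G)},\qquad D(G)=m(G)-\pc(G)+R(G).
\]
I would first check these identities. Since $|M|=k-\defi(M)$, averaging over $\Mcal(G)$ gives the first one. For the second, split $D(G)$ into perfect matchings, which contribute $0$, and nonperfect maximal matchings, each of which contributes $1+(\defi(M)-1)$. The number of nonperfect maximal matchings is $m(G)-\pc(G)$, so their total contribution is $m(G)-\pc(G)+R(G)$.

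Substituting, the target inequality $\avm(G)\le k-1+\frac{2}{n-4}$ becomes
\[
 1-\frac{2}{n-4}\le\frac{m(G)-\pc(G)+R(G)}{m(G)}.
\]
Multiply by the positive quantity $(n-4)m(G)$; here $n\ge8$ gives $n-4>0$, and $m(G)\ge1$. The inequality becomes
\[
 (n-6)m(G)\le(n-4)\bigl(m(G)-\pc(G)+R(G)\bigr).
\]
Rearranging yields $2m(G)+(n-4)R(G)\ge(n-4)\pc(G)$. Every step is reversible because the multiplier is positive, so the two statements are equivalent, and equality in one corresponds to equality in the other.

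There is no real obstacle; the only points needing care are the sign of $n-4$ and the fact that $m(G)>0$. I would also note explicitly that equality cases transfer through the equivalence, since that is what the subsequent extremal characterization will use.
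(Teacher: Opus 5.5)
Your proposal is correct and follows the paper's own argument: substitute $D(G)=m(G)-\pc(G)+R(G)$ into $\avm(G)=k-D(G)/m(G)$, multiply through by the positive quantity $(n-4)m(G)$, and rearrange. Your check of the two identities and your note that every step is reversible are fine additions that the paper leaves implicit.
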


\begin{proof}
Substitute $D(G)=m(G)-p+R(G)$ into $\avm(G)=k-D(G)/m(G)$ and rearrange.
\end{proof}

The general bound that a connected $2$-cyclic graph has at most four perfect matchings follows from Engbers and Erey~\cite[Lemma~5.2]{EngbersErey2023}. For the class $\Cclass$, we need the following sharper description.

\begin{lemma}\label{lem:C-pm-values}
For $G\in\Cclass_{2k}$,
\[
 \pc(G)\in\{0,1,2,4\}.
\]
Moreover, $\pc(G)=4$ if and only if both core cycles are even and
\[
 G-\bigl(V(C_p)\cup V(C_q)\bigr)
\]
has a perfect matching.
\end{lemma}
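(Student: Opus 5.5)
We classify perfect matchings of $G\in\Cclass_{2k}$ through their symmetric differences. Fix a perfect matching $M$. For any other perfect matching $M'$, the set $M\triangle M'$ is a disjoint union of $M$-alternating cycles. In $G$ the only cycles are the two core cycles $C_p$ and $C_q$, which are vertex-disjoint. Hence $M\triangle M'$ is one of $C_p$, $C_q$, or $C_p\cup C_q$, and each piece must be even and $M$-alternating.

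Conversely, if $C_p$ is even and $M$-alternating, then switching $M$ on $C_p$ gives a new perfect matching; the same holds for $C_q$. These switches commute because the cycles are disjoint. So the set of perfect matchings is $M$ together with the switches on those cycles among $C_p,C_q$ that are $M$-alternating. This gives $\pc(G)\in\{1,2,4\}$ when a perfect matching exists, and $\pc(G)=0$ otherwise. In particular $\pc(G)=3$ never occurs.

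Next we characterize $\pc(G)=4$. This happens exactly when some (equivalently every) perfect matching $M$ makes both $C_p$ and $C_q$ alternating. A cycle is $M$-alternating iff $M$ restricted to its vertex set is a perfect matching of that cycle, i.e.\ no cycle vertex is matched to a vertex outside the cycle.

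For the forward direction, assume both cycles are alternating. Then both cycles are even, and $M$ restricts to a perfect matching of $G-(V(C_p)\cup V(C_q))$.

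For the converse, assume both cycles are even and $G-(V(C_p)\cup V(C_q))$ has a perfect matching $N$. Choose either perfect matching on each cycle and add them to $N$. This gives a perfect matching of $G$ in which both cycles are alternating, and switching on each cycle independently yields four perfect matchings. The count cannot exceed four by the first step.

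The only point needing care is the claim that $M\triangle M'$ is a union of whole core cycles. This relies on the fact that every cycle of $G$ lies in the core, and the only cycles of $\Dumb_\ell(p,q)$ are $C_p$ and $C_q$. The connector path and the attached trees carry no cycles, since $G$ is bicyclic with disjoint cycles. Once this is established, the rest is bookkeeping, so I expect no serious obstacle.
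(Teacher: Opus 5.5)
Your proof is correct, but it takes a different route from the paper. The paper argues edge by edge. Every edge off the two core cycles is a bridge, and whether a bridge lies in a perfect matching is forced by the common parity of the two sides of $G-e$. Hence all perfect matchings agree off the cycles, and each cycle then independently contributes $0$, $1$, or $2$ completions; it contributes $2$ exactly when it is even and no forced bridge edge touches it. You instead fix one perfect matching $M$ and use that $M\triangle M'$ is a disjoint union of $M$-alternating cycles, which in $G$ can only be $C_p$, $C_q$, or both. This gives $\pc(G)=2^{a}$, where $a\in\{0,1,2\}$ is the number of $M$-alternating core cycles. The characterization of $\pc(G)=4$ then follows because a core cycle is chordless, so it is $M$-alternating exactly when none of its vertices is matched outside it. Your argument is the same symmetric-difference technique the paper itself uses for $\Aclass$ and $\Bclass$ in \cref{lem:A-three-pm,lem:B-two-pm}, so it treats the three classes uniformly and makes the independence of the two cycle choices transparent. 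It needs only that $C_p$ and $C_q$ are the only cycles of $G$, which you could state crisply: the cycle space has dimension $|E|-|V|+1=2$ and is spanned by $E(C_p)$ and $E(C_q)$. The paper's bridge-parity route instead identifies the edges common to all perfect matchings directly from parities, without first fixing a reference perfect matching.
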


\begin{proof}
For every bridge, whether it belongs to a perfect matching is determined by the parities of the two components obtained after deleting it. After these bridge choices are fixed, the forest outside the two core cycles has at most one compatible matching, while each core cycle contributes either one forced choice or two alternating choices. Hence $\pc(G)\in\{0,1,2,4\}$. Four perfect matchings occur exactly when both cycles are even and all vertices outside the cycles can be matched without using an edge incident with a cycle vertex; this is equivalent to the stated perfect-matching condition.
\end{proof}

\begin{proposition}\label{prop:C-less-four}
If $G\in\Cclass_{2k}$ has at most two perfect matchings, then
\[
 \avm(G)<k-1+\frac2{n-4}.
\]
\end{proposition}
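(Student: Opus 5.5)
We split according to $p=\pc(G)\in\{0,1,2\}$, which is all that remains by \cref{lem:C-pm-values}, and in each case we aim for $\avm(G)\le k-1+p/m(G)$ together with a lower bound on $m(G)$. Here $n=2k\ge8$ throughout, since otherwise the right-hand side $2/(n-4)$ is not the relevant comparison; the case $n=6$ is handled separately elsewhere. The inequality $\avm(G)\le k-1+p/m(G)$ holds because every nonperfect maximal matching has size at most $k-1$, so $m'(G)\le (k-1)m(G)+p$. The target $2/(n-4)=1/(k-2)$ is therefore reached as soon as $p/m(G)<1/(k-2)$, that is, as soon as $m(G)>p(k-2)$.

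If $p=0$, then $\avm(G)\le k-1$, which is strictly below the bound.

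If $p=1$, the unique perfect matching $M$ has $k+1$ nonmatching edges. Every switch $M_e$ is maximal, because by \cref{lem:switch} a nonmaximal switch would complete to a second perfect matching. Distinct edges $e$ give distinct switches, since $M_e$ determines $e$ as its unique non-$M$ edge. Hence $m(G)\ge k+2>k-2$, and the inequality is strict.

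If $p=2$, the two perfect matchings $M,M'$ differ on exactly one alternating core cycle of length $2r$, because the core cycles are the only cycles of $G$. We then count the nonperfect maximal matchings exactly as in \cref{lem:B-two-count}, using \cref{lem:switch-distance}.
\begin{itemize}
\item For $r\ge4$ the two switching families are disjoint, which gives $2k+2$ nonperfect maximal matchings.
\item For $r=3$ at most three coincide, which gives at least $2k-1$.
\item For $r=2$, each family loses only the two switches that complete to the other perfect matching. These are the switches along the two $M'$-edges of the $4$-cycle. Every other switch is maximal, since otherwise a third perfect matching would exist, contradicting $p=2$. This leaves $k-1$ maximal switches per family. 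The alternating $4$-cycle is a core cycle of $G$, and in class $\Cclass$ it is chordless, so \cref{lem:switch-distance} makes the two families disjoint. This gives $2k-2$ nonperfect maximal matchings.
\end{itemize}
In every case $m(G)\ge 2+(2k-2)=2k$. Therefore $\avm(G)\le k-1+2/(2k)=k-1+1/k<k-1+1/(k-2)$.

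The only delicate point is the $r=2$ count. We must confirm that no switch other than the two completing ones fails to be maximal. This follows from \cref{lem:switch}: a switch that is not maximal produces a further perfect matching, and since every switch from $M$ or $M'$ other than the completing ones would produce a third perfect matching, this cannot happen when $p=2$. With that confirmed, the strict inequality $1/k<1/(k-2)$ finishes the proof.
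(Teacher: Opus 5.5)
Your proof is correct and is essentially the paper's argument: the bound $\avm(G)\le k-1+p/m(G)$ with target $m(G)>p(k-2)$ is exactly \cref{lem:deficit-C} with $R(G)\ge0$ discarded. The switching counts for $p=1,2$ are the same as the paper's; in the $4$-cycle case you use chordlessness of the core cycle to get $2k-2$ nonperfect maximal matchings instead of the paper's $2k-3$, and either bound suffices. The one justification to add is why the symmetric difference is a single cycle rather than both core cycles: if it contained both, reversing only one of them would give a third perfect matching.
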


\begin{proof}
With no perfect matching, $\avm(G)\le k-1$. With one perfect matching, the $k+1$ nonmatching edges all give distinct maximal switches, so $m(G)\ge k+2$. Hence
\[
 2m(G)\ge2k+4>2k-4=n-4=(n-4)\pc(G),
\]
and \cref{lem:deficit-C} is strict.

Now suppose that $G$ has exactly two perfect matchings $M,M'$. Their symmetric difference is a single alternating cycle: if it had two components, reversing either component separately would produce further perfect matchings. If this cycle has length $4$, the only nonmaximal switches are the two that complete to the other perfect matching. Hence each switching family has $k-1$ effective members, and \cref{lem:switch-distance} shows that the two families have at most one common member. Thus they produce at least $2k-3=n-3$ distinct nonperfect maximal matchings. If the cycle has length $6$, all $2(k+1)$ switches are maximal and at most three overlap, so there are at least $2k-1$ distinct nonperfect maximal matchings. For a longer cycle the two switching families are disjoint and give still more. In every case
\[
 m(G)\ge2+(n-3)=n-1>n-4.
\]
Therefore
\[
 2m(G)>2(n-4)=(n-4)\pc(G),
\]
and \cref{lem:deficit-C} again gives strict inequality.
\end{proof}

Assume now that $G$ has four perfect matchings, denoted $M_{00},M_{10},M_{01},M_{11}$ according to the independent choices on the two even cycles.

\begin{lemma}\label{lem:C-four-switch}
If $G\in\Cclass_{2k}$ has four perfect matchings, then
\[
 m(G)\ge4k-8=2n-8.
\]
Moreover, if at least one core cycle is not a $C_4$, then $m(G)>4k-8$.
\end{lemma}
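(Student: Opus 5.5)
We count switching families of the four perfect matchings. By \cref{lem:C-pm-values}, both core cycles $C_p,C_q$ are even, and the four perfect matchings $M_{ij}$ agree off the two cycles. Write $A_0,A_1$ for the two perfect matchings of $C_p$ and $B_0,B_1$ for those of $C_q$, so that $M_{ij}=F\cup A_i\cup B_j$ with $F$ the common part. Each $M_{ij}$ has $k+1$ nonmatching edges, which gives $4(k+1)$ ordered switches. We subtract the nonmaximal switches and the coincidences between families.

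\textbf{Nonmaximal switches.} By \cref{lem:switch}, a switch along $e=xy$ fails to be maximal exactly when $x'y'\in E(G)$. In that case $M_e\cup\{x'y'\}$ is another perfect matching at symmetric difference $4$ from $M_{ij}$, so this alternating $4$-cycle must be a whole core cycle. Consequently $M_{ij}$ has two nonmaximal switches for each core cycle that is a $C_4$, namely the two chords-to-be $e,x'y'$ given by the cycle edges not in $M_{ij}$. In the worst case $p=q=4$ this removes $4$ switches per matching, leaving $4(k+1)-16=4k-12$ maximal switches.

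\textbf{Coincidences.} We then apply \cref{lem:switch-distance} pair by pair. Pairs such as $M_{00},M_{10}$ differ on one core cycle. If that cycle is a chordless $C_4$, their families are disjoint; if it is a $C_6$, they share at most $3$; if it is longer, they share none. Diagonal pairs such as $M_{00},M_{11}$ differ on two disjoint cycles, so $|M\triangle M'|\ge8$ and their families are disjoint. When $p=q=4$ this gives $4k-12$ distinct nonperfect maximal matchings, hence $m(G)\ge 4+4k-12=4k-8$.

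\textbf{At least one cycle not a $C_4$.} Say $p\ge6$. That cycle then contributes no nonmaximal switches, so each matching loses at most $2$ switches instead of $4$, for a total loss of at most $8$. The pairs differing on $C_p$ can overlap in at most $3$ each when $p=6$, and there are two such pairs, for an overlap of at most $6$. This gives $m(G)\ge 4+4(k+1)-8-6=4k-6>4k-8$. If both cycles have length at least $6$, the count is larger still.

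\textbf{Main obstacle.} The delicate step is bounding the overlaps in the $C_6$ case. A direct appeal to \cref{lem:switch-distance}(i) suffices here, but we must check that a switch counted as nonmaximal is never also counted as a coincidence. Since coincident members are maximal by definition, the two subtractions are disjoint and simply add.
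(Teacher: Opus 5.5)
Your proof is correct and follows the paper's argument: you count the $4(k+1)$ switches, remove the two nonmaximal switches per core $C_4$ per perfect matching, and bound coincidences pairwise via \cref{lem:switch-distance}, which yields $4k-8$ when $p=q=4$ and at least $4k-6$ otherwise. Your extra justification fills in a step the paper only asserts: a nonmaximal switch completes to one of the four $M_{ij}$ at symmetric difference $4$, and that $4$-cycle must therefore be a core $C_4$.
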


\begin{proof}
Each perfect matching has $k+1$ nonmatching edges. A $C_4$ causes exactly two nonmaximal switches for each perfect matching; a longer even cycle causes none. Opposite perfect matchings differ on both cycles, so their switching families are disjoint. Adjacent families differing on a core $4$-cycle are disjoint because that cycle is chordless; adjacent families can otherwise overlap only when the relevant cycle has length $6$, and then \cref{lem:switch-distance} bounds the overlap by three.

If both cycles are $C_4$, each switching family has $k-3$ members and the four families are disjoint, so $m(G)\ge4+(4k-12)=4k-8$. If exactly one cycle is $C_4$, then $m(G)\ge4+(4k-10)=4k-6$. If neither cycle is a $C_4$, then $m(G)\ge4+(4k-8)=4k-4$. This proves both assertions.
\end{proof}

\begin{corollary}\label{cor:C-even-pre}
If $G\in\Cclass_{2k}$ has four perfect matchings, then
\[
 \avm(G)\le k-1+\frac2{n-4}.
\]
Equality implies
\[
 R(G)=0,
 \qquad
 m(G)=4k-8,
\]
and both core cycles are $C_4$.
\end{corollary}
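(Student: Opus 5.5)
We combine \cref{lem:deficit-C} with the counting bound in \cref{lem:C-four-switch}. Since $\pc(G)=4$, \cref{lem:deficit-C} says that the desired inequality $\avm(G)\le k-1+2/(n-4)$ is equivalent to
\[
 2m(G)+(n-4)R(G)\ge 4(n-4).
\]
By \cref{lem:C-four-switch}, $m(G)\ge 2n-8=2(n-4)$, so $2m(G)\ge 4(n-4)$. Moreover $R(G)\ge0$ by definition, because every nonperfect maximal matching has deficit at least $1$. Adding these two inequalities gives the displayed inequality, and hence the bound. The hypothesis $n\ge8$ used in \cref{lem:deficit-C} is automatic here. Indeed, four perfect matchings force both core cycles to be even, so $p,q\ge4$ and $n\ge8$.

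For the equality case, suppose $\avm(G)=k-1+2/(n-4)$. Then the displayed inequality holds with equality. Since $n-4>0$, both terms $2m(G)-4(n-4)\ge0$ and $(n-4)R(G)\ge0$ must vanish. This gives $R(G)=0$ and $m(G)=2n-8=4k-8$. Finally, the second assertion of \cref{lem:C-four-switch} says that $m(G)>4k-8$ whenever some core cycle is not a $C_4$. Hence equality forces both core cycles to be $C_4$.

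No real obstacle arises. The only point to check is that the rearrangement in \cref{lem:deficit-C} is a genuine equivalence, which holds because $m(G)>0$ and $n-4>0$. That equivalence lets equality in the average transfer exactly to equality in the linear inequality, and from there to the two separate nonnegative terms.
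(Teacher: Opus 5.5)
Your proposal is correct and follows essentially the same route as the paper: combine \cref{lem:deficit-C} with $R(G)\ge0$ and the bound $m(G)\ge4k-8$ from \cref{lem:C-four-switch}, then read off $R(G)=0$, $m(G)=4k-8$, and the two $C_4$'s from the equality case and the strict part of that lemma. You also make explicit two points the paper leaves implicit: that $n\ge8$ holds automatically because both cycles are even and hence have length at least $4$, and that equality in the average transfers exactly to equality in the linear inequality.
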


\begin{proof}
By \cref{lem:C-four-switch}, $m(G)\ge4k-8$. Since $n=2k$ and $R(G)\ge0$,
\[
 2m(G)+(n-4)R(G)\ge2(4k-8)=4(n-4),
\]
so the bound follows from \cref{lem:deficit-C}. Equality requires $m(G)=4k-8$ and $R(G)=0$; the strict part of \cref{lem:C-four-switch} then forces both core cycles to be $C_4$.
\end{proof}

We now determine the equality cases. Assume that both core cycles are $C_4$. Define
\[
 X=E(G)\setminus\bigcup_{i,j\in\{0,1\}}M_{ij}.
\]

\begin{lemma}\label{lem:C-X-size}
We have $|X|=k-3$. For every perfect matching $M_{ij}$ and every $e\in X$, the switched matching $(M_{ij})_e$ is maximal. The resulting $4(k-3)$ matchings are pairwise distinct.
\end{lemma}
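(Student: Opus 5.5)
The statement has three parts: a count, a maximality claim, and a distinctness claim. Throughout, both core cycles are $C_4$ and $G$ has the four perfect matchings $M_{00},M_{10},M_{01},M_{11}$.

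\emph{Counting $X$.} By \cref{lem:C-pm-values}, all four perfect matchings agree outside the two core cycles. Let $F$ be their common part, i.e.\ the perfect matching of $G-(V(C_p)\cup V(C_q))$. Then $|F|=k-4$, since each $C_4$ absorbs two matching edges.

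The union of the four perfect matchings is therefore $F$ together with all eight cycle edges, which is $k+4$ edges. Since $|E(G)|=2k+1$, we get $|X|=2k+1-(k+4)=k-3$. In particular, every edge of $X$ is a non-cycle edge, and no edge of $X$ lies in $F$.

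\emph{Maximality.} Fix $M=M_{ij}$ and $e=xy\in X$, with $xx',yy'\in M$. By \cref{lem:switch}, $M_e$ is maximal unless $x'y'\in E(G)$, and in that case $M_e\cup\{x'y'\}$ is a perfect matching.

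This perfect matching contains $e$. But $e\in X$ lies in none of the four perfect matchings, and there are no others. So $x'y'\notin E(G)$ and $M_e$ is maximal.

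\emph{Distinctness.} Two switches $(M_{ij})_e$ and $(M_{ij})_f$ from the same perfect matching with $e\ne f$ are different, because each contains exactly one edge of $X$, namely $e$ or $f$. So it remains to compare switches from different perfect matchings.

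Suppose $(M_{ij})_e=(M_{i'j'})_f$ with $(i,j)\ne(i',j')$. By \cref{lem:switch-distance}, $|M_{ij}\triangle M_{i'j'}|\le6$. Opposite perfect matchings differ on both $4$-cycles, giving a symmetric difference of size $8$, so they are excluded. For adjacent perfect matchings, the symmetric difference is a single core $4$-cycle. This cycle is chordless, since a chord would create a third cycle in a bicyclic graph. Part (ii) of \cref{lem:switch-distance} then says the two switching families are disjoint.

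The main obstacle is making sure \cref{lem:switch-distance}(ii) applies verbatim here. If needed, I would argue it directly. A common member $Q$ contains exactly one edge of $X$, so $e=f$. Then $Q$ is obtained from $M_{ij}$ by deleting the two edges at the endpoints of $e$, and likewise from $M_{i'j'}$. These two switches agree only if $M_{ij}$ and $M_{i'j'}$ agree after removing the edges covering the endpoints of $e$. Since they differ on an entire $4$-cycle (two edges each), and removing the edges at the two endpoints of $e$ can discard at most two edges from each, the two ends of $e$ would have to be exactly two vertices of that cycle. Then $e$ would be a chord of the $4$-cycle, which is impossible.

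Hence the $4(k-3)$ switched matchings are pairwise distinct.
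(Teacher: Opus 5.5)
Your proposal is correct and follows the paper's proof: you use the same count $|X|=(2k+1)-(k+4)=k-3$, the same argument that a nonmaximal switch would complete to a perfect matching containing an edge of $X$, and the same appeal to \cref{lem:switch-distance} (opposite matchings differ in eight edges, adjacent ones on a chordless $C_4$). Your explicit check that switches from one perfect matching are distinct, and your direct chord argument, only spell out details the paper leaves implicit.
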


\begin{proof}
The union of the perfect matchings contains all eight cycle edges and a matching of size $k-4$ that is common to all four perfect matchings outside the two cycles. Hence its size is $k+4$, and $|X|=(2k+1)-(k+4)=k-3$. A nonmaximal switch using $e\in X$ would complete to a perfect matching containing $e$, impossible. The disjointness of the four switching families is the two-$C_4$ case of \cref{lem:C-four-switch}: opposite perfect matchings differ in eight edges, and adjacent ones differ on a chordless $C_4$.
\end{proof}

\begin{lemma}\label{lem:C-X-star}
If $m(G)=4k-8$, then the edges of $X$ are pairwise intersecting and have a common endpoint.
\end{lemma}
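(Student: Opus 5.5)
Assume $m(G)=4k-8$. By \cref{lem:C-X-size}, the four perfect matchings and the $4(k-3)$ single switches already account for exactly $4+4(k-3)=4k-8$ maximal matchings. Hence these are \emph{all} the maximal matchings of $G$. The plan is to show that a violation of the conclusion produces an additional maximal matching, mirroring \cref{lem:B-X}.

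\emph{Pairwise intersecting.} Suppose $e,f\in X$ are disjoint, and extend $\{e,f\}$ to a maximal matching $N$. Every perfect matching avoids $X$. A single switch $(M_{ij})_g$ contains exactly one edge of $X$, namely $g$. So $N$ has at least two $X$-edges and is neither a perfect matching nor a switch, giving $m(G)>4k-8$, a contradiction. Thus every two edges of $X$ share an endpoint.

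\emph{Common endpoint.} A pairwise intersecting family of edges in a simple graph is either a star or a triangle. So it remains to exclude the case $X=\{xy,yz,xz\}$ forming a triangle that is not a star. When $|X|\le 2$ the common endpoint is automatic, so we may assume $k=6$. The triangle would be a cycle of $G$ disjoint from the edge sets of the $M_{ij}$. But the only cycles of a graph in $\Cclass$ are the two core cycles, which are $C_4$'s by \cref{cor:C-even-pre}. A triangle therefore cannot occur, and $X$ is a star.

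The only step needing care is the first one: we must confirm that the extension $N$ is genuinely new, which holds because each switch contains exactly one edge of $X$. The triangle exclusion is immediate from the bicyclic structure.
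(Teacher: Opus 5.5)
Your proposal is correct and follows essentially the same route as the paper. You use the same counting argument to force $X$ to be pairwise intersecting, and you rule out the triangle case by acyclicity. The paper phrases that last step as ``$X$ lies in the forest obtained by deleting the cycle edges,'' whereas you observe that the only cycles of $G$ are the two core $C_4$'s; this is the same point.
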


\begin{proof}
The four perfect matchings and the $4(k-3)$ switches in \cref{lem:C-X-size} already account for $4k-8$ maximal matchings. If $e,f\in X$ are disjoint, extend $\{e,f\}$ to a maximal matching. It contains at least two edges of $X$, so it is not among the counted matchings. Thus $X$ is pairwise intersecting.

All cycle edges lie in the union of the perfect matchings, so $X$ lies in the forest obtained by deleting the two cycle edge sets. If $|X|\le1$, the conclusion is immediate. Otherwise, choose two edges $uv,uw\in X$. Any further edge meeting both but not containing $u$ would be $vw$, which would create a triangle in the forest. Thus every edge of $X$ is incident with $u$.
\end{proof}

\begin{proposition}\label{prop:C-four-equality}
Suppose $G\in\Cclass_{2k}$ has four perfect matchings and $R(G)=0$. Then $m(G)\ge4k-8$, with equality if and only if
\[
 G\cong\CQ_{k-4}
 \quad\text{or}\quad
 G\cong\CR_{k-5}.
\]
\end{proposition}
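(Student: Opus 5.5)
The inequality $m(G)\ge4k-8$ is exactly \cref{lem:C-four-switch}, so only the equality characterization needs work. I will work under equality, $m(G)=4k-8$. By the strict part of \cref{lem:C-four-switch}, both core cycles are $C_4$. By \cref{lem:C-X-size}, the four perfect matchings together with the $4(k-3)$ switches along $X$ are already all the maximal matchings of $G$. By \cref{lem:C-X-star}, $X$ is a star with some center $c$. Write $F$ for the common part of the four perfect matchings outside the cycles, so $|F|=k-4$. Every edge of $G$ is then one of three kinds: a cycle edge, an edge of $F$, or an edge of $X$.

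The structural step comes first. The connector edges are never cycle edges. They are also not in $F$ when they touch a cycle vertex, because all four perfect matchings cover cycle vertices by cycle edges. Hence the connector edges $a_0x_1$ and $x_{\ell-1}b_0$ lie in $X$. If $\ell\ge3$, these two edges are disjoint, contradicting \cref{lem:C-X-star}. So $\ell\in\{1,2\}$, and the center $c$ is determined:
\begin{itemize}
\item if $\ell=1$, then $X$ contains $a_0b_0$ and $c\in\{a_0,b_0\}$, say $c=a_0$;
\item if $\ell=2$, then $c=x_1$.
\end{itemize}
No cycle vertex other than the connector endpoints carries an extra edge, because such an edge would lie in $X$ and miss $c$.

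Next I describe the trees hanging off $c$, and off $a_0$, $b_0$ when they are not the center. Every non-core edge is in $F\cup X$, so every path leaving the core alternates between $X$-edges at $c$ and $F$-edges. An $F$-edge cannot continue beyond its far end, since the next edge would be in $X$ but not at $c$. So each component of $G-c$ outside the core is either a single $F$-edge $st$ with $cs\in X$, which is a pendant $2$-path at $c$, or a single vertex joined to $c$ by an edge of $F$. The second option is a leaf of $c$ with $c$ matched to it in $F$. It can happen only if $c$ is not a cycle vertex, i.e.\ $\ell=2$, and then at most once since $F$ is a matching. When $\ell=2$, $x_1$ must be covered by $F$ in a perfect matching: its connector edges are in $X$, so the leaf is forced. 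Also $b_0$ (or $a_0$ in the other orientation) has no extra edges.

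Putting this together gives $G\cong\CQ_{k-4}$ when $\ell=1$ and $G\cong\CR_{k-5}$ when $\ell=2$. The order count $n=8+2r$ or $n=10+2r$ fixes $r$. The converse, that these graphs attain $m=4k-8$ with $R=0$, is \cref{lem:C-distributions}.

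The main obstacle is the bookkeeping of the second step: showing that an edge in neither $F$ nor a cycle is in $X$ and so must touch $c$, and ruling out trees of depth greater than two. I would handle this with a short case check on the first two edges of each pendant tree.
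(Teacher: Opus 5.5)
Your proposal is correct and follows essentially the same route as the paper: equality forces both cycles to be $C_4$; the star structure of $X$ from \cref{lem:C-X-star}, together with the two outer connector edges lying in $X$, forces $\ell\le2$ and fixes the center; and the alternation between $X$-edges at the center and $F$-edges shows that every attached tree is a pendant $2$-path, plus the forced leaf at $x_1$ when $\ell=2$, with the converse taken from \cref{lem:C-distributions}. Your explicit remark that cycle vertices other than the center carry no extra edges, since such an edge would lie in $X$ but cannot reach the center without creating a third cycle, spells out a step the paper leaves implicit.
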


\begin{proof}
Assume equality. By \cref{cor:C-even-pre}, both cycles are $C_4$. The first and last connector edges belong to $X$, because their cycle endpoints are matched inside the cycles in all four perfect matchings. If $\ell\ge3$, these two edges are disjoint, contradicting \cref{lem:C-X-star}. Thus $\ell\le2$.

If $\ell=1$, the connector edge $a_0b_0$ belongs to $X$. The common center of the star $X$ is one endpoint, say $a_0$. Every tree attached outside the core starts with an edge not used by any perfect matching and hence with an edge of $X$, so all such trees attach at $a_0$. Let $a_0z\in X$ be the first edge of one of these trees. The vertex $z$ is covered in every perfect matching by the same edge $zz'$. Neither $z$ nor $z'$ has any additional neighbor in the attached tree, since the corresponding edge would belong to $X$ without being incident with $a_0$. Thus every attached tree is a pendant $2$-path, and $G\cong\CQ_{k-4}$.

If $\ell=2$, both $a_0x_1$ and $x_1b_0$ lie in $X$, so the common center is $x_1$. The vertex $x_1$ must be covered in every perfect matching by a common edge $x_1y$; the star property of $X$ then forces $y$ to be a leaf. Every other tree attached outside the core is a pendant $2$-path at $x_1$. Hence $G\cong\CR_{k-5}$.

Conversely, \cref{lem:C-distributions} verifies $m=4k-8$ and $R=0$ for both families.
\end{proof}

\begin{theorem}\label{thm:C-even}
Let $G\in\Cclass_n$, where $n\ge8$ is even. Then
\[
 \avm(G)\le\frac n2-1+\frac2{n-4}.
\]
Equality holds if and only if $G\cong\CQ_{(n-8)/2}$ or, for $n\ge10$, $G\cong\CR_{(n-10)/2}$. These two extremal families are shown in Figure~\ref{fig:C-extremal}(\ref{fig:C-Q}) and Figure~\ref{fig:C-extremal}(\ref{fig:C-R}).
\end{theorem}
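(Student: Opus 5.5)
The plan is to assemble \cref{thm:C-even} from the results already established for the class $\Cclass$, splitting according to the number of perfect matchings. By \cref{lem:C-pm-values}, $\pc(G)\in\{0,1,2,4\}$. The case $\pc(G)\le2$ is settled by \cref{prop:C-less-four}, which gives strict inequality, so no graph with at most two perfect matchings can attain the bound. When $\pc(G)=4$, \cref{cor:C-even-pre} gives the inequality $\avm(G)\le k-1+2/(n-4)$ with $n=2k$. This proves the bound for every $G\in\Cclass_n$ with $n\ge8$ even.

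For the equality characterization, suppose $\avm(G)=k-1+2/(n-4)$. By the previous paragraph $\pc(G)=4$. Then \cref{cor:C-even-pre} forces $R(G)=0$, $m(G)=4k-8$, and both core cycles to be $C_4$. These are exactly the hypotheses of \cref{prop:C-four-equality}. Its equality case yields $G\cong\CQ_{k-4}$ or $G\cong\CR_{k-5}$, that is, $\CQ_{(n-8)/2}$ or $\CR_{(n-10)/2}$. The latter family exists only when $n\ge10$, since $\CR_r$ has order $10+2r$.

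Conversely, \cref{lem:C-distributions} computes $\avm(\CQ_r)=\avm(\CR_r)=n/2-1+2/(n-4)$ directly, so both families attain the bound.

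The only delicate point is the logical flow in the equality case. We must check that $\avm$ equal to the bound really forces $m(G)=4k-8$ and $R(G)=0$ simultaneously. This follows from \cref{lem:deficit-C}: $2m(G)+(n-4)R(G)\ge(n-4)\cdot4$ must hold with equality, while $m(G)\ge4k-8$ and $R(G)\ge0$. Once this is confirmed, the theorem is a direct combination of the cited results.
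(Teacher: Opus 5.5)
Your proposal is correct and follows the paper's proof exactly. You split by $\pc(G)\in\{0,1,2,4\}$ using \cref{lem:C-pm-values}, dispose of $\pc(G)\le2$ strictly via \cref{prop:C-less-four}, and settle $\pc(G)=4$ through \cref{cor:C-even-pre} and \cref{prop:C-four-equality}; your check that equality in \cref{lem:deficit-C} forces both $m(G)=4k-8$ and $R(G)=0$, and your use of \cref{lem:C-distributions} for the converse, only make explicit what the paper leaves implicit.
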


\begin{proof}
By \cref{lem:C-pm-values}, the number of perfect matchings is $0,1,2$, or $4$. The first three cases are strict by \cref{prop:C-less-four}. The four-perfect-matching case follows from \cref{cor:C-even-pre,prop:C-four-equality}.
\end{proof}

\begin{lemma}\label{lem:C-six}
The family $\Cclass_6$ contains only $\Dumb_1(3,3)$, shown in Figure~\ref{fig:C-extremal}(\ref{fig:C-D133}). It has eight maximal matchings of size $2$ and one maximal matching of size $3$. Hence
\[
 m=9,
 \qquad
 m'=19,
 \qquad
 \avm=\frac{19}{9}.
\]
\end{lemma}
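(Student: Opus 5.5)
The plan has two parts: first pin down the structure of the unique graph in $\Cclass_6$, then enumerate its maximal matchings by hand using the criterion that a matching is maximal if and only if its uncovered vertices form an independent set.

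\emph{Structure.} A graph in $\Cclass_6$ has a core $\Dumb_\ell(p,q)$ with two vertex-disjoint cycles, so $p+q\le 6$ with $p,q\ge3$. This forces $p=q=3$, and the two cycles already use all six vertices. Hence the connector has no internal vertices, so $\ell=1$, and no trees are attached. The edge count $3+3+1=7=n+1$ confirms that $G\cong\Dumb_1(3,3)$. Label the triangles $a_0a_1a_2$ and $b_0b_1b_2$, with connector $a_0b_0$.

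\emph{Enumeration.} A matching contains at most one edge of each triangle, and I split according to whether it uses $a_0b_0$.
\begin{enumerate}[label=\textup{(\alph*)}]
\item If $a_0b_0$ is used, the vertices $a_1,a_2$ are adjacent and so cannot both stay uncovered. This forces $a_1a_2$, and symmetrically $b_1b_2$. The result is the unique perfect matching $\{a_0b_0,a_1a_2,b_1b_2\}$, of size $3$.
\item If $a_0b_0$ is not used, a maximal matching must take exactly one edge from each triangle; otherwise a whole triangle, which contains an edge, would remain uncovered. That gives $3\cdot3=9$ candidate matchings of size $2$. Each leaves one uncovered vertex in each triangle, and the pair is independent unless it is $\{a_0,b_0\}$. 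That bad case occurs only for $\{a_1a_2,b_1b_2\}$, leaving $8$ maximal matchings of size $2$.
\end{enumerate}

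\emph{Totals.} We get $m=9$ and $m'=8\cdot2+3=19$, so $\avm=19/9$.

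There is no real obstacle here. The only points needing care are the exclusion of the single nonmaximal pair in case (b) and the observation that case (a) yields nothing beyond the perfect matching.
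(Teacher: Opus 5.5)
Your proposal is correct and follows essentially the same route as the paper: $p,q\ge3$ and $n=6$ force the core to be $\Dumb_1(3,3)$, and the maximal matchings are then enumerated directly. Your case split on whether the connector edge $a_0b_0$ is used spells out the count of one perfect matching and eight size-$2$ maximal matchings, which the paper only states.
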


\begin{proof}
Both cycles have length at least $3$ and the connector has length at least $1$, so the core is forced. The unique size-$3$ maximal matching uses the connector edge and one edge opposite the connector endpoint in each triangle. The other eight maximal matchings have size $2$.
\end{proof}

\section{Proof of the main theorem}\label{sec:global}

\begin{proof}[Proof of \cref{thm:global}]
For odd $n$, every matching has size at most $(n-1)/2$, and the complete equality list is the union of the three classwise lists.

Let $n=6$. The classwise maxima are
\[
 U_A(6)=\frac{13}{5},
 \qquad
 U_B(6)=\frac73,
 \qquad
 U_C(6)=\frac{19}{9}.
\]
Thus $U_A(6)$ is strictly largest, and \cref{thm:A-even} gives the unique extremal graph $\Th(1,3,3)$.

Now let $n\ge8$ be even. Set
\[
 U_A(n)=\frac n2-1+\frac{6}{3n-8},
 \quad
 U_B(n)=\frac n2-1+\frac2n,
 \quad
 U_C(n)=\frac n2-1+\frac2{n-4}.
\]
Then
\[
 U_C(n)-U_A(n)=\frac{8}{(n-4)(3n-8)}>0
\]
and
\[
 U_A(n)-U_B(n)=\frac{16}{n(3n-8)}>0.
\]
Hence $U_C(n)>U_A(n)>U_B(n)$, so the global extremal graphs are exactly the two $\Cclass$ families in \cref{thm:C-even}.
\end{proof}

\end{document}